\newtheorem{thm}{Theorem}[section]
\newtheorem{prop}[thm]{Proposition}
\newtheorem{lem}[thm]{Lemma}
\newtheorem{cor}[thm]{Corollary}
\theoremstyle{definition}
\newtheorem{definition}[thm]{Definition}
\newtheorem{claim}[thm]{Claim}
\theoremstyle{remark}
\numberwithin{equation}{section}
\newcommand{\R}{\mathbb{R}}  
\newcommand{\Q}{\mathbb{Q}}  
\newcommand{\str}{\textrm{str}} 
\newcommand{\rev}{\textrm{rev}} 
\newcommand{\Z}{\mathbb{Z}}
\newcommand{\N}{\mathbb{N}}
\newcommand{\K}{\mathcal{K}}
\newcommand{\bK}{\mathbb{K}}
\newcommand{\bL}{\mathbb{L}}
\newcommand{\bM}{\mathbb{M}}
\newcommand{\norm}[1]{\left\lvert#1\right\rvert}
\newcommand{\Homeo}{\textrm{Homeo}}  
\newcommand{\Epi}{\textrm{Hom}}  
\newcommand{\aut}{\textrm{Aut}}
\newcommand{\dsup}{d_{\textrm{sup}}}
\newcommand{\fl}[1]{\left\langle#1\right\rangle}
\newcommand\blfootnote[1]{%
  \begingroup
  \renewcommand\thefootnote{}\footnote{#1}%
  \addtocounter{footnote}{-1}%
  \endgroup
}
\begin{document}

\title[homeomorphism group of universal Knaster continuum]{The homeomorphism group of the universal Knaster continuum}

\author{Sumun Iyer}
\address{Cornell University}
\email{ssi22@cornell.edu}
\blfootnote{Research supported by NSF GRFP grant DGE – 2139899 and NSF grant DMS-1954069}

\begin{abstract}
This paper has two aims: the first is to define a projective Fraissé family whose limit approximates the universal Knaster continuum. The family is such that the group $\aut(\mathbb{K})$ of automorphisms of the Fraissé limit is a dense subgroup of the group, $\Homeo(K)$, of homeomorphisms of the universal Knaster continuum.

The second aim is to compute the universal minimal flows of $\aut(\mathbb{K})$ and $\Homeo(K)$. We prove that both have universal minimal flow homeomorphic to the universal minimal flow of the free abelian group on countably many generators. The computation involves proving that both groups contain an open, normal subgroup which is extremely amenable.
\end{abstract}

\maketitle

\bigskip

\noindent 2010 {\it Mathematics Subject Classification}:  05D10, 03C98, 37B05, 54F15, 03E15

\noindent \emph{Keywords: universal minimal flows, projective Fraissé limits, Ramsey theorem, continua}

\section{Introduction}
Knaster continua are a well-studied class of indecomposable continua (precise definitions are in Section 2). We focus here on the universal Knaster continuum--a Knaster continuum which continuously and openly surjects onto all other Knaster continua and particularly on the dynamics of the Polish group $\Homeo(K)$ of homeomorphisms of the universal Knaster continuum. Some recent results on the dynamics of continua homeomorphism groups of the sort we are interested in here can be found in \cite{bkumf} and \cite{pestov}. In each case, an appropriate \emph{extremely amenable} subgroup of the homeomorphism group is found and studied. A topological group is \emph{extremely amenable} when every continuous action of it on a compact Hausdorff space has a fixed point (see \cite{pestovbook} for more on the topic). 

The main result of the current paper is a structural theorem about the group $\Homeo (K)$:
\begin{thm}\label{thmintro}
The group $\Homeo (K)$ is isomorphic as a topological group to $U \rtimes F$ where $U$ is an extremely amenable Polish group and $F$ is the free abelian group on countably many generators with the discrete topology.
\end{thm}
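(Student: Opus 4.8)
The plan is to realise $\Homeo(K)$ as a split extension
\[ 1 \longrightarrow U \longrightarrow \Homeo(K) \stackrel{\Phi}{\longrightarrow} F \longrightarrow 1 \]
with $U$ open, normal and extremely amenable and with a distinguished complement $\Lambda \cong F$, so that $\Homeo(K) = U \rtimes \Lambda \cong U \rtimes F$; then $U$ is automatically Polish, being an open subgroup of the Polish group $\Homeo(K)$, and $F$ is automatically discrete, being $\Homeo(K)/U$. I would carry out essentially all of the work inside the dense subgroup $\aut(\mathbb{K})$ and transfer at the end. Recall that $K$ is homeomorphic to the quotient of the universal solenoid $\widehat{\mathbb{Q}}$ by the inversion $x \mapsto -x$ --- equivalently, $K \cong \varprojlim\bigl([0,1], f_{p_1}, f_{p_2}, \dots\bigr)$ for a sequence of sawtooth maps realising the supernatural number $\prod_{p}p^{\infty}$ --- and that, by the results of the earlier sections, $\aut(\mathbb{K})$ embeds as a dense subgroup of $\Homeo(K)$ through the canonical quotient $\mathbb{K} \to K$.

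For $F$ one natural choice is $F = \mathbb{Q}^{\times}/\{\pm 1\} \cong \bigoplus_{p\ \mathrm{prime}} \mathbb{Z}$, which is free abelian on countably many generators. It sits inside $\Homeo(K)$ as the group $\Lambda$ generated by the homeomorphisms $m_{p}$ of $K$ induced by multiplication by the prime $p$ on $\widehat{\mathbb{Q}}$: each $m_{q}$ ($q \in \mathbb{Q}^{\times}$) is a homeomorphism of $\widehat{\mathbb{Q}}$ because $q$ is invertible on $\mathbb{Q}$, and it descends to $K$ because $\mathbb{Q}^{\times}$ is abelian, so $m_{q}$ commutes with the inversion. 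Next I would define a homomorphism $\Phi \colon \Homeo(K) \to F$ recording the ``multiplier'' of a homeomorphism: a homeomorphism of $K$ respects the canonical tower of finite quotients of $\mathbb{K}$ and, for each prime $p$, induces on its $p$-indexed part a shift by a well-defined integer, only finitely many of which are nonzero, so that $\Phi$ lands in $\bigoplus_{p}\mathbb{Z}$. Put $U := \ker \Phi$. The multiplier is locally constant near the identity, so $U$ contains a neighbourhood of $1$ and is therefore open, hence closed and normal, and $\Homeo(K)/U$ is discrete. Finally $\Phi|_{\Lambda}$ carries $m_{p}$ to the $p$-th generator, so it is an isomorphism onto $F$; this shows at once that $\Phi$ is surjective (so $\Homeo(K)/U \cong F$), that $\Lambda \cap U = 1$, and that $\Lambda U = \Homeo(K)$, i.e.\ that $\Lambda$ is a complement to $U$.

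The central point is that $U$ is extremely amenable. I would identify $U \cap \aut(\mathbb{K})$ with $\aut(\mathbb{K}_{0})$, where $\mathbb{K}_{0}$ is the projective Fra\"{\i}ss\'e limit of the ``rigidified'' family $\mathcal{F}_{0}$ obtained from the Knaster family by adjoining the orientation- and prime-level markings that the symmetries $m_{p}$ permute, and then prove that $\mathcal{F}_{0}$ has the projective Ramsey property: for all $\mathbf{A} \le \mathbf{B}$ in $\mathcal{F}_{0}$ and all $k$ there is $\mathbf{C} \in \mathcal{F}_{0}$ with $\mathbf{C} \to (\mathbf{B})^{\mathbf{A}}_{k}$ in the sense of epimorphisms. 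By the projective analogue of the Kechris--Pestov--Todorcevic correspondence this gives that $\aut(\mathbb{K}_{0})$ is extremely amenable. Since $U \cap \aut(\mathbb{K})$ is dense in the open group $U$, and extreme amenability passes from a dense subgroup to the ambient group (a fixed point for the dense subgroup is fixed by its closure), $U$ is extremely amenable; moreover $U$ is open and normal in $\Homeo(K)$ and $\Homeo(K)/U \cong \aut(\mathbb{K})/(U \cap \aut(\mathbb{K})) \cong F$, which completes the extension.

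The main obstacle is the projective Ramsey theorem for $\mathcal{F}_{0}$. These rigidified interval-cover structures carry an arc-like linear skeleton decorated by cyclic $\mathbb{Z}/p$-data, so I expect the proof to combine the Graham--Rothschild dual Ramsey theorem --- which governs the epimorphisms between the linear skeletons --- with the comparatively tame Ramsey behaviour of finite cyclic groups, assembled by an induction on the height of $\mathbf{B}$; setting up the amalgamation so that both ingredients apply simultaneously is the delicate step. A secondary point needing care is the precise definition of the multiplier $\Phi$, in particular its finite-support property and, dually, the fact that $\aut(\mathbb{K})$ is no larger than $\aut(\mathbb{K}_{0})$ extended by the $m_{p}$'s; this should fall out of the way $\mathcal{F}_{0}$ is extracted from the original family.
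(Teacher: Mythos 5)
Your overall architecture matches the paper's: both realize $\Homeo(K)$ as a split extension of a countable discrete free abelian group by the open normal kernel of a ``degree/multiplier'' homomorphism, split by the commuting standard multiplication (tent-map-induced) homeomorphisms $f_p$, and both reduce extreme amenability of the kernel to a projective Ramsey theorem for a rigidified Fra\"{\i}ss\'e class via the dual KPT correspondence and density. However, there is a genuine gap: the central technical step --- the projective Ramsey property for the degree-one class --- is not proved but only conjectured, and the toolkit you propose for it (Graham--Rothschild dual Ramsey combined with Ramsey combinatorics of finite cyclic groups, by induction on height) does not obviously apply and is in any case far heavier than what is needed. In the paper the rigidified class $\mathcal{K}^*$ consists simply of pairs $(A,n)$ with $n\in\Q^\times$, where morphisms $(B,m)\to(A,n)$ are the epimorphisms of degree exactly $m/n$; the key observation is that a degree-$d$ epimorphism $\fl{n}\to\fl{k}$ of pointed linear graphs is completely determined by its set of $d(k-1)$ breakpoints, which is an increasing injection $[d(k-1)]\hookrightarrow[n]$. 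This turns the required coloring statement into a literal instance of the classical finite Ramsey theorem, with no dual Ramsey theorem and no cyclic-group data anywhere. Since your $\mathcal{F}_0$ is only described loosely (``orientation- and prime-level markings''), it is not clear that your class even admits projective amalgamation or that its limit's automorphism group coincides with the degree-one subgroup of $\aut(\bK)$; in the paper this identification (Proposition 7.4) is itself a short but necessary computation with the degrees $\deg(g_j)=m_{i_j}/m_j$ along a generic sequence.

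A secondary, smaller gap is the well-definedness and continuity of your multiplier $\Phi$. That a homeomorphism of $K$ has a well-defined degree in $\Q^\times$, that degree is multiplicative, and that it is locally constant (hence $U=\ker\Phi$ is open) are nontrivial facts resting on D\k{e}bski's approximation results (existence of open maps $f_i:I\to I$ with $f_i\circ\pi_i^K\to\pi_1^K\circ f$ and eventual constancy of $\deg(f_i)/\deg(t_1^i)$); you assert these via the solenoid picture without argument. This is citable rather than fatal, but as written it is asserted, not established.
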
 
\noindent Recall that the topology on the semidirect product of two topological groups is the product topology on the underlying product space. Theorem \ref{thmintro} is proven in Section \ref{sec7degreeone} (as Theorem \ref{thm5.5}). We also show (Proposition \ref{prop5}) that $\Homeo(K)$ is \emph{not} isomorphic to the direct product $U \times F$. 

A few definitions before we say more about the motivation for Theorem \ref{thmintro}; if $G$ is a topological group, we call a compact Hausdorff space $X$ equipped with a continuous $G$-action, a \emph{$G$-flow}. It is \emph{minimal} if the orbit of each point is dense in $X$. By abstract topological dynamics, for each topological group $G$ we can find a unique (up to isomorphism) flow $\mathcal{M}(G)$ that is minimal and has the property that for any minimal $G$-flow $X$, there is a $G$-equivariant, continuous surjection $\mathcal{M}(G) \to X$ (see \cite{uspchains} for a short proof of these facts). The flow $\mathcal{M}(G)$ is called the \emph{universal minimal flow} of $G$. Observe that $G$ is extremely amenable if and only if $\mathcal{M}(G)$ is a singleton. After extreme amenability the next dividing line in complexity is whether or not $\mathcal{M}(G)$ is a metrizable space.
By results of Ben Yaacov, Melleray, and Tsankov in \cite{bymt}, when $\mathcal{M}(G)$ is metrizable, $G$ must contain a large (closed, co-precompact) extremely amenable subgroup $H$ and the universal minimal flow of $G$ is the translation action on the completion of $G/H$. Our case is firmly outside the realm of \cite{bymt}; it is immediate from Theorem \ref{thmintro} that $\mathcal{M}(\Homeo(K))$ is non-metrizable and with a little more work that $\mathcal{M}(\Homeo(K))$ is homeomorphic to $\mathcal{M}(F)$ (Corollary \ref{cor4}). Even so it is interesting to compare the situation here with the theory of metrizable universal minimal flows developed in \cite{bymt}. The group $\Homeo(K)$ also has a ``large''--in that it is open, normal (it is of course, \emph{not} pre-compact)-- extremely amenable subgroup whose quotient determines the behavior of its universal minimal flow. 

The main tools used in the paper are \emph{projective Fraissé families} and the Kechris-Pestov-Todorcevic correspondence. Irwin and Solecki in \cite{irw-sol} introduced projective Fraissé families of finite structures, a dual version of a classical construction from model theory. The Fraissé limits in the dual setting carry zero-dimensional compact topologies (rather than being countable spaces as in the classical setting). Under mild assumptions, the projective Fraissé family will have a \emph{canonical quotient} that is a compact and often connected topological space (precise definitions are in Section \ref{sec3fraisecategory}). Projective Fraissé families are by now a well-used tool for studying continua, they are used for example in the work of Bartošovà and Kwiatkowska in computing the universal minimal flow of the homeomorphism group of the Lelek fan (see \cite{lelekfan}, \cite{bkumf}). The Kechris-Pestov-Todorcevic correspondence is a general result that connects extreme amenability of automorphism groups of Fraissé limits with the Ramsey property for classes of finite structures (see \cite{kpt}).

Now we outline the structure of the paper. Section 2 contains some background on Knaster continua. In Section \ref{sec3fraisecategory}, we define a family $\mathcal{K}$ of finite structures and prove that it is a projective Fraissé family (Theorem \ref{thm1}). In Section \ref{sec4fraisselimit}, we prove that the quotient of the Fraissé limit, $\bK$, is homeomorphic to the universal Knaster continuum, $K$. In Section \ref{sec5density}, we show that the group $\aut(\bK)$ embeds densely into the group $\Homeo(K)$ via an approximate projective homogeneity property for $K$ (Theorem \ref{thm4.5}). We note that Wickman in her Ph.D. thesis,  \cite{wickman}, independently discovered a projective Fraissé class whose limit approximates the universal Knaster continuum. The automorphism group of the Fraissé limit of the class from \cite{wickman} however does not embed densely into the group $\Homeo (K)$ which is the main property we are concerned with in this paper.

In Section 6, we collect some results of D\k{e}bski that we need for the computation of universal minimal flows. The central notion needed is D\k{e}bski's definition of \emph{degree} for homeomorphisms of the Knaster continuum. In Section \ref{sec7degreeone}, we prove the main theorem (Theorem \ref{thm5.5}). 

\subsection*{Acknowledgements} I would like to thank S\l awek Solecki and Lauren Wickman for helpful conversations about this project. I would also like to thank S\l awek Solecki for suggesting this question to work on.

\section{Background on Knaster continua}\label{sec2knaster}
Let $I=[0,1]$ be the closed unit interval. A \emph{Knaster continuum} is a continuum of the form $\varprojlim (I, T_n^{n+1})$ where each $T_n^{n+1}: I \to I$ is continuous, open, surjection that maps $0$ to $0$. In the case that all but finitely many of the maps $T_n^{n+1}$ are monotone, the resulting continuum $\varprojlim (I, T_n^{n+1})$ is homeomorphic to an arc. Except for this trivial case, all other Knaster continua are \emph{indecomposable}--they cannot be written as the union of two non-trivial proper subcontinua. The simplest example of a Knaster continuum is one in which each bonding map is the tent-map given by

\[
f_2(x)=\begin{cases}
2x & \textrm{ if }x\leq \frac{1}{2}\\
2-2x & \textrm{ if }x>\frac{1}{2}\\
\end{cases}
\]

\noindent and is known as Brouwer's ``buckethandle" continuum. Indeed, it can be embedded into $\R^2$ in such a way that it resembles a thick buckethandle whose sections are Cantor sets (several pictures of Knaster continua can be found in \cite{watkins}). We collect below a few basic facts and definitions about Knaster continua that we will need.

First, an observation about continuous open maps $I \to I$ (it can be found as \cite{eberhart}, Theorem 2.1 for reference).

\begin{lem}\label{obs0}
Let $f:I \to I$ be open and continuous, with $f(0)=0$. Then, there exists $0=x_0<x_1<\cdots<x_n=1$ so that $f \restriction_{[x_i,x_{i+1}]}$ is a homeomorphism of $[x_i,x_{i+1}]$ onto $[0,1]$ and $f(x_i)=i \pmod 2$ for all $i$.
\end{lem}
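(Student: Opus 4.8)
The plan is to show that the required partition points are exactly the points of the closed set $Z := f^{-1}(\{0,1\})$, and to extract every claimed property from the hypothesis that $f$ is open. I would start with two elementary observations. First, $f$ is not constant on any nondegenerate subinterval, since otherwise $f$ maps the interior of that subinterval to a single point, which is not open in $I$. Second -- the workhorse -- if $x\in(0,1)$ is a point at which $f$ has a local maximum, then $f(x)=1$: a small open interval $J\ni x$ is mapped by $f$ onto an open subset of $I$ which is contained in $[0,f(x)]$ and contains $f(x)$; since $f$ is not locally constant the value $f(x)$ cannot be the minimum $0$, and an open subset of $I$ cannot have a point of $(0,1)$ as its maximum, so $f(x)=1$. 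Symmetrically, an interior local minimum forces the value $0$. It follows that on any subinterval $[a,b]\subseteq I$ with $f((a,b))\subseteq(0,1)$ the map $f$ has no interior local extremum; a continuous function on an interval with no interior local extremum that is moreover not locally constant is strictly monotone, so $f\restriction_{[a,b]}$ is strictly monotone, hence a homeomorphism onto its image.

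Next I would prove that $Z$ is finite, which is the step I expect to be the main obstacle: strict monotonicity on complementary intervals is cheap, but ruling out infinitely many of them is where openness really does work. Suppose $f^{-1}(0)$ were infinite. It is closed, so it contains a strictly monotone sequence of distinct points $t_k\to p$, and $f(p)=0$ by continuity. Between consecutive terms $t_k,t_{k+1}$ the function $f$ is nonnegative and not identically $0$, so it attains a positive maximum at some interior point $s_k$; by the workhorse observation $f(s_k)=1$. Since $s_k$ lies strictly between $t_k$ and $t_{k+1}$ we get $s_k\to p$, whence $f(p)=\lim_k f(s_k)=1$, contradicting $f(p)=0$. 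The same argument with the roles of $0$ and $1$ interchanged shows $f^{-1}(1)$ is finite, so $Z$ is finite; write $Z=\{x_0<x_1<\cdots<x_n\}$ with $x_0=0$ (as $f(0)=0$).

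Finally I would read off the conclusion. For each $i<n$, the function $f$ takes values in $(0,1)$ on $(x_i,x_{i+1})$ by definition of $Z$, so $f\restriction_{[x_i,x_{i+1}]}$ is strictly monotone; its endpoint values $f(x_i),f(x_{i+1})$ lie in $\{0,1\}$ and are distinct by strict monotonicity, so $f$ maps $[x_i,x_{i+1}]$ homeomorphically onto $[0,1]$ and $f(x_{i+1})=1-f(x_i)$. Together with $f(x_0)=0$ this gives $f(x_i)=i\bmod 2$ by induction. It remains to check $x_n=1$. If not, then $f((x_n,1))\subseteq(0,1)$, so $f$ is strictly monotone on $[x_n,1]$; but then for $\varepsilon>0$ with $1-\varepsilon>x_n$ the set $(1-\varepsilon,1]$ is open in $I$ while its image under $f$ is a half-open interval having $f(1)\in(0,1)$ as its closed endpoint, which is not open in $I$ -- contradicting openness of $f$. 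Hence $x_n=1$, and $x_0<x_1<\cdots<x_n$ has all the required properties.
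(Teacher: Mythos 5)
Your proof is correct. Note that the paper does not actually prove this lemma: it is quoted as Theorem 2.1 of Eberhart--Fugate--Schumann (the reference \cite{eberhart}), so there is no in-paper argument to compare against. Your self-contained proof follows the standard route for such statements and all the steps check out: openness rules out local constancy; an interior local maximum (resp.\ minimum) must take the value $1$ (resp.\ $0$) because an open subset of $[0,1]$ cannot attain a maximum in $(0,1)$; this forces strict monotonicity between consecutive points of $Z=f^{-1}(\{0,1\})$; the accumulation argument correctly shows $Z$ is finite (this is indeed the step where openness is doing real work, and your argument that a cluster point of $f^{-1}(0)$ would also be a limit of points where $f=1$ is sound); and the endpoint case $x_n=1$ is handled properly via the image of $(1-\varepsilon,1]$. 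The only stylistic remark is that the finiteness argument tacitly extracts a strictly monotone subsequence and works with consecutive terms of that subsequence, which you do flag; no gap there.
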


Given $f$ as in Lemma \ref{obs0}, the number $n$ is unique and is called the \emph{degree} of $f$. We denote it by $\deg (f)$. An obvious but somehow vital observation is that if $f$ and $g$ are open continuous maps $I \to I$, then $\deg (f \circ g)=\deg (f) \deg (g)$.

Let $K=\varprojlim (I_n, T_n^{n+1})$ be a Knaster continuum. The topology on $K$ is the product topology and it is induced by the metric:
\[d_K(x,y)=\sum_{n=0}^\infty \frac{\norm{x(n)-y(n)}}{2^{n+1}}\]
for all $x,y \in K$.
We always consider the group of all homeomorphisms of $K$, which we denote $\Homeo(K)$, with the topology of uniform convergence. This is the topology induced by the metric:
\[d_{\sup}(f,g)=\sup_{x \in K} d_K(f(x),g(x))\]
for all $f,g \in \Homeo(K)$.
With this topology, $\Homeo(K)$ is a Polish group (a topological group whose topology is completely metrizable and separable).

\section{A Fraisse category of finite linear graphs}\label{sec3fraisecategory}

We define in this section a projective Fraissé category of finite graphs to approximate the universal Knaster continuum. A \emph{finite linear graph}, $(V,R)$, is a finite set $V$ of vertices together with an edge relation $R \subseteq V^2$ satisfying:
\begin{enumerate}
    \item for each $a, b \in V$, $(a,a) \in R$ and $(a,b) \in R \implies (b,a) \in R$
    \item for each $a \in V$, $\norm{N_R(a)} \in \{1,2\}$ where $N_R(a):=\{b \in V \setminus \{a\} \ : \ (a,b) \in R\}$
    \item $\norm{\{a\in V \ : \ \norm{N_R(a)}=1\}}=2$
\end{enumerate}
Note that our graphs are different than usual combinatorial graphs in that we allow (in fact, require) loops at vertices.

These two special vertices that satisfy condition (3) will be called \emph{end vertices of V}. A subset of a finite linear graph is \emph{connected} if it is connected with respect to the edge relation. Also we have the usual graph metric, $d_R$, on $(V,R)$ where $d_R(x,y)$ is the length of the shortest $R$-path in $V$ connecting $x$ to $y$. 

A \emph{pointed finite linear graph} is a finite linear graph with one special designated end vertex called the \emph{zero-vertex}. As suggested by the name, a convenient notation for working with finite linear graphs is the following: we label the vertices of a graph with $n$ vertices by $0,1,\ldots, n-1$ so that $0$ corresponds to the zero-vertex and vertices $i$ and $j$ are connected by an edge if and only if $\norm{i-j}\leq 1$. By $\fl{n}$, we denote the pointed finite linear graph with $n$ vertices labeled as just described. An \emph{epimorphism} between pointed finite linear graphs is a function $f:V \to W$ that is a graph homomorphism (i.e., $f$ takes edges to edges) which is surjective on vertices and edges and which preserves the zero-vertex. 

To put things into the model-theoretic context in which Fraissé theory is usually developed: we are working with a language $\mathcal{L}=\{R,c\}$ consisting of one binary relation symbol and one constant symbol respectively and with the class of finite $\mathcal{L}$-structures in which $R$ is interpreted as an edge relation satisfying (1)-(3) above and $c$ is interpreted as an endpoint. Epimorphisms as defined above correspond exactly to the usual model theory notion of epimorphisms between structures.

We will consider a Fraissé category in which objects are pointed finite linear graphs and morphisms are restricted to a proper subset of all epimorphisms. 

\begin{definition}\label{defn1}
Let $\mathcal{K}$ be the category of pointed finite linear graphs where morphisms are all epimorphisms, $f:\fl{n} \to \fl{m}$, such that: there exist $0= i_0<i_1<\cdots<i_k=n-1$ such that
\begin{enumerate}
    \item $f \vert_{[i_j,i_{j+1}]}$ is either non-increasing or non-decreasing for $0\leq j\leq k-1$
    \item $f(i_j)=m-1$ for $j$ odd 
    \item $f(i_j)=0$ for $j$ even
\end{enumerate}
\end{definition}

The morphisms are essentially discrete versions of the open, continuous surjections  $[0,1] \to [0,1]$. For a morphism $f$ as in Definition \ref{defn1}, define the \emph{degree} of $f$, denoted $\textrm{deg}(f)$, to be $k$.

Following \cite{pan-sol} and \cite{irw-sol}, we say that a class $\mathcal{F}$ of finite $\mathcal{L}$-structures with a fixed family of morphisms is a \emph{projective Fraissé family} if: 
\begin{enumerate}
    \item $\mathcal{F}$ is a category--morphisms are closed under composition and the identity map on each structure is a morphism
    \item $\mathcal{F}$ contains countably many structures (up to isomorphism)
    \item for any $A, B \in \mathcal{F}$, there exist $C \in \mathcal{F}$ and morphisms $C \to A$ and $C \to B$
    \item for any $A,B,C \in \mathcal{F}$ and morphisms $f:B \to A$ and $g:C \to A$, there exists $D \in \mathcal{F}$ and morphisms $f':D \to B$ and $g':D \to C$ so that $g \circ g'=f\circ f'$
\end{enumerate}
Properties (3) and (4) above are called the \emph{joint-projection property} and the \emph{projective amalgamation property} respectively. 

\begin{thm}\label{thm1}
 $\mathcal{K}$ is a projective Fraissé family.
\end{thm}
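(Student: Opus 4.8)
The plan is to verify the four axioms of a projective Fraïssé family for $\mathcal{K}$ in turn. Axioms (1) and (2) are essentially bookkeeping: for (1), I must check that the composition of two morphisms in the sense of Definition \ref{defn1} is again such a morphism, which amounts to noting that if $g:\fl{n}\to\fl{m}$ has breakpoints $i_0<\cdots<i_k$ and $f:\fl{m}\to\fl{\ell}$ has breakpoints $j_0<\cdots<j_t$, then $f\circ g$ is piecewise monotone with the breakpoints being the $i_j$ together with the preimages under $g$ of the $j_s$ lying in the interior of each monotone piece; the values at breakpoints alternate between $0$ and $\ell-1$ because monotone surjections $[0,m-1]\to[0,\ell-1]$ restricted between consecutive $j_s$-values behave correctly, and $\deg(f\circ g)=\deg(f)\deg(g)$ (the discrete analogue of the remark after Lemma \ref{obs0}). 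The identity map on $\fl{n}$ is a morphism of degree $1$ (take $k=1$). Axiom (2) is clear since there is exactly one structure $\fl{n}$ for each $n\geq 2$ up to isomorphism.

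For axiom (3), the joint-projection property, given $\fl{n}$ and $\fl{m}$ I would first observe that there is always a morphism $\fl{N}\to\fl{n}$ for every sufficiently large $N$ (e.g. take $N$ so that one can subdivide $[0,N-1]$ into $n-1$ monotone pieces of appropriate parity — a degree-one ``stretch'' works, and then one can compose with higher-degree self-maps of $\fl{n}$ to adjust), so it suffices to produce a common large domain mapping onto each. Concretely, given any morphism $\fl{N}\to\fl{n}$, precomposing with a suitable morphism $\fl{N'}\to\fl{N}$ lets me hit $\fl{m}$ as well; more cleanly, (3) follows from (4) applied with $A=\fl{2}$ (the two-vertex graph, which receives a morphism from every $\fl{n}$), so I would prove (4) and deduce (3).

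The main work, and the expected main obstacle, is axiom (4): projective amalgamation. Given $f:\fl{p}\to\fl{n}$ and $g:\fl{q}\to\fl{n}$, I need $\fl{r}$ with $f':\fl{r}\to\fl{p}$ and $g':\fl{r}\to\fl{q}$ so that $f\circ f'=g\circ g'$. The natural candidate is a ``fibered product'': a large linear graph whose vertices are sequences traversing the paths in $\fl{p}$ and $\fl{q}$ that project to a common long walk on $\fl{n}$. The key construction is to build a single long walk $w$ on $\fl{n}$ starting and ending appropriately (at $0$, say, with the right parity data) that is simultaneously realizable as the $f$-image of a monotone-piecewise walk on $\fl{p}$ and the $g$-image of one on $\fl{q}$; the degree of the common composite must be a common multiple of $\deg(f)$ and $\deg(g)$ and I expect to take it to be $\mathrm{lcm}$ or a suitable multiple so that the walk decomposes correctly over both. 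One then defines $\fl{r}$ to have vertices the entries of $w$ and $f',g'$ the two coordinate projections, checking surjectivity on vertices and edges (surjectivity of $f,g$ guarantees every vertex and edge of $\fl{p}$, $\fl{q}$ appears) and that $f',g'$ are piecewise monotone with the alternating-endpoint property (the pieces of $w$ refine the pieces of both factorizations). The delicate point will be making the parities and breakpoint conditions (2)--(3) of Definition \ref{defn1} hold simultaneously for $f'$ and $g'$; I anticipate handling this by first replacing $f$ and $g$ with $f\circ s_1$ and $g\circ s_2$ for degree-one ``rescalings'' $s_i$ so that their degrees agree (or one divides the other), reducing to the case $\deg(f)=\deg(g)$, and then amalgamating walk-piece by walk-piece. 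This per-piece amalgamation reduces to the case where $f$ and $g$ are themselves monotone, i.e. to amalgamating two monotone surjections of linear graphs over a common target, which is elementary.
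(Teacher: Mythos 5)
Your overall strategy is the paper's: reduce projective amalgamation to a ``slope-by-slope'' matching along a common long walk over $\fl{n}$ whose degree is a common multiple of $\deg(f)$ and $\deg(g)$, with the base case being the amalgamation of two monotone surjections (which the paper carries out as Lemma \ref{lem3} by taking coordinatewise maxima of the multiplicities of each value). Your reduction of the joint projection property to amalgamation over $\fl{2}$ is a clean variant of what the paper dismisses as routine, and is fine. However, two points in your treatment of axiom (4) need attention.

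First, a concrete error: you propose to ``reduce to the case $\deg(f)=\deg(g)$'' by precomposing with \emph{degree-one} rescalings $s_1,s_2$. Since $\deg(f\circ s)=\deg(f)\deg(s)$, a degree-one precomposition never changes the degree, so this reduction is impossible as stated (e.g.\ degrees $2$ and $3$ can never be equalized this way). What you actually need is $\deg(s_1)=L/\deg(f)$ and $\deg(s_2)=L/\deg(g)$ for a common multiple $L$; with that fix the equal-degree reduction is viable and arguably streamlines the bookkeeping. Second, the ``delicate point'' you defer is precisely the mathematical content of the paper's proof, and it is not automatic: once you commit to a common composite of degree $L$, the order in which the slopes of $f$ (respectively $g$) are traversed by $f'$ (respectively $g'$) is \emph{forced} --- up through slopes $1,\dots,l_1$, then back down through their reversals, and so on --- and one must verify that at every step $j$ the slope of $f$ and the slope of $g$ being paired are either both non-decreasing or both non-increasing, since otherwise no common refinement by monotone maps exists on that piece. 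The paper does this by writing explicit schedules $\alpha$ and $\beta$ (blocks $a_1{}^\frown a_2{}^\frown\cdots$ and $b_1{}^\frown b_2{}^\frown\cdots$ of total length $L=\deg(f)\deg(g)$) and running a parity argument showing $f_{\alpha(j)}$ and $g_{\beta(j)}$ always lie in the same one of $\mathcal{K}_{\textrm{inc}}$ or $\mathcal{D}$. The paper also first normalizes $f$ and $g$ (via Lemmas \ref{lem1}--\ref{lem2}, duplicating values at turning points) so that their domains split into \emph{disjoint} intervals each mapped by a full monotone surjection; without some such normalization the interval-by-interval gluing of $f'$ and $g'$ at shared breakpoints does not quite parse. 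Your proposal identifies the right skeleton but leaves both the schedule-compatibility verification and this normalization as unproven expectations, and the one mechanism you do specify for handling parity is the one that cannot work.
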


 Points (1)-(3) in the definition are easy to check: we will show that the projective amalgamation property holds. The proof proceeds by amalgamating ``slope-by-slope'' via a sequence of lemmas. The following notation is convenient. For $f:\fl{n}\to\fl{m}$ a function, we let $\str(f) \in [m]^n$ be the string $f(0),f(1),\ldots, f(n-1)$. For a finite string $a=a_0,a_1,\ldots ,a_{n-1}$, we denote by $\rev(a)$ the string $a_{n-1}, a_{n-2},\ldots ,a_0$. For $a=a_0,a_1,\ldots,a_{n-1}$ and $0\leq i \leq j \leq  n-1$, by $a \restriction_i^j$ we mean the string $a_i,a_{i+1},\ldots,a_j$. The symbol $a^\frown b$ denotes the concatenation of string $a$ with string $b$. 
 
 Let $\mathcal{K}_{\textrm{inc}}$ be the set of non-decreasing morphisms in $\mathcal{K}$. Let $\mathcal{D}$ be the set of non-increasing functions between objects of $\mathcal{K}$ which send edges to edges and are surjective on both vertices and edges. Note that the members of $\mathcal{D}$ are not technically epimorphisms since they do not preserve the zero-vertex.

\begin{lem}\label{lem1}
Let $f:\fl{n}\to\fl{m}$ be a morphism. For all $i \in [n]$, $j\in \N$, if we let 
\[s=(\str(f)\restriction_0^{i-1})^\frown \underbrace{f(i)f(i)\cdots f(i)^\frown}_{j-\textrm{times}} (\str(f)\restriction_{i+1}^{n-1})\]
then there is $\phi \in \mathcal{K}_{\textrm{inc}}$, $\phi:\fl{n+j}\to \fl{n}$ with $s=\str(f \circ \phi)$. 
\end{lem}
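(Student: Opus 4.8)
The plan is to exhibit $\phi$ explicitly as the ``plateau insertion'' at the vertex $i$: it should be the identity up to $i$, constant equal to $i$ across a block of new vertices, and then the identity shifted back past that block. Precisely, I would take $\phi\colon\fl{n+j}\to\fl{n}$ to be
\[
\phi(t)=\begin{cases}
t & 0\le t\le i,\\
i & i\le t\le i+j,\\
t-j & i+j\le t\le n+j-1,
\end{cases}
\]
where the three clauses agree at the shared endpoints $t=i$ and $t=i+j$, so $\phi$ is well defined; the block of vertices $i,i+1,\dots,i+j$ is the plateau that produces the repeated entries of $s$.

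The first thing to check is that $\phi\in\mathcal{K}_{\textrm{inc}}$. It is manifestly non-decreasing and fixes the zero-vertex. Any two consecutive vertices $t,t+1$ lie together in one of the intervals $[0,i]$, $[i,i+j]$, $[i+j,n+j-1]$ (the only seams between these occur at the common endpoints $i$ and $i+j$), and on each such interval $\phi$ changes values by at most $1$, so $\phi$ is a graph homomorphism. It is onto $\fl n$ on vertices, since the first and third clauses already hit every vertex of $\fl n$; it is onto on edges, since the edge $\{v,v+1\}$ with $v<i$ is the $\phi$-image of $\{v,v+1\}$ and with $i\le v\le n-2$ it is the $\phi$-image of $\{v+j,v+j+1\}$. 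Hence $\phi$ is an epimorphism, and being non-decreasing it satisfies Definition \ref{defn1} with $k=1$, $i_0=0$, $i_1=n+j-1$; therefore $\phi\in\mathcal{K}_{\textrm{inc}}$. Since morphisms of $\mathcal{K}$ are closed under composition, $f\circ\phi\colon\fl{n+j}\to\fl m$ is again a morphism.

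It then remains only to read off $\str(f\circ\phi)$ by substituting the three clauses of $\phi$: for $0\le t\le i$ we get $f(t)$, for $i\le t\le i+j$ we get $f(i)$, and for $i+j\le t\le n+j-1$ we get $f(t-j)$, which runs through $f(i),f(i+1),\dots,f(n-1)$. Concatenating these three blocks — so that the entry $f(i)$ ending the first block together with the $j$-fold constant middle block supplies the repeated entries — yields exactly $s$. I do not anticipate any real obstacle here: the whole content of the lemma is in writing down the correct plateau map, and the only points that need care are the index bookkeeping at the two seams $t=i$ and $t=i+j$, and the degenerate cases $i=0$ and $i=n-1$ in which the plateau sits at an end vertex, all of which are handled uniformly by the displayed formula for $\phi$.
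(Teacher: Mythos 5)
Your proposal is correct and is essentially the paper's own proof: both exhibit the same explicit ``plateau'' map that is the identity before $i$, constant equal to $i$ on a block of inserted vertices, and a shift afterwards, and then read off $\str(f\circ\phi)$. The only discrepancy is an off-by-one already present in the lemma statement itself (the displayed string $s$ has $n+j-1$ entries while the claimed domain is $\fl{n+j}$): your $\phi$ matches the domain $\fl{n+j}$ but yields $j+1$ copies of $f(i)$ rather than the $j$ copies shown in $s$, a bookkeeping mismatch mirrored in the paper's own formula and immaterial to the way the lemma is used in Lemma \ref{lem2}.
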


\begin{proof}
Let $\phi(l)=\begin{cases}
l & \textrm{ if }1\leq l\leq i-1\\
i & \textrm{ if }i\leq l\leq i+j-1\\
l-j+1 & \textrm{ if }i+j\leq l\leq n+j-1\\
\end{cases}$

One can check that $\str(f\circ \phi)= s$ and clearly $\phi$ is non-decreasing.
\end{proof}

Applying Lemma \ref{lem1} $n$ times and composing yields the following:

\begin{lem}\label{lem2}
Let $f:\fl{n}\to\fl{m}$ be a morphism, $i_0,i_1,\ldots,i_{n-1} \in \N$ and let
\[s=\underbrace{f(0) \cdots f(0)}_{i_0 \textrm{ times }}\underbrace{f(1) \cdots f(1)}_{i_1 \textrm{ times }}\cdots \underbrace{f(n-1) \cdots f(n-1)}_{i_{n-1} \textrm{ times }}\]  
Then there is $\phi \in \mathcal{K}_{\textrm{inc}}$ so that $\str(f\circ \phi)=s$. 
\end{lem}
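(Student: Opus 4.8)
The plan is to produce the string $s$ one constant block at a time by invoking Lemma~\ref{lem1} once for each coordinate of $f$, and then to compose the resulting non-decreasing morphisms. I assume throughout that $i_l \ge 1$ for every $l$ (this is forced: a member of $\mathcal{K}_{\textrm{inc}}$ is surjective on vertices, so every coordinate of $f$ must occur in $s$).

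Concretely, set $g_0 = f$ and $\psi_0 = \textrm{id}_{\fl{n}}$, and run an induction on $l = 0, 1, \ldots, n-1$ in which I maintain a morphism $g_l$ in $\mathcal{K}$ together with a factorization $g_l = f \circ \psi_l$ with $\psi_l \in \mathcal{K}_{\textrm{inc}}$, such that
\[
\str(g_l) = \underbrace{f(0)\cdots f(0)}_{i_0}\,\cdots\,\underbrace{f(l-1)\cdots f(l-1)}_{i_{l-1}}\, f(l)\, f(l+1) \cdots f(n-1).
\]
In this string the coordinate $f(l) = g_l(p_l)$ sits at position $p_l := i_0 + i_1 + \cdots + i_{l-1}$. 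Applying Lemma~\ref{lem1} to $g_l$ at the index $i = p_l$ with parameter $j = i_l$ produces $\chi_l \in \mathcal{K}_{\textrm{inc}}$ for which $g_{l+1} := g_l \circ \chi_l$ satisfies
\[
\str(g_{l+1}) = \underbrace{f(0)\cdots f(0)}_{i_0}\,\cdots\,\underbrace{f(l)\cdots f(l)}_{i_l}\, f(l+1) \cdots f(n-1),
\]
which is exactly the inductive statement with $l$ replaced by $l+1$; the required factorization is $g_{l+1} = f \circ (\psi_l \circ \chi_l)$, and $\psi_l \circ \chi_l$ again lies in $\mathcal{K}_{\textrm{inc}}$ because a composite of non-decreasing morphisms is non-decreasing and $\mathcal{K}$ is closed under composition (so $g_{l+1}$ is in particular a morphism of $\mathcal{K}$). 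After the step $l = n-1$ we have $\str(g_n) = s$, and then $\phi := \chi_0 \circ \chi_1 \circ \cdots \circ \chi_{n-1} \in \mathcal{K}_{\textrm{inc}}$ satisfies $f \circ \phi = g_n$, hence $\str(f \circ \phi) = s$, as required.

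The only point that needs care — and it is bookkeeping, not a genuine obstacle — is checking that after the first $l$ expansions the coordinate $f(l)$ has indeed moved to position $i_0 + \cdots + i_{l-1}$ and that inserting copies of it there does not disturb the blocks already built; both follow immediately from the explicit description of the map $\phi$ in the proof of Lemma~\ref{lem1}. As a sanity check (and an alternative argument) one can bypass the iteration entirely: define $\phi \colon \fl{\,i_0 + i_1 + \cdots + i_{n-1}\,} \to \fl{n}$ by collapsing the $l$-th block of $i_l$ consecutive vertices onto the vertex $l$. This $\phi$ is visibly non-decreasing, surjective on vertices and edges, and fixes the zero-vertex, so $\phi \in \mathcal{K}_{\textrm{inc}}$, and $\str(f \circ \phi) = s$ by construction.
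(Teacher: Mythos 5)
Your proof is correct and is exactly the argument the paper intends: the paper proves this lemma with the single line ``Applying Lemma \ref{lem1} $n$ times and composing,'' and your induction supplies precisely the bookkeeping (tracking that $f(l)$ sits at position $i_0+\cdots+i_{l-1}$ after the first $l$ expansions) that makes that one-liner rigorous, including the correct observation that each $i_l\ge 1$ is forced by surjectivity. The direct construction you add at the end is a valid and arguably cleaner alternative, but both routes land in the same place.
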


We can now amalgamate along one slope:

\begin{lem}\label{lem3}
Let $f:\fl{m}\to \fl{k}$ and $g:\fl{n}\to \fl{k}$ both be in $\mathcal{K}_{\textrm{inc}}$ or in $\mathcal{D}$. Then there exist $l\in\N$ and morphisms $\tilde{f}:\fl{l}\to\fl{m}$ and $\tilde{g}:\fl{l}\to\fl{n}$ so that $f \circ \tilde{f}=g\circ \tilde{g}$. Further $\tilde{f}$ and $\tilde{g}$ are both in $\mathcal{K}_{\textrm{inc}}$.
\end{lem}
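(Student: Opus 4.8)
The plan is to reduce the whole statement to Lemma~\ref{lem2} via a single observation about strings. A non-decreasing epimorphism $h\colon\fl{p}\to\fl{k}$ is completely determined by $\str(h)$, and equivalently by its tuple of fiber sizes $(a_0,\ldots,a_{k-1})$, where $a_j=\norm{h^{-1}(j)}\ge 1$ and $\sum_{j<k}a_j=p$: indeed $\str(h)$ is forced to be the non-decreasing word over $[k]$ that starts at $0$, ends at $k-1$, and has exactly $a_j$ consecutive copies of $j$. Hence, in order to force $f\circ\tilde f=g\circ\tilde g$, it is enough to arrange that the two composites $\fl{l}\to\fl{k}$ are non-decreasing epimorphisms with the \emph{same} fiber-size tuple.

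Consider first $f,g\in\mathcal{K}_{\textrm{inc}}$, with fiber-size tuples $(a_j)_{j<k}$ and $(b_j)_{j<k}$. Set $c_j:=\max(a_j,b_j)$ and $l:=\sum_{j<k}c_j$. For $f$, choose repetition multiplicities for the letters of $\str(f)$ block by block: within the $j$-th block (the $a_j$ consecutive positions carrying the value $j$) assign multiplicity $c_j-a_j+1\ge 1$ to the first position and multiplicity $1$ to each of the remaining $a_j-1$ positions. By Lemma~\ref{lem2} this yields $\tilde f\in\mathcal{K}_{\textrm{inc}}$, $\tilde f\colon\fl{l}\to\fl{m}$, with $\str(f\circ\tilde f)$ equal to the non-decreasing word having exactly $c_j$ copies of $j$ for each $j$. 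Running the identical construction with $g$ in place of $f$ gives $\tilde g\in\mathcal{K}_{\textrm{inc}}$, $\tilde g\colon\fl{l}\to\fl{n}$, with the very same string. Two functions $\fl{l}\to\fl{k}$ with equal strings are equal, so $f\circ\tilde f=g\circ\tilde g$, with $\tilde f,\tilde g\in\mathcal{K}_{\textrm{inc}}$ as required.

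Now suppose $f,g\in\mathcal{D}$. Let $r_k\colon\fl{k}\to\fl{k}$ be the order-reversing bijection $r_k(i)=k-1-i$, a graph automorphism with $r_k\circ r_k=\mathrm{id}$. If $h\in\mathcal{D}$ has domain $\fl{p}$, then $r_k\circ h\colon\fl{p}\to\fl{k}$ is non-decreasing, surjective on vertices and on edges, and sends edges to edges; and since a non-increasing surjection onto $\fl k$ has $h(0)=k-1$, we get $(r_k\circ h)(0)=0$, so $r_k\circ h\in\mathcal{K}_{\textrm{inc}}$. Applying the previous case to $r_k\circ f$ and $r_k\circ g$ (which share the codomain $\fl{k}$) produces $\tilde f,\tilde g\in\mathcal{K}_{\textrm{inc}}$ with $(r_k\circ f)\circ\tilde f=(r_k\circ g)\circ\tilde g$; composing on the left with $r_k$ and using $r_k\circ r_k=\mathrm{id}$ gives $f\circ\tilde f=g\circ\tilde g$.

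There is no serious obstacle here once Lemma~\ref{lem2} is available; the content is the reduction, and the two places requiring care are the only ones I would write out in full. First, one must apply Lemma~\ref{lem2} with all repetition multiplicities $\ge 1$, so that surjectivity — hence the epimorphism property — is preserved and $\str(f\circ\tilde f)$ really is the word claimed. Second, one must check that $r_k\circ h$ genuinely lies in $\mathcal{K}_{\textrm{inc}}$, the non-obvious clause being that it preserves the zero-vertex; this is exactly where one invokes that a map in $\mathcal{D}$ carries the zero-vertex of its domain to the top vertex $k-1$ of its codomain, which is itself forced by non-increasingness together with surjectivity.
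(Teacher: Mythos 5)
Your proof is correct and follows essentially the same route as the paper: in the non-decreasing case you take the coordinatewise maximum of the two fiber-size (block-length) tuples and realize the common string via Lemma~\ref{lem2}, which is exactly the paper's construction. The only difference is cosmetic: for the $\mathcal{D}$ case the paper asserts that ``a nearly identical argument'' works, while you make this precise by conjugating with the order-reversing automorphism $i\mapsto k-1-i$ of $\fl{k}$ to reduce to the non-decreasing case, which is a clean and valid way to discharge that step.
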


\begin{proof}
Suppose that $f,g \in \mathcal{K}_{\textrm{inc}}$. As $f$ and $g$ are non-decreasing, they have the following form:
\[\str(f)= \underbrace{0 0 \cdots 0}_{f_0 \textrm{ times }}\underbrace{1 1 \cdots 1}_{f_1 \textrm{ times }}\cdots \underbrace{(k-1) (k-1) \cdots (k-1)}_{f_{k-1} \textrm{ times }}\]
\[\str(g)= \underbrace{0 0 \cdots 0}_{g_0 \textrm{ times }}\underbrace{1 1 \cdots 1}_{g_1 \textrm{ times }}\cdots \underbrace{(k-1) (k-1) \cdots (k-1)}_{g_{k-1} \textrm{ times }}\]
We let $m_i=\max\{f_i,g_i\}$ for $0\leq i\leq k-1$ and let
\[m= \underbrace{0 0 \cdots 0}_{m_0 \textrm{ times }}\underbrace{1 1 \cdots 1}_{m_1 \textrm{ times }}\cdots \underbrace{(k-1) (k-1) \cdots (k-1)}_{m_{k-1} \textrm{ times }}\]
Then set $l=\norm{m}$ and apply Lemma \ref{lem2} to find $\tilde{f}$ and $\tilde{g}$ so that $\str(f\circ \tilde{f})=m$ and $\str(g \circ \tilde{g})=m$. 

A nearly identical argument proves the claim for $f,g \in \mathcal{D}$.
\end{proof}

An \emph{interval} in $\fl{m}$ is just a subset of the form $\{a \in \fl{m} \ :\ i\leq a \leq j\}$ for some $0\leq i\leq j\leq m-1$ and we denote such an interval by $[i,j]$. For an interval $I \subseteq \fl{n}$, let $\max(I)$ be the greatest integer such that $\max(I) \in I$ and let $\min(I)$ be the least integer such that $\min(I) \in I$. We write $I<J$ for intervals $I, J$ if $\max(I)<\min(J)$.

\begin{proof}[Proof of Theorem]\ref{thm1}
Let $f:\fl{m}\to\fl{k}$ and $g:\fl{n}\to\fl{k}$ be morphisms. We want to find $f', g' \in \mathcal{K}$ with common domain such that $f\circ f'=g\circ g'$.

We claim that it is enough to prove the Theorem for morphisms $f$ and $g$ satisfying the condition below:

A morphism $h:\fl{m} \to \fl{k}$ has condition $(*)$ if there exist $I_1<I_2<\cdots <I_{l_1}$, disjoint intervals of $\fl{m}$ so that $\bigcup_{j=1}^{l_1} I_j =\fl{m}$ and for each $j$, $h \restriction I_j:I_j \to \fl{k}$ is either a non-decreasing surjection or non-increasing surjection. Suppose that $f,g \in \mathcal{K}$. The fact that $f \in \mathcal{K}$ implies that there is $0=i_0<i_1<\cdots <i_k=m-1$ as in Definition \ref{defn1}. Then, consider the string 
\begin{align*}
s &=((\str(f))\restriction_0^{i_1-1})^\frown f(i_1)f(i_1)^\frown ((\str(f))\restriction_{i_1+1}^{i_2-1})^\frown \\ 
&\qquad ^\frown f(i_2)f(i_2)^\frown\cdots^\frown f(i_{k-1})f(i_{k-1})^\frown ((\str(f))\restriction_{i_{k-1}+1}^{m-1}) 
\end{align*}
and by Lemma \ref{lem2}, find $\phi$ so that $\str(f \circ \phi)=s$. It is easy to see that $f \circ \phi$ has condition $(*)$. We do the same with $g$ to produce $g \circ \psi$ with the property above. Then, notice that amalgamating over $f \circ \phi$ and $g \circ \psi$ produces an amalgamation over $f$ and $g$ as desired.

So from now on we assume $f$ and $g$ have $(*)$. Divide $\fl{m}$ into intervals $I_1<I_2< \ldots< I_{l_1}$ with $\bigcup_{j=1}^{l_1}I_j=\fl{m}$ so that for each $j$, $f\restriction_{I_j}:I_j \to \fl{k}$ is a non-decreasing surjection or a non-increasing surjection. Similarly subdivide $\fl{n}$ into intervals $J_1,\ldots , J_{l_2}$ for $g$.

Let $f_j:\fl{\norm{I_j}} \to \fl{k}$ be the function given by $f_j(i)= f(i+\min(I_j))$ and let $g_k:\fl{\norm{J_k}} \to \fl{k}$ be given by $g_k(i)= g(i+\min(J_k))$. Define $f_{-j}: \fl{\norm{I_j}} \to \fl{k}$ as $f_{-j}(i)= f(\max(I_j)-i)$ and $g_{-k}:\fl{\norm{J_k}} \to \fl{k}$ by $g_{-k}(i)= g(\max(J_k)-i)$. For any $j,k$, $f_j,f_{-j},g_k,g_{-k} \in \mathcal{K}_{\textrm{inc}}\cup \mathcal{D}$. When $j,k\in\Z$ are such that $f_j$ and $g_k$ are both in $\mathcal{K}_{\textrm{inc}}$ or both in $\mathcal{D}$, we let $p_{j,k} \in \N$ and let $f_{j,k}':\fl{p_{j,k}}\to \fl{\norm{I_j}}$ and $g_{j,k}':\fl{p_{j,k}}\to \fl{\norm{J_k}}$ be non-decreasing morphisms such that
\[f_j \circ f_{j,k}' = g_k \circ g_{j,k}'\]
The existence of $p_{j,k}, f_{j,k}'$, and $g_{j,k}'$ is by Lemma \ref{lem3}. 

We define two strings $\alpha$ and $\beta$ as follows. Let $a_1= 1,2,\ldots, l_1$ and $a_2=-l_1,-l_1+1, \ldots -1$ and let $b_1= 1,2,\ldots, l_2$ and $b_2=-l_2,-l_2+1, \ldots -1$. We refer to $a_1, a_2, b_1,$ and $b_2$ as \emph{blocks}. Then 
\[\alpha= a_1^\frown a_2^\frown a_1^\frown a_2 \cdots^\frown a_i\]
where there are $l_2$ blocks in the concatenation above and $i \in \{1,2\}$ depending on the parity of $l_2$. Also
\[\beta= b_1^\frown b_2^\frown b_1^\frown b_2 \cdots^\frown b_i\]
where there are $l_1$ blocks in the concatenation above. Note that $\norm{\alpha}=\norm{\beta}=l_1l_2$.

Further for all $1 \leq j\leq l_1l_2$, $f_{\alpha(j)}$ and $g_{\beta(j)}$ are both in $\mathcal{K}_{\textrm{inc}}$ or both in $\mathcal{D}$. This can be seen via two observations: (1) $f_{\alpha(1)}=f_1$ and $g_{\beta(1)}=g_1$ are both in $\mathcal{K}_{\textrm{inc}}$ and 
(2) for all $j$, and $i \in \{f,g\}$ $i_{\alpha(j)}\in \mathcal{K}_{\textrm{inc}} \implies i_{\alpha(j+1)}\in \mathcal{D}$ and $i_{\alpha(j)}\in \mathcal{D} \implies i_{\alpha(j+1)}\in \mathcal{K}_{\textrm{inc}}$. It is easy to see that (2) holds so long as $j$ and $j+1$ are in the same block. If $j+1$ is in a different block from $j$, then note that $\alpha(j+1)=-\alpha(j)$ and so (2) follows. 

We set 
\[L=\sum_{1\leq j\leq l_2l_2}p_{\alpha(j),\beta(j)}\]
and note that $\fl{L}$ is naturally formed as the disjoint union of $l_1l_2$ intervals of the form:
\[M_j=\left[\sum_{0\leq i\leq j-1}p_{\alpha(i),\beta(i)}, \sum_{1\leq i\leq j}p_{\alpha(i),\beta(i)}-1\right]=:[l_j,r_j]\]
where we set $p_{\alpha(0),\beta(0)}=0$. 

We now define $f':\fl{L}\to \fl{m}$ interval-by-interval as follows:

For $i \in M_1$, we let $f'(i)=f'_{\alpha(1),\beta(1)}(i)$. Suppose $f'$ is defined on $M_i$ for all $i \leq j-1$. If $\alpha(j)>0$, then for $i \in M_j$, we let $f'(i)= f'_{\alpha(j),\beta(j)}(i-l_j)+f'(r_{j-1})+1$. If $\alpha(j)<0$, then for $i \in M_j$, we let $f'(i)=f'(r_{j-1})-f'_{\alpha(j),\beta(j)}(i-l_j)-1$.

And similarly, we define $g':\fl{L}\to \fl{n}$ interval-by-interval: For $i \in M_1$, we let $g'(i)=g'_{\alpha(1),\beta(1)}(i)$. Suppose $g'$ is defined on $M_i$ for all $i \leq j-1$. If $\alpha(j)>0$, then for $i \in M_j$, we let $g'(i)= g'_{\alpha(j),\beta(j)}(i-l_j)+g'(r_{j-1})+1$. If $\alpha(j)<0$, then for $i \in M_j$, we let $g'(i)=g'(r_{j-1})-g'_{\alpha(j),\beta(j)}(i-l_j)-1$.

This choice of $f'$ and $g'$ work; one can verify that $f',g'\in \mathcal{K}$ and $f\circ f'=g\circ g'$.
\end{proof}

\section{The Fraissé limit of $\mathcal{K}$}\label{sec4fraisselimit}

\subsection{Enlarging category $\mathcal{K}$}
We first must define an enlargement of the category $\mathcal{K}$ to include possible limit objects. This type of enlargement is used in \cite{pan-sol} and we follow the development from that paper. A \emph{topological pointed graph},  $(X, R^X, c^X)$, consists of a compact, metrizable, zero-dimensional domain space, $X$; a reflexsive, and symmetric relation $R^X \subseteq X^2$ which is a closed subset of $X^2$; and a designated point $c^X \in X$. Epimorphisms between topological pointed graphs must take edges to edges, preserve the designated point, be surjective on vertives and edges, and moreover must be continuous. Of course, finite pointed linear graphs (with the discrete topology) and epimorphisms as defined earlier for finite pointed linear graphs are examples of topological pointed graphs and epimorphisms respectively.

We will be interested only in a proper subset of topological pointed graphs and of morphisms between them. Loosely, we want those structures which are ``approximated" by the category $\mathcal{K}$ and therefore are possible limit objects for $\mathcal{K}$ and subcategories of $\mathcal{K}$. Define the category $\mathcal{K}^\omega$ as follows. If $(K_n, f_n^{n+1})$ is a sequence consisting of finite pointed linear graphs, $K_n$, and where each $f_n^{n+1}:K_{n+1} \to K_n$ is in $\mathcal{K}$, then there is a natural way in which to view $\mathbb{K}:=\varprojlim (K_n, f_n^{n+1})$ as a topological pointed graph. The topology on $\bK$ is the compact, zero-dimensional product topology that arises by taking each $K_n$ as a finite discrete space. The graph relation, $R^{\mathbb{K}}$, defined by 
\[((x_n)_{n\in\N}, (y_n)_{n\in\N}) \in R^{\mathbb{K}} \iff \forall n\in \N \left((x_n,y_n) \in R^{K_n}\right) \] 
is easily seen to be reflexsive and symmetric and the designated point is given by $c^{\mathbb{K}}=(c^{K_n})_{n\in\N}$.
Now, $\mathcal{K}^\omega$ contains all topological pointed graphs of this kind: those which arise as inverse limits of sequences of finite pointed linear graphs with bonding maps from $\mathcal{K}$. In particular, $\mathcal{K}$ is contained in $\mathcal{K}^\omega$. Morphisms in $\mathcal{K}^\omega$ are defined as follows: given $\mathbb{K}=\varprojlim (K_n,f_{n}^{n+1})$ and $\mathbb{L}=\varprojlim (L_n,g_n^{n+1})$ in $\K^\omega$, $\phi:\mathbb{K}\to\mathbb{L}$ is a morphism if there is an increasing sequence $i_1<i_2<i_3<\cdots$ of natural numbers and maps $\phi_n:K_{i_n} \to L_n$ with each $\phi_n \in \K$ and so that for each $n$, $\pi^{\bL}_n\circ \phi=\phi_n\circ \pi^{\bK}_{i_n}$ where $\pi^{\bL}_m:\mathbb{L} \to L_m$ is the projection of $\mathbb{L}$ onto its $m$th coordinate and $\pi^{\bK}_m$ is defined analogously for $\mathbb{K}$. It can be easily seen that morphisms $\mathbb{L} \to \mathbb{K}$ in $\K^\omega$ are a proper subset of all epimorphisms between $\mathbb{K} \to \mathbb{L}$ as topological pointed graphs. Notice that if $\mathbb{L}=\varprojlim (L_n, g_n^{n+1}) \in \K^\omega$ and $A \in \K$, then morphisms $\mathbb{L} \to A$ are all maps of the form $f\circ \pi^{\bL}_n$ for some $n\in\N$ and some $f:L_n \to A$ in $\K$. For example, the canonical projections $\pi_n^{\bL}:\mathbb{L} \to L_n$ are morphisms. 

For $\mathbb{L} \in \K^\omega$, we call $\mathbb{L}$ a \emph{pre-continuum} if $R^{\mathbb{L}}$ is transitive (i.e., an equivalence relation). Each pre-continuum has a \emph{quotient}, that is, the space $\mathbb{L} / R^{\mathbb{L}}$ with the quotient topology. This space is a compact metrizable space since the relation $R^\mathbb{L}$ is always a closed relation and the topology on $\bL$ is always compact, metrizable. The terminology of pre-continuum is justified by the following:

\begin{thm}\label{thm2}
A topological space $X$ is a Knaster continuum if and only if $X$ is homeomorphic to the quotient of some pre-continuum in $\K^\omega$.
\end{thm}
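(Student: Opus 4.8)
The plan is to prove both directions of the equivalence by carefully translating between the inverse limit of intervals $\varprojlim(I_n, T_n^{n+1})$ defining a Knaster continuum and the inverse limit of finite pointed linear graphs $\varprojlim(K_n, f_n^{n+1})$ in $\K^\omega$, with the quotient map $\bL \to \bL/R^{\bL}$ collapsing each $R^{\bL}$-class (necessarily an equivalence class, since $\bL$ is a pre-continuum) to a point.

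\textbf{From pre-continuum to Knaster continuum.} Suppose $\bL = \varprojlim(L_n, g_n^{n+1})$ is a pre-continuum in $\K^\omega$. First I would observe that each finite pointed linear graph $L_n$ can be ``realized geometrically'': map $\fl{t}$ to $[0,1]$ by sending vertex $i$ to $i/(t-1)$, and interpolate linearly. Under this realization, a morphism $f: \fl{t} \to \fl{s}$ in $\K$ (which, by Definition \ref{defn1}, goes monotonically up and down between the endpoints $k$ times, hitting $0$ and $m-1$ alternately) is exactly a piecewise-linear open continuous surjection $[0,1] \to [0,1]$ of degree $k$ fixing $0$ — the discrete analog of Lemma \ref{obs0}. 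So from $(L_n, g_n^{n+1})$ I get continuous open surjections $T_n^{n+1}: I \to I$ with $T_n^{n+1}(0) = 0$, hence a Knaster continuum $K' = \varprojlim(I_n, T_n^{n+1})$. The key claim is that $\bL/R^{\bL}$ is homeomorphic to $K'$. The point is that two sequences $(x_n), (y_n) \in \bL$ are $R^{\bL}$-related iff for every $n$, $x_n$ and $y_n$ are equal or adjacent in $L_n$; passing to the geometric realization, this means $|x_n - y_n| \le 1/(t_n - 1) \to 0$ where $t_n = |L_n|$, so their images in $K'$ coincide. Conversely, if the images in $K'$ coincide, the graph-adjacency holds at every level. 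Since $R^{\bL}$ is assumed transitive, the quotient is well-defined and the induced map $\bL/R^{\bL} \to K'$ is a continuous bijection between compact metrizable spaces, hence a homeomorphism. (One must choose the realizations so that the vertex counts $t_n$ genuinely tend to infinity; if they do not, the maps are eventually isomorphisms and the quotient is a single point or an arc — still a Knaster continuum in the degenerate sense, so the statement is unaffected, but I would note this case.)

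\textbf{From Knaster continuum to pre-continuum.} Conversely, let $X = \varprojlim(I_n, T_n^{n+1})$ be a Knaster continuum. By Lemma \ref{obs0}, each $T_n^{n+1}$ has a degree $d_n$ and a partition $0 = x_0 < x_1 < \cdots < x_{d_n} = 1$ into monotone homeomorphic pieces. I would build finite pointed linear graphs $K_n$ and morphisms $f_n^{n+1} \in \K$ approximating $(I_n, T_n^{n+1})$ by choosing finer and finer finite $\frac{1}{k_n}$-nets on each $I_n$ — vertices at the points $j/k_n$ — chosen compatibly so that each $T_n^{n+1}$ restricted to these nets is (close to) a graph morphism of the right degree; this requires choosing the mesh $1/k_{n+1}$ fine enough relative to the modulus of continuity of $T_n^{n+1}$ and relative to the breakpoints $x_i$, and refining so that images of net points are net points. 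One then gets $\bK = \varprojlim(K_n, f_n^{n+1}) \in \K^\omega$, and by the same adjacency-collapsing argument as above, $R^{\bK}$ is transitive (each class is the preimage of a point of $X$, a genuine equivalence relation) and $\bK/R^{\bK} \cong X$.

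\textbf{Main obstacle.} The routine parts are the piecewise-linear bookkeeping and the compactness argument that a continuous bijection is a homeomorphism. The real work is the second direction: arranging the finite nets on the intervals $I_n$ so simultaneously that (i) the restriction of each bonding map $T_n^{n+1}$ to the net is \emph{exactly} a morphism in $\K$ (monotone pieces of the correct degree, endpoints mapped to endpoints with the right parity), rather than merely an approximation, and (ii) the transitivity of $R^{\bK}$ holds — equivalently, that the $R^{\bK}$-classes are exactly the fibers of the projection to $X$ and do not ``spread'' across several fibers. Point (ii) is where one uses that the mesh sizes tend to $0$: if $x_n$ and $y_n$ are adjacent in $K_n$ for all $n$, their distance in $I_n$ is at most the mesh $1/k_n$, which must be controlled so that the limit points are literally equal in $X$; and conversely one needs the nets dense enough to detect coincidence. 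I would handle this by a diagonal/back-and-forth construction of the sequence $(K_n, f_n^{n+1})$, at stage $n$ choosing $k_{n+1}$ large enough to (a) place a breakpoint-respecting net making $T_n^{n+1}$ a morphism and (b) beat $1/n$ in mesh, so that $\sum 1/k_n < \infty$ forces collapse exactly along fibers. This is essentially the argument of \cite{pan-sol} adapted to linear graphs, and I would cite that development for the general framework while giving the Knaster-specific details.
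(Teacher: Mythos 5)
Both directions of your proposal contain genuine gaps, and the first one is an outright error. In the direction ``pre-continuum $\Rightarrow$ Knaster,'' your key step claims that if $(x_n)$ and $(y_n)$ are $R^{\bL}$-related then, since $\norm{x_n-y_n}/(t_n-1)\to 0$, ``their images in $K'$ coincide.'' This does not follow: two threads of an inverse limit are equal if and only if they agree in \emph{every} coordinate, and your vertex-to-vertex map $(x_n)\mapsto\bigl(x_n/(t_n-1)\bigr)$ is injective (and not surjective), so it never identifies distinct $R^{\bL}$-related points and does not descend to $\bL/R^{\bL}$. Coordinatewise closeness tending to $0$ only says the two image points are close, not equal. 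To repair this you would have to replace the map by $x\mapsto\bigcap_n(\pi_n^{K'})^{-1}\bigl(\overline{J_{x_n}}\bigr)$, where $J_{x_n}$ is the union of the vertex intervals adjacent to $x_n$, and then prove these nested sets shrink to a point -- which is not automatic for an arbitrary pre-continuum, since the bonding maps can expand each level's vertex intervals back up to a fixed size. The paper avoids this entirely: it never identifies the quotient with a geometric realization, but instead verifies Krupski's internal characterization of Knaster continua (chainable, property of Kelley, proper subcontinua are arcs, at most two endpoints). That is a structurally different and more robust route for this direction.

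In the converse direction your diagnosis of the difficulty is right, but the fix you propose -- mesh beating $1/n$, or $\sum 1/k_n<\infty$ -- is insufficient to make $R^{\bK}$ transitive or to make the classes equal to fibers. Concretely, take the buckethandle with nets $j/2^n$ on $I_n$, i.e.\ $K_n=\fl{2^n+1}$ with the standard degree-two tent morphisms $\fl{2^{n+1}+1}\to\fl{2^n+1}$: the mesh is $2^{-n}$ and summable, yet the constant threads $a=(0,0,\dots)$, $b=(1,1,\dots)$, $c=(2,2,\dots)$ satisfy $aR^{\bK}b$ and $bR^{\bK}c$ but not $aR^{\bK}c$, and they project to three distinct points of the continuum. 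What is actually needed is the \emph{relative} condition the paper builds into its chains as condition (3): vertices at distance greater than $1$ at level $n$ must have preimages at distance greater than $2$ at level $n+1$, which forces the level-$(n+1)$ mesh to be small compared with $\epsilon_n/\deg(T_n)$ (the separation constant of level $n$ divided by the expansion of the bonding map), not merely small in absolute terms. Two further points: requiring net points $j/k_n$ to map exactly to net points, with the fold points of $T_n^{n+1}$ among them, is impossible for general open bonding maps with irrational fold points, so one must first normalize to standard tent maps (the paper invokes D\k{e}bski's Lemma 4 for exactly this); and the paper works with chains of open sets rather than point nets, which is what lets it quantify separation via the $\epsilon_n$-fineness conditions (6)--(8) and then prove that the nested pullbacks shrink to singletons.
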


For the proof, recall that the \emph{mesh} of an open cover $\mathcal{U}$ of a metric space is given by $\sup \{\textrm{diam}(U) \ : \ U \in \mathcal{U}\}$. We will also need the following folklore/ easy to check fact. If $\mathbb{I}$ is a topological pointed graph formed as the inverse limit of finite pointed linear graphs such that each bonding map is a monotone epimorphism (monotone meaning that pre-images of connected sets are connected) and $R^{\mathbb{I}}$ is an equivalence relation, then $\mathbb{I} / R^{\mathbb{I}}$ is homeomorphic to an arc. Further, if $q: \mathbb{I} \to \mathbb{I} /R^{\mathbb{I}}$ is the quotient map, then $q(c^{\mathbb{I}})$ is an endpoint of the arc $\mathbb{I} / R^{\mathbb{I}}$. An \emph{endpoint} of a continuum $X$ is a point $x \in X$ such that if $A$ is any arc in $X$ with $x \in A$, then $x$ is an endpoint of $A$. 

In the proof below we will at one point mention \emph{the property of Kelley} (a property of continua) but we will never actually use the definition so we do not define it here. For a definition, see \cite{char-roe}, Definition 6.2 on p.23. Following Charatonik and Roe in \cite{char-roe}, we say that an epimorphism between finite linear graphs, $f:B \to A$ is \emph{confluent} if for any connected subset $A' \subset A$, every connected component, $P$, of $f^{-1}(A')$ is such that $f(P)=A'$ (see \cite{char-roe}, Definition 4.1 on p.15).

\begin{proof}
Let $\mathbb{L}=\varprojlim (L_n, g_n^{n+1})$ be a pre-continuum in $\K^\omega$, let $\pi_n:\mathbb{L} \to L_n$ be the projection map on the $n$th coordinate, and let $L=\mathbb{L} /R^{\mathbb{L}}$. We want to show that $L$ is a Knaster continuum. As per a characterization due to Krupski (Theorem 3 of \cite{krupski}), $L$ is a Knaster continuum if:
\begin{enumerate}
    \item $L$ is a chainable continuum
    \item $L$ has the property of Kelley
    \item  every proper subcontinua of $L$ is an arc
    \item $L$ has one or two endpoints
\end{enumerate}

Point (1) above follows from applying Lemma 4.3 of \cite{irw-sol} since any open cover of $\bL$ is refined by some $\pi_n$, which is an epimorphism onto a finite linear graph.

The fact that $L$ has the property of Kelley is a consequence of Theorem 6.5 of \cite{char-roe} along with the observation that every morphism in $\mathcal{K}$ is confluent. Let $K$ be a proper subcontinua of $L$. Then, we have that for each $n$, $\pi_n(q^{-1}(K))$ is an $R^{L_n}$-connected subset of $L_n$. A moment of thought about the definition of the maps in $\mathcal{K}$ checks that $q^{-1}(K)$ must be an inverse limit of connected linear graphs with monotone bonding maps; so $K$ must be an arc. This proves (2) and (3).

To see (4), first we show that $q(c^\mathbb{L})$ is an end point. If $A$ is an arc in $L$ containing $q(c^{\bL})$, then for each $n$, $\pi_n(q^{-1}(A))$ is an $R^{L_n}$-connected subset of $L_n$ containing $c^{L_n}$, i.e., $\pi_n(q^{-1}(A))$ is a finite pointed linear graph. Since each $\pi_n(q^{-1}(A))$ is connected, for all but (possibly) finitely many $n \in \N$, the restriction of $g_n^{n+1}$ to $\pi_{n+1}(q^{-1}(A))$ is a monotone map from $\pi_{n+1}(q^{-1}(A))$ onto $\pi_n(q^{-1}(A))$ that sends $c^{L_{n+1}}$ to $c^{L_n}$. Now it follows that $q(c^{\bL})$ is an endpoint of $A$. If every map $g_n^{n+1}$ has odd degree, then a similar argument shows that the point $q(d)$ where $d(n)$ is the endpoint of $L_n$ not equal to $c^{L_n}$ is also an endpoint. If $x$ is any other point in $L$, then we can show that $x$ is not an endpoint as follows: let $x=q(x')$ for some $x' \in \bL$. There is some $n$ so that $\pi_n(x')$ is not an endpoint of $L_n$. Let $V_n$ be an $R^{L_n}$-connected subset of $L_n$ so that $\pi_n(x') \in V_n$ and each $R^{L_n}$-neighbor of $\pi_n(x') \in V_n$. Let $a$ and $b$ denote the two distinct $R^{L_n}$-neighbors of $\pi_n(x')$ other than $\pi_n(x')$ itself. For $m>n$, let $V_m$ be the connected component of $(g_n^m)^{-1}(V_n)$ containing $\pi_m(x')$. Then: $\overline{V}=\varprojlim (V_n,g_n^{n+1}\restriction V_{n+1})$ is an inverse limit of finite linear graphs with monotone bonding maps; so $q\left(\overline{V}\right)$ is an arc contained in $L$. Because $R^{\bL}$ is transitive, 
\[d_{L^m}((g_n^m)^{-1}(a)\cap V_m,(g_n^m)^{-1}(b)\cap V_m) \to \infty \textrm{ as }m \to \infty \]
and it follows that $x$ is an interior point of  $q\left(\overline{V}\right)$. By the theorem of Krupski, $L$ is a Knaster continuum.

Now, let $M=\varprojlim (I, T_n)$ be a Knaster continuum. By Lemma 4 of \cite{debski}, up to homeomorphism, we may assume that each $T_n:I \to I$ is a standard tent-map of the form:
\[T_n(x)=\begin{cases}
dx & \textrm{ if } x\in \left[\frac{m}{d},\frac{m+1}{d}\right] \textrm{ and $m$ is even}\\
1+m-dx & \textrm{ if } x\in \left[\frac{m}{d},\frac{m+1}{d}\right] \textrm{ and $m$ is odd}\\
\end{cases}\]
where $d$ is the degree of $T_n$. Denote by $\pi^M_n:M \to I$ the projection of $M$ onto it's $n$th coordinate. For each $n \in \N$, we construct a chain $\mathcal{U}^n=\{U^n_0,U^n_1,\ldots ,U^n_{p(n)-1}\}$ on $[0,1]$ so that
\begin{enumerate}
    \item $0\in U^n_0$
    \item $U^n_i \cap U^n_j \neq \emptyset \iff \norm{i-j}\leq 1$
    \item if $\norm{i-j}>1$ and $k,l$ are such that $T_n\left(\overline{U^{n+1}_k}\right) \subset U^n_i$ and \\ $T_n\left(\overline{U^{n+1}_l}\right) \subset U^n_j$, then $\norm{k-l}>2$
    \item for each $i<p(n)$, there exists $k<p(n+1)$ so that $T_n\left(\overline{{U^{n+1}_k}}\right)\subset U^n_i$
    \item $\textrm{mesh}(\mathcal{U}^n)<\frac{1}{n}$
\end{enumerate}

\noindent and so that there exists $\epsilon_n >0$ so that

\begin{enumerate}
\setcounter{enumi}{5}
    \item $d(U^n_i,U^n_j)>\epsilon_n$ if $\norm{i-j}>1$
    \item for each $i$, there is $x \in U^n_i$ so that $d\left(\{x\}, \bigcup_{j\neq i}U^n_j\right)>\epsilon_n$
    \item if $X \subset [0,1]$ with $\textrm{diam}(X)<\epsilon_n$, then there is some $i$ so that $X \subset U^n_i$
\end{enumerate}

A chain satisfying properties (6)-(8) above is called an \emph{$\epsilon_n$-fine chain}. Given any $\delta >0$, it is easy to construct a chain on $[0,1]$ that is $\epsilon$-fine for some $\epsilon$ and so that the diameter of each element of the chain is less than $\delta$. 

We now show how to construct a sequence of chains $\mathcal{U}^n$ satisfying (1)-(6). Let $\mathcal{U}^0$ be a $\epsilon_0$-fine chain on $[0,1]$, for some $\epsilon_0>0$, with elements of the chain labelled so that conditions (1) and (2) are satisfied. Now given chain $\mathcal{U}^n$ and $\epsilon_n >0$ satisfying (1)-(6), we construct chain $\mathcal{U}^{n+1}$ as follows: let $\epsilon_{n+1}$ and $\mathcal{U}^{n+1}$ be such that there is an $\epsilon_{n+1}$-fine chain, $\mathcal{U}^{n+1}$,  on $[0,1]$ so that each element of $\mathcal{U}^{n+1}$ has diameter less than 

\[\min \left\{\frac{1}{n+1}, \frac{\epsilon_n}{6\textrm{deg}(T_n)}\right\}\]
Label the elements of $\mathcal{U}^{n+1}$ so that conditions (1) and (2) are satisfied. Clearly, (5) is satisfied. Notice that for each $k<p(n+1)$:
\[\textrm{diam}\left(T_n\left(\overline{U^{n+1}_k}\right)\right)<\frac{\epsilon_n}{3}\]
(we are using, above, that $T_n$ is a standard tent-map and therefore can expand the length of an interval by at most a factor of $\deg (T_n)$). So, by (7) for $\mathcal{U}^n$, and the fact that $T_n$ is surjective, we must have for each $i<p(n)$, some $k<p(n+1)$ with $T_n\left(\overline{U^{n+1}_k}\right)\subset U^n_i$--i.e, condition (4) is satisfied for $\mathcal{U}^{n+1}$. Condition (6) for $\mathcal{U}^n$ implies that (3) holds for $\mathcal{U}^{n+1}$. This concludes the construction.

Let $p(n)$ be the number of elements of chain $\mathcal{U}^n$. For $n \in \N$, let $A_n$ be a pointed linear graph with $p(n)$ vertices. We think of $A_n$ as having its vertices labelled by $0,1, \ldots , p(n)-1$ so that $c^{A_n}$ is vertex zero. Define $g_n:A_{n+1} \to A_n$ by
\[g_n(i)=\min \left\{j : T_n\left(\overline{U^{n+1}_i}\right)\subset U^n_j\right\}\]

Such a $j$ exists since $\textrm{diam} \left(T_n\left(\overline{U^{n+1}_i}\right)\right)<\frac{\epsilon_n}{3}$ and by condition (8) for chain $\mathcal{U}^n$. Clearly, $g_n(c^{A_{n+1}})=c^{A_n}$. Condition (4) implies that $g_n$ is surjective on vertices and condition (5) implies that $g_n$ is surjective on edges. The fact that $\textrm{diam} \left(T_n\left(\overline{U^{n+1}_i}\right)\right)<\frac{\epsilon_n}{3}$ and conditions (2) and (7) imply that $g_n$ preserves edges. So, $g_n$ is an epimorphism. Lemma \ref{obs0} for $T_n$ gives that $g_n \in \K$. 

Let $\mathbb{M}=\varprojlim (A_n,g_n)$. First, we will show that condition (3) implies that for any $a \in \bM$ there is at most one element $b \in \bM$ so that $a \neq b$ and $(a,b) \in R^{\bM}$. Assume for contradiction that $a,b,c \in \bM$ are distinct so that $(a,b) \in R^{\bM}$ and $(a,c) \in R^{\bM}$. Then, there is some $n \in \N$ so that $a(n), b(n)$, and $c(n)$ are  pairwise disjoint. Since $L_n$ is a linear graph, it follows that $\norm{b(n)-c(n)}>1$. So, by condition (3), $\norm{b(n+1)-c(n+1)}>2$ which implies that either $(a(n+1), b(n+1)) \notin R^{L_n}$ or $(a(n+1),c(n+1)) \notin R^{L_n}$, a contradiction. Since any element of $\bM$ is $R^{\bM}$-connected to at most one element (other than itself) of $\bM$, we have that $R^{\bM}$ is transitive.  

We claim that $\mathbb{M}/R^{\mathbb{M}}$ is homeomorphic to $M$. Define $\phi:\mathbb{M} \to M$ as follows. Given $x \in \mathbb{M}$, let 
\[\phi(x)=\bigcap_{n\in\N} (\pi^{M}_n)^{-1}\left(\overline{U^n_{x(n)}}\right)\]

Note that the cover $\{ (\pi^M_n)^{-1}(U)\ : \ U \in \mathcal{U}^n\}$ has mesh at most $\frac{1}{2^n}$. 

The sequence $\left((\pi^{\bM}_n)^{-1}\left(\overline{U^n_{x(n)}}\right)\right)_{n\in\N}$ is a sequence of nested compact sets whose diameters converge to zero, so there is a unique point in the intersection above. We claim that:
\begin{enumerate}
    \item $x R^{\mathbb{M}}y \iff \phi(x)=\phi(y)$
    \item $\phi$ is continuous
    \item $\phi$ is onto
\end{enumerate}
and from points (1)-(3) and the fact that $\mathbb{M}/R^{\mathbb{M}}$ is compact and $M$ is Hausdorff it follows that $\phi$ descends to a homeomorphism $\mathbb{M}/R^{\mathbb{M}} \to M$. 

To see (1), assume first that $x$ and $y$ are such that $(x,y) \in R^{\bM}$. It follows that for each $n$, $U^n_{x(n)}$ and $U^n_{y(n)}$ intersect. Notice that the sequence
\[\left((\pi_n^{\bM})^{-1}\left(\overline{U^n_{x(n)} \cup U^n_{y(n)}}\right)\right)_{n\in\N}\]
is a sequence of nested compact sets whose diameters converge to zero and that $\phi(x)$ and $\phi(y)$ are both in the intersection of this sequence of compact sets by the definition of $\phi$. It follows that $\phi(x)=\phi(y)$. If $(x,y) \notin R^{\bM}$, then there is some $n$ so that $\norm{x(n)-y(n)} >1$ and so $\overline{U^n_{x(n)}} \cap \overline{U^n_{y(n)}} =\emptyset$ by (3). Then, $\phi(x) \in (\pi^{\bM}_n)^{-1}(\overline{U^n_{x(n)}})$ and $\phi(y) \in (\pi^{\bM}_n)^{-1}(\overline{U^n_{y(n)}})$ and $(\pi^{\bM}_n)^{-1}(\overline{U^n_{x(n)}}) \cap (\pi^{\bM}_n)^{-1}(\overline{U^n_{y(n)}})=\emptyset$ so $\phi(x) \neq \phi(y)$. Point (2) follows essentially from the definition of the map $\phi$. 

To see (3), let $z \in M$. Consider the tree of all sequences of the form \\ $(s_0,\ldots, s_{n-1})$ where for all $i\leq 0$: $s_i \in A_i$, $g_{i}^{i+1}(s_{i+1})=s_i$, and $z(i) \in \overline{U}^i_{s_i}$. The partial order on the tree is given $s \preceq t$ if $t$ extends $s$. Then, it is easy to see that this tree is finitely branching and countably infinite; so Koenig's lemma implies that there is an infinite branch of the form $s_0, s_1,\ldots $. Note that $s$ defined by $s(n)=s_n$ is an element of $\bM$ and $z(n) \in U^n_{s(n)}$ for each $n$ which implies that $\phi(s)=z$.
\end{proof}

\subsection{Universal Knaster continuum}

The idea of classical Fraissé theory is that given a suitably rich category of finite structures and embeddings (where suitably rich means satisfies an injective amalgamation property), there is a unique ultrahomogeneous limit object: a countable structure so that every member of the category has an arrow into the limit object and every arrow between finite subobjects of the limit extends to an automorphism of the full limit. In the dual setting, the limit satisfies analogous properties with the arrow directions ``swapped." We will follow \cite{irw-sol} and \cite{pan-sol} for the development of projective Fraissé limits. 

\begin{thm}[Theorem 3.1 of \cite{pan-sol}]\label{thm3}
If $\mathcal{F}$ is a projective Fraissé family of finite (pointed) graphs, then there exists a unique up to isomorphism \\ (pointed) topological graph $\mathbb{F} \in \mathcal{F}^\omega$ so that 
\begin{enumerate}
    \item for each $A \in \mathcal{F}$, there is a morphism in $\mathcal{F}^\omega$ from $\mathbb{F} \to A$
    \item for $A,B \in \mathcal{F}$ and morphisms $f:\mathbb{F} \to A$ and $g:B \to A$, \\ 
    $f,g\in \mathcal{F}^\omega$, there is a morphism $h:\mathbb{F} \to B$ so that $g\circ h =f$
\end{enumerate}
\end{thm}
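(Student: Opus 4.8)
The plan is to carry out the standard dualization of Fra\"iss\'e's theorem: to build $\mathbb{F}$ I would construct a sufficiently generic inverse sequence of members of $\mathcal{F}$ using the joint-projection and projective amalgamation properties, and to get uniqueness I would run a back-and-forth argument between two such sequences. Throughout I would use two elementary facts about $\mathcal{F}^\omega$: that a morphism (in $\mathcal{F}^\omega$) from an object of $\mathcal{F}^\omega$ into a finite structure $A\in\mathcal{F}$ factors as $\bar f\circ\pi_n$ for some canonical projection $\pi_n$ and some $\bar f$ in $\mathcal{F}$ (this is the observation made just before Theorem \ref{thm2}), and that the canonical projections $\pi_n$ are surjective, which holds by compactness since all bonding maps are surjective.

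For existence I would build an inverse sequence $A_0 \leftarrow A_1 \leftarrow A_2 \leftarrow \cdots$ of members of $\mathcal{F}$ with bonding maps $p_n^{n+1}:A_{n+1}\to A_n$ in $\mathcal{F}$, writing $p_n^m=p_n^{n+1}\circ\cdots\circ p_{m-1}^m$. Since $\mathcal{F}$ has countably many isomorphism types and all its structures are finite, there are only countably many pairs $\bigl(n,[B,g]\bigr)$, where $B\in\mathcal{F}$ and $g:B\to A_n$ is a morphism, taken up to isomorphism over $A_n$; fix a bookkeeping enumeration listing every such pair at some stage past $n$, and fix an enumeration of $\mathcal{F}$. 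At stage $k$ I perform one of two steps. In an \emph{amalgamation step} handling a pair $\bigl(n,[B,g]\bigr)$ with $n\le k$: apply the projective amalgamation property to $g:B\to A_n$ and $p_n^k:A_k\to A_n$ to get $A_{k+1}\in\mathcal{F}$ with morphisms $p_k^{k+1}:A_{k+1}\to A_k$ and $q:A_{k+1}\to B$ satisfying $g\circ q=p_n^k\circ p_k^{k+1}=p_n^{k+1}$. In a \emph{joint-projection step} handling some $C\in\mathcal{F}$: apply the joint-projection property to $A_k$ and $C$ to obtain $A_{k+1}$ with morphisms onto both. Interleave the two kinds of steps so each pair and each $C$ is eventually treated, and set $\mathbb{F}=\varprojlim(A_n,p_n^{n+1})\in\mathcal{F}^\omega$. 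Property (1) is immediate from the joint-projection steps. For property (2), given morphisms $f:\mathbb{F}\to A$ and $g:B\to A$ with $A,B\in\mathcal{F}$, factor $f=\bar f\circ\pi_n$ with $\bar f:A_n\to A$ in $\mathcal{F}$, apply projective amalgamation to $\bar f$ and $g$ to get $D\in\mathcal{F}$ with $u:D\to A_n$, $v:D\to B$ and $g\circ v=\bar f\circ u$; the pair $(n,[D,u])$ is treated at some stage $k\ge n$, producing $h':A_{k+1}\to D$ with $u\circ h'=p_n^{k+1}$, and then $h:=v\circ h'\circ\pi_{k+1}:\mathbb{F}\to B$ satisfies $g\circ h=\bar f\circ u\circ h'\circ\pi_{k+1}=\bar f\circ p_n^{k+1}\circ\pi_{k+1}=\bar f\circ\pi_n=f$.

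For uniqueness, suppose $\mathbb{F}=\varprojlim(A_n,p_n^{n+1})$ and $\mathbb{G}=\varprojlim(B_n,q_n^{n+1})$ both satisfy (1) and (2), with projections $\pi_n,\rho_n$. I would build, by back-and-forth, strictly increasing sequences $(n_i),(m_i)$ and a commuting ladder
\[
A_{n_0} \longleftarrow B_{m_0} \longleftarrow A_{n_1} \longleftarrow B_{m_1} \longleftarrow A_{n_2} \longleftarrow \cdots
\]
of morphisms in $\mathcal{F}$ in which the composite of any two consecutive maps equals a bonding map $p_{n_i}^{n_{i+1}}$ of $\mathbb{F}$ or $q_{m_i}^{m_{i+1}}$ of $\mathbb{G}$. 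The first rung comes from property (1) for $\mathbb{G}$ (a morphism $\mathbb{G}\to A_{n_0}$, factored as $B_{m_0}\to A_{n_0}$); each later rung is one application of property (2): given the current rung $r:B_{m_0}\to A_{n_0}$, feed $r$ together with $\pi_{n_0}:\mathbb{F}\to A_{n_0}$ into property (2) for $\mathbb{F}$ to get $\mathbb{F}\to B_{m_0}$, factor it through some $\pi_{n_1}$ with $n_1>n_0$, and use surjectivity of $\pi_{n_1}$ to conclude the triangle commutes exactly; the next rung is produced symmetrically with property (2) for $\mathbb{G}$. Since the sub-tower of $A$'s is cofinal in the ladder and $n_i\to\infty$, the inverse limit of the ladder is isomorphic to $\mathbb{F}$; symmetrically it is isomorphic to $\mathbb{G}$, whence $\mathbb{F}\cong\mathbb{G}$.

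The main obstacle here is essentially bookkeeping rather than mathematics: one must set up the enumeration in the existence step so that every isomorphism type of pair $\bigl(n,[B,g]\bigr)$ is treated at a stage strictly past $n$ (so $A_n$, hence $p_n^k$, already exists), and one must be careful that the chosen pair is realized by an actual morphism into the current $A_n$ only up to isomorphism over $A_n$, so a routine transport across that isomorphism is needed before quoting the earlier amalgamation step. Once the two facts about $\mathcal{F}^\omega$ above are recorded, the amalgamation and joint-projection properties do all the real work and the back-and-forth is the formal dual of the classical Fra\"iss\'e argument.
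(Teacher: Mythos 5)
Your proposal is correct and follows essentially the same route the paper relies on: the paper states this result without proof (citing Theorem 3.1 of \cite{pan-sol} and referring to \cite{irw-sol} for details), and its accompanying description of generic sequences --- condition (1) matching your joint-projection steps and condition (2) your amalgamation steps --- is exactly the construction you carry out, with the standard back-and-forth ladder for uniqueness.
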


First a remark about point (2) of Theorem \ref{thm3} above: this property is sometimes called the \emph{projective expansion property}. Because of the definition of morphisms in $\mathcal{F}^\omega$, point (2) has the following consequence which we refer to as \emph{projective ultrahomogeneity}: for any morphisms $f,g:\mathbb{F} \to A$, there is some $h \in \aut(\mathbb{F})$ so that $f=g\circ h$. The proof of this fact is a simple diagram chase.

The structure $\mathbb{F}$ in Theorem \ref{thm3} is referred to as the \emph{projective Fraisse limit} of the family $\mathcal{F}$. It will be helpful in Section \ref{sec7degreeone} to understand a bit about how this limit is constructed, so we will say a few words about it here. Given a Fraissé family $\mathcal{F}$, we will say that a sequence $A_n$ of objects in $\mathcal{F}$ with $f_n:A_{n+1}\to A_n$ morphisms from $\mathcal{F}$ is a \emph{generic sequence} if the following two conditions hold:
\begin{enumerate}
    \item for any $A \in \mathcal{F}$, there is $A_n$ and $g$ in $\mathcal{F}$ so that $g:A_n \to A$
    \item for any $A,B \in \mathcal{F}$ and $e:A_n \to A$ and $g:B \to A$, there is some $N >n$ and $h:A_N \to B$ so that $g \circ h=e \circ f_n^N$
\end{enumerate}

The inverse limit of any generic sequence is the Fraissé limit of $\mathcal{F}$ (this will be the only fact that we need in Section \ref{sec7degreeone}). The construction of a generic sequence in a Fraissé category uses the amalgamation property repeatedly to build a sequence that is ``fully saturated" with the respect to the morphism types of $\mathcal{F}$ (of which there are only countably many). The paper \cite{irw-sol} develops the construction in detail. In the case that the graph relation on the Fraissé limit, $\mathbb{F}$, is an equivalence relation then we call $\mathbb{F}/R^{\mathbb{F}}$ the \emph{canonical quotient of $\mathcal{K}$}.

From now on, we denote by $\mathbb{K}$ the projective Fraissé limit of the category $\K$ defined in Section \ref{sec3fraisecategory}. 

\begin{prop}\label{prop1}
The canonical quotient of $\mathcal{K}$ is the universal Knaster continuum.
\end{prop}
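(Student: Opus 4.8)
The plan is to verify that the canonical quotient $\mathbb{K}/R^{\mathbb{K}}$ satisfies the properties that characterize the universal Knaster continuum, using the tools already assembled. By Theorem~\ref{thm2}, the Fraïssé limit $\mathbb{K}$ is a topological pointed graph in $\mathcal{K}^\omega$; I would first check that $R^{\mathbb{K}}$ is transitive, so that $\mathbb{K}$ is a pre-continuum and Theorem~\ref{thm2} already gives that $\mathbb{K}/R^{\mathbb{K}}$ is \emph{some} Knaster continuum. Transitivity should follow from the genericity of $\mathbb{K}$: using the projective expansion property, one shows that for any $A_n$ appearing in a generic sequence, there is a later $A_N$ and a morphism $A_N \to A_n$ which ``separates triples'' in the sense that no vertex of $A_N$ has two distinct $R$-neighbors whose images under iterated bonding maps stay close — this is the same mechanism used in the proof of Theorem~\ref{thm2} (condition (3) on the chains there) and it forces $R^{\mathbb{K}}$ to have the property that each point is $R$-related to at most one other point, hence $R^{\mathbb{K}}$ is an equivalence relation. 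One must also rule out the degenerate case where $\mathbb{K}/R^{\mathbb{K}}$ is an arc: since $\mathcal{K}$ contains morphisms of every degree $k \geq 1$ and a generic sequence uses cofinally many of them, infinitely many bonding maps are non-monotone, so the quotient is an indecomposable Knaster continuum.

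The substantive part is to identify \emph{which} Knaster continuum we get, namely the universal one. I would recall (from Section~2 / Dębski's work, or from the standard theory of Knaster continua) the characterization of the universal Knaster continuum $K$ among Knaster continua: it is the one $\varprojlim(I, T_n^{n+1})$ whose sequence of degrees $(\deg T_n^{n+1})_n$ is ``divisibly universal'' — every natural number divides $\deg(T_n^{n+1} \circ \cdots \circ T_m^{m+1})$ for suitable $m > n$, equivalently the supernatural number $\prod_n \deg(T_n^{n+1})$ is divisible by every prime to every power. Equivalently, $K$ openly and continuously surjects onto every Knaster continuum, and this is governed entirely by the divisibility of the composed degrees. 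The plan is then: given a generic sequence $(A_n, g_n^{n+1})$ for $\mathcal{K}$ with $\mathbb{K} = \varprojlim(A_n, g_n^{n+1})$, I translate $\mathbb{K}/R^{\mathbb{K}}$ into inverse-limit-of-intervals form (this is exactly the reverse of the construction in the second half of the proof of Theorem~\ref{thm2}: a pre-continuum in $\mathcal{K}^\omega$ with transitive relation canonically presents its quotient as $\varprojlim(I, T_n^{n+1})$ with $\deg(T_n^{n+1}) = \deg(g_n^{n+1})$, using Lemma~\ref{obs0}), and then I must show the composed degrees $\deg(g_n^N) = \prod_{i=n}^{N-1}\deg(g_i^{i+1})$ realize every natural number as a factor.

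To get divisible universality of the degrees from genericity: fix $n$ and a target $k \in \mathbb{N}$. Consider the object $A_n$ and the morphism $\mathrm{id}: A_n \to A_n$; I want to produce a later stage where the composed degree from that stage down to $A_n$ is divisible by $k$. Take any $B \in \mathcal{K}$ together with a degree-$k$ morphism $e: B \to A_n$ (such $B$ and $e$ exist — e.g. ``fold $A_n$ over itself $k$ times''). Actually the cleaner route uses condition (2) in the definition of a generic sequence with $A := A_n$, the morphism $e := \mathrm{id}_{A_n}: A_n \to A_n$, and $g := $ (the degree-$k$ map $B \to A_n$): genericity yields $N > n$ and $h: A_N \to B$ with $g \circ h = \mathrm{id}_{A_n} \circ g_n^N = g_n^N$. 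Taking degrees, $\deg(g_n^N) = \deg(g)\deg(h) = k \cdot \deg(h)$, so $k \mid \deg(g_n^N)$. Since $n$ and $k$ were arbitrary, the supernatural number $\prod_i \deg(g_i^{i+1})$ is divisible by every natural number, which is exactly the condition pinning down $K$ as the universal Knaster continuum. Combined with the first paragraph (the quotient is a Knaster continuum with the right degree sequence), this proves $\mathbb{K}/R^{\mathbb{K}} \cong K$.

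The main obstacle I anticipate is the bookkeeping in the first paragraph — proving $R^{\mathbb{K}}$ is genuinely transitive directly from the abstract projective expansion property rather than from the concrete chain construction of Theorem~\ref{thm2}. One wants a morphism $\psi: \langle m \rangle \to \langle k \rangle$ in $\mathcal{K}$ with the ``thinning'' property that for any $a \in \langle m \rangle$, at most one other vertex of $\langle m\rangle$ can have the same $\psi$-image-neighborhood collapse — more precisely, one needs that in the generic sequence the preimages of a single vertex, under sufficiently deep bonding maps, are intervals that are pairwise far apart. This is true for the maps in $\mathcal{K}$ because they are (discrete) open surjections and one can always precompose with a map that ``stretches'' before folding (Lemma~\ref{lem2} is the relevant tool), but phrasing it so that genericity delivers it uniformly requires care; I would isolate it as a short lemma (``for every $A \in \mathcal{K}$ there is $B \in \mathcal{K}$ and a morphism $B \to A$ that is injective on $R$-neighborhoods of distance $\geq 3$''), apply condition (2) of genericity to it, and conclude that each point of $\mathbb{K}$ has a unique $R$-partner, hence $R^{\mathbb{K}}$ is an equivalence relation. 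Everything else is routine diagram-chasing and degree arithmetic.
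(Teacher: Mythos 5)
Your proposal is correct in outline, and its first half coincides with the paper's: the paper also reduces transitivity of $R^{\bK}$ to the separation condition extracted from the proof of Theorem \ref{thm2} (for $x,y\in K_n$ at $R$-distance $>1$ there is $N$ with all $f_n^N$-preimages at distance $>2$), and obtains that condition by applying clause (2) of genericity to a single ``stretching'' morphism $g:A\to K_n$ whose fibre over a vertex between $x$ and $y$ is an interval of length $2$ --- exactly the lemma you propose to isolate. Where you genuinely diverge is the identification of the quotient as the \emph{universal} Knaster continuum. The paper never mentions degrees here: given an arbitrary Knaster continuum $L$, it writes $L\cong\bL/R^{\bL}$ for a pre-continuum $\bL=\varprojlim(L_n,g_n^{n+1})$ via Theorem \ref{thm2}, lifts the finite stages through properties (1) and (2) of Theorem \ref{thm3} to get a morphism $\bK\to\bL$, and lets it descend to a continuous (and, with extra care, open) surjection $\bK/R^{\bK}\to L$; universality is then immediate from the definition. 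Your route instead computes the degree sequence of the quotient and appeals to the classification of Knaster continua by supernatural numbers. The degree computation itself is sound --- applying genericity to $e=\mathrm{id}_{A_n}$ and a degree-$k$ morphism $B\to A_n$ does give $k\mid\deg(f_n^N)$ --- but this route needs two inputs the paper does not supply at this point: the re-presentation of $\bK/R^{\bK}$ as $\varprojlim(I,T_n^{n+1})$ with $\deg T_n^{n+1}=\deg f_n^{n+1}$, which the paper only carries out (and only sketches) later in the proof of Lemma \ref{lem4}; and the theorem that a divisibly universal degree sequence pins down the universal Knaster continuum up to homeomorphism (Watkins, Eberhart--Fugate--Schumann), which is a strictly heavier import than the uniqueness fact the paper's argument needs. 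So your proof works, but it outsources the substance to the classification literature, whereas the paper's argument manufactures the defining open surjections directly from the Fra\"iss\'e machinery already in hand.
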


\begin{proof}
Write $\bK=\varprojlim (K_n,f_n^{n+1})$ for $K_n$ a generic sequence. First we need to show that $\mathcal{K}$ has a canonical quotient, i.e., that $R^{\bK}$ is transitive. An inspection of the proof of Theorem \ref{thm2} reveals that $R^{\bK}$ is transitive (in fact, has equivalence classes of size at most 2) so long as the following condition holds: for any $K_n$ and any $x,y \in K_n$ with $d_{R^{K_n}}(x,y) >1$ there is some $N>n$ so that any $z \in (f_n^{N})^{-1}(x)$ and $w \in (f_n^{N})^{-1}(y)$ have the property that $d_{R^{K_n}}(z,w)>2$. Given $x,y \in K_n$, let $g:A \to K_n$ be an increasing morphism such that every $z \in g^{-1}(x)$ and every $w \in g^{-1}(y)$ are so that $d_{R^{A}}(z,w)>2$ (this is easy to arrange simply by ensuring that $g^{-1}(b)$ is an interval of length 2 for some $b$ that is between $x$ and $y$ in $K_n$). Apply property (2) of a generic sequence to find some $A_N$ and $h:A_N \to A$ with $g \circ h=f_n^N$ and note that this $N$ has the property desired.

Let $L$ be any Knaster continuum; by Theorem \ref{thm2}, there is 
\[\mathbb{L}=\varprojlim(L_n,g_n^{n+1}) \in \K^\omega\]
so that $L$ is homeomorphic to $\mathbb{L}/R^{\mathbb{L}}$. By property (1) in Theorem \ref{thm3} of projective Fraissé limits, there is a morphism $\phi_1:\mathbb{K} \to L_1$. By property (2), there is a morphism $\phi_2:\mathbb{K} \to L_2$ so that $g_1^2\circ \phi_2=\phi_1$. Continuing on in this way, we produce for each $n \in \N$, $\phi_n:\mathbb{K} \to L_n$ so that $g_n^{n+1} \circ \phi_{n+1}=\phi_n$. Then, define $\phi:\mathbb{K} \to \mathbb{L}$ by $\pi^{\mathbb{L}}_n(\phi(x))=\phi_n(x)$ for each $x\in\mathbb{K}$ and notice that $\phi$ is a morphism. As each $\phi_n$ sends edges to edges, $\phi$ sends edges in $\mathbb{K}$ to edges in $\mathbb{L}$. So $\phi$ descends to a continuous map $\phi':\mathbb{K} /R^{\mathbb{K}} \to \mathbb{L}/R^{\mathbb{L}}$.
\end{proof}

With some extra care, one can show that the map constructed in Proposition \ref{prop1} is open as well.

\section{Density of $\aut(\bK)$ in $\Homeo(K)$}\label{sec5density}
Proposition \ref{prop1} pushes the universality of the projective Fraissé limit (point (1) in Theorem \ref{thm3}) to the quotient of the limit. Using approximation results of D\k{e}bski, we can push projective ultrahomogeneity to the quotient of the limit. This gives an approximate ultrahomogeneity property for the universal Knaster continuum and more important-- establishes that $\aut(\bK)$ is dense in $\Homeo(K)$. 

From now on, let $q:\bK \to K$ denote the quotient map. We define a map $\Phi:\aut(\bK) \to \Homeo(K)$. Any automorphism $f:\bK \to \bK$ gives rise to a homeomorphism, $\Phi(f)$, of $(K)$ defined by
\[\Phi(f)(q(x))=q(f(x))\]
and further, $\Phi$ is a continuous embedding of $\aut(\bK)$ into $\Homeo(K)$. When $X$ is a compact space, we always take $\Homeo(X)$ with the topology induced by the supremum metric. We take $\aut(\bK)$ with the topology induced by considering it as a subgroup of the group of homeomorphisms of the topological space $\bK$. 

\begin{thm}\label{thm4.5}
Let $f,g: K \to L$ be continuous open surjections onto a Knaster continuum $L$ and let $\epsilon >0$. Then, there exists $h \in \aut(\bK)$ so that 
\[\dsup (f \circ \Phi(h),g) <\epsilon.\] 
\end{thm}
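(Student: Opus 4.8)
The plan is to realize $f$ and $g$ as (approximately) quotients of morphisms from $\bK$ to a common finite approximation of $L$, and then invoke projective ultrahomogeneity of the Fraiss\'e limit. Fix $\epsilon > 0$. The key approximation input is D\k{e}bski's machinery (Section 6), which I would use in the following form: given a continuous open surjection $f : K \to L$ between Knaster continua, there is a presentation $\bL = \varprojlim(L_n, g_n^{n+1}) \in \K^\omega$ with $L \cong \bL/R^{\bL}$, and for each $\delta > 0$ a level $n$ and a morphism $\tilde f : \bK \to L_n$ in $\K^\omega$ such that the induced map $q_{L_n} \circ \tilde f$ is $\delta$-close (in $\dsup$) to the composition of $f$ with the projection $L \to \bL \to L_n/R^{L_n}$; equivalently, $f$ is $\delta$-approximated by a map factoring through the finite graph $L_n$. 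One does the same for $g$, and after passing to a common (large enough) level $n$ and composing with bonding maps, one obtains morphisms $\tilde f, \tilde g : \bK \to L_n$ in $\K^\omega$ whose induced quotient maps are each, say, $(\epsilon/3)$-close to $f$ and $g$ respectively, where $L_n$ has mesh small enough (as a chain on $L$) that the quotient of $L_n$ is $(\epsilon/3)$-close to $L$ itself.

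Now both $\tilde f$ and $\tilde g$ are morphisms from $\bK$ into the same finite object $L_n \in \K$. Since $\bK$ is the projective Fraiss\'e limit of $\K$, projective ultrahomogeneity (the consequence of Theorem \ref{thm3}(2) noted in the text) yields $h \in \aut(\bK)$ with $\tilde f = \tilde g \circ h$. Passing to quotients, the homeomorphism $\Phi(h)$ of $K$ satisfies $q_{L_n} \circ \tilde f = q_{L_n} \circ \tilde g \circ h$, i.e. the quotient-level maps associated to $\tilde f$ and to $\tilde g \circ h$ agree exactly on $K$. Since $\Phi$ is defined by $\Phi(h)(q(x)) = q(h(x))$ and $\tilde g$ is a morphism, the map induced by $\tilde g \circ h$ is (the quotient of) $\tilde g$ precomposed with $\Phi(h)$; combining with the two $(\epsilon/3)$-approximations gives
\[
\dsup(f \circ \Phi(h), g) \le \dsup(f\circ\Phi(h), (q_{L_n}\circ\tilde g)\circ \Phi(h)) + \dsup(q_{L_n}\circ\tilde g\circ h,\, g) < \tfrac{\epsilon}{3} + \tfrac{2\epsilon}{3} = \epsilon,
\]
using that $\Phi(h)$ is an isometry-free but uniformly continuous homeomorphism, so the first term is controlled once the approximation of $f$ is taken fine enough relative to a modulus of continuity of $\Phi(h)$ — this forces the approximations to be chosen in the right order.

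The main obstacle is the approximation step: making precise, from D\k{e}bski's results, that an arbitrary continuous open surjection $K \to L$ of Knaster continua can be uniformly approximated by maps factoring through a finite linear graph \emph{inside the category $\K^\omega$} (i.e. by genuine $\K$-morphisms, not merely arbitrary graph epimorphisms), and that $f$ and $g$ can be arranged to land in a \emph{common} finite level. This is where the degree theory and the standard-tent-map normal form from Section 6 do the work: one uses that $K$ itself is presented via chains refining the $L_n$-chains (as in the proof of Theorem \ref{thm2}), so that $f$ and $g$ pull chains on $L$ back to chains on $K$, and these are coherently refined by the projections $\pi^{\bK}_m$. The subtlety to watch is the ordering of quantifiers ($\delta$ depends on a modulus of uniform continuity of the eventual $\Phi(h)$, but $h$ is produced only after the approximation is fixed) — this is handled by noting that $\Phi(h)$ for $h \in \aut(\bK)$ has a modulus of continuity controlled uniformly by the level $n$ (two points of $K$ with the same image in $L_n/R^{L_n}$ are within $2^{-n}$-ish of each other), so one simply chooses $n$ first and $\delta \ll 2^{-n}$ afterward.
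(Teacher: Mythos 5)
Your proposal is correct and follows essentially the same route as the paper: approximate $\pi_n^L\circ f$ and $\pi_n^L\circ g$ via D\k{e}bski's theorem by maps factoring through finite levels, discretize against a fine chain to obtain two $\mathcal{K}$-morphisms from $\bK$ into a common finite linear graph, apply projective ultrahomogeneity to get $h$, and conclude by a triangle inequality (the paper carries out your ``main obstacle'' as Claim \ref{claim1}). The only quibble is that the quantifier subtlety you flag is not really there: since $\Phi(h)$ is a bijection of $K$, precomposition with it preserves $\dsup$ exactly, so no modulus of continuity for $\Phi(h)$ is needed.
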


\begin{proof}
Let $\pi_n^L:L \to I$ be the projection onto the $n$th coordinate. Similarly, $\pi^{(K)}_n:K \to I$ is the projection of $K$ onto its $n$th coordinate. Let $\bK=\varprojlim (K_n,l_n^{n+1})$, where each $K_n \in\mathcal{K}$, and denote by $\pi_n:\bK \to K_n$ the projection of $\bK$ onto its $n$th coordinate. Let $\left(\mathcal{U}^n\right)_{n\in \N}$ be a sequence of open covers of $I$ as in the proof of Theorem \ref{thm2} (satisfying conditions (1)-(8)) for $L$. Fix $n$ be large enough so that  $\left\{(\pi^L_n)^{-1}(U) \ : \ U \in \mathcal{U}^n\right\}$ refines the cover of $L$ by balls of radius $\epsilon$. For ease of notation we let $\mathcal{U}^n =\{U_1,U_2,\ldots,U_N\}$ be labelled as in Theorem \ref{thm2}. Let $\delta >0$ be so that the chain $U_1,U_2,\ldots, U_N$ is $\delta$-fine. Consider $\pi^L_n \circ f :K \to I$; this is a continuous, open map that sends $q(c^{\bK})$ to 0. By a theorem of D\k{e}bski (Corollary on p. 207 of \cite{debski}), there is a sequence $f_i:I \to I$ of continuous open maps mapping 0 to 0 so that $f_i \circ \pi^{(K)}_i$ converges uniformly (in the supremum metric) to $\pi^L_n \circ f$. Let $j >n$ and $f_j:I \to I$ and $g_j:I \to I$ be continuous open maps such that:

\[d_{\sup}\left(\pi_n^L \circ f, f_j \circ \pi_j^{(K)}\right)<\delta\]
and

\[d_{\sup}\left(\pi_n^L \circ g, g_j \circ \pi_j^{(K)}\right)<\delta\]

Let $W_0,W_1,\ldots, W_M$ be a chain on $I$ of mesh less than $\delta'$ where $\delta'$ is chosen, by the uniform continuity of $f_j$, so that 

\[\norm{x-y}<\delta' \implies \norm{f_j(x)-f_j(y)}<\delta\]

\noindent and where the chain is labeled so that $W_i \cap W_j$ if and only if $\norm{i-j} \leq 1$ and $q(c^{\bK}) \in W_0$. Now let $m>j$ so that the cover
\[\left\{\left(\pi_m\right)^{-1}(a) \ :\ a \in K_m\right\}\]
refines the open cover
\[\left\{q^{-1}\left(\left(\pi_j^{(K)}\right)^{-1}\left(W_i\right)\right) \ : \ 0 \leq i\leq M \right\}\]

Define $\tilde{f_j}: K_m \to \fl{n}$ by
\[\tilde{f_j}(a)=\min \left\{i \ : \ f_j \circ \pi_j^{(K)} \circ q \left((\pi_m)^{-1}(a)\right) \subseteq U_i\right\}\]

\begin{claim} \label{claim1}
The map $\tilde{f}_j$ is in $\mathcal{K}$.
\end{claim}

\begin{proof}[Proof of Claim \ref{claim1}]
Notice that 
\[(\pi_m)^{-1}(c^{K_m}) \subset q^{-1}\left(\left(\pi_j^{(K)}\right)^{-1}\left(W_1\right)\right)\]

and that 
\[f_j\left(q^{-1}\left(\left(\pi_j^{(K)}\right)^{-1}\left(W_1\right)\right)\right) \subseteq U_1\]
So, $\tilde{f}_j(c^{K_m})=0$. To see that $\tilde{f}_j$ sends edges to edges, suppose that $a,b \in K_m$ and $(a,b) \in R^{K_m}$; then there exists $a' \in \pi_m^{-1}(a)$ and $b' \in \pi_m^{-1}(b)$ so that $(a',b') \in R^{\bK}$. So $q(a')=q(b')$ which implies that $\pi_j^{(K)}(q(\pi_m^{-1}(a))) \cap \pi_j^{(K)}(q(\pi_m^{-1}(b))) \neq \emptyset$. So
\[\textrm{diam} \left(\pi_j^{(K)}(q(\pi_m^{-1}(a))) \cup \pi_j^{(K)}(q(\pi_m^{-1}(b)))\right) <2\delta'\]
which implies in turn that
\[\textrm{diam} \left(f_j \circ \pi_j^{(K)} \circ q (\pi_m^{-1}(a)) \cup f_j \circ \pi_j^{(K)} \circ q(\pi_m^{-1}(b))\right)<2\delta\]
and so $(\tilde{f}_j(a),\tilde{f}_j(b)) \in R^{\fl{n}}$. The fact that $\tilde{f}_j \in \mathcal{K}$ follows from Lemma \ref{obs0} applied to the continuous open map $f_j$.
\end{proof}

Analogously, we may find $p \in \N$ and morphism $\tilde{g}_j:K_p \to \fl{n}$ so that 

\[\tilde{g}_j(a) =\min \left\{i \ : \ g_j \circ \pi_j^{(K)} \circ q(\pi_p^{-1}(a)) \subset U_i\right\}\]

Projective ultrahomogeneity of $\bK$ (see the remarks following Theorem \ref{thm3}) implies that there exists $h \in \aut(\bK)$ such that $\tilde{f_j}\circ \pi_m \circ h=\tilde{g_j}\circ \pi_p$. Now it is easy to check that $d_{\textrm{sup}}(f\circ \Phi(h),g)<\epsilon$.
\end{proof}

The following corollary is most important to us as we are interested in studying the extent to which the dynamics of $\aut(\bK)$ controls the dynamics of $\Homeo(K)$. 
\begin{cor}\label{cor1}
The image of $\Phi$ is dense in $\Homeo (K)$.
\end{cor}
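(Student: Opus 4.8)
The plan is to derive Corollary~\ref{cor1} directly from Theorem~\ref{thm4.5} by taking $L = K$. Given any $g \in \Homeo(K)$ and any $\epsilon > 0$, I want to produce $h \in \aut(\bK)$ with $\dsup(\Phi(h), g) < \epsilon$, which says precisely that $\Phi(h)$ lies in the $\epsilon$-ball around $g$ in $(\Homeo(K), \dsup)$; since $g$ and $\epsilon$ are arbitrary, this exhibits the image of $\Phi$ as dense.

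First I would observe that a homeomorphism $g : K \to K$ is in particular a continuous open surjection onto the Knaster continuum $L := K$. The identity map $\mathrm{id}_K : K \to K$ is likewise a continuous open surjection onto $K$. Thus Theorem~\ref{thm4.5}, applied with $f := \mathrm{id}_K$, $g := g$, and this $\epsilon$, yields $h \in \aut(\bK)$ with
\[
\dsup(\mathrm{id}_K \circ \Phi(h), g) = \dsup(\Phi(h), g) < \epsilon.
\]
This is the entire content; the only thing to check is that the hypotheses of Theorem~\ref{thm4.5} are genuinely met, namely that $K$ is a Knaster continuum (it is, by definition, as the universal Knaster continuum, and also by Proposition~\ref{prop1} it is the canonical quotient of $\mathcal{K}$, so $q : \bK \to K$ is exactly the quotient map used in defining $\Phi$) and that $\mathrm{id}_K$ and $g$ are both continuous open surjections onto it, which is immediate.

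There is essentially no obstacle here; the corollary is a direct specialization of the theorem. The one point worth a sentence of care is simply recording that $\Phi$ is well-defined and that its codomain is $\Homeo(K)$ with the supremum metric, so that "$\dsup(\Phi(h), g) < \epsilon$ for all $\epsilon > 0$ and all $g$" is literally the statement that $\Phi[\aut(\bK)]$ is dense in $\Homeo(K)$; both of these facts were set up just before Theorem~\ref{thm4.5}. So the proof is a two-line invocation: fix $g \in \Homeo(K)$ and $\epsilon > 0$, apply Theorem~\ref{thm4.5} with $f = \mathrm{id}_K$ to get $h \in \aut(\bK)$ with $\dsup(\Phi(h), g) < \epsilon$, and conclude.
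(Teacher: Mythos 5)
Your proof is correct and is exactly the intended derivation: the paper states the corollary as an immediate consequence of Theorem~\ref{thm4.5}, obtained by specializing to $L=K$ and $f=\mathrm{id}_K$ so that $\dsup(\Phi(h),g)<\epsilon$ for arbitrary $g\in\Homeo(K)$ and $\epsilon>0$. Nothing further is needed.
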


\section{The notion of degree}\label{sec6degree}

In this section, we discuss an important definition, due to D\k{e}bski, of \emph{degree} of homeomorphisms of Knaster continua. This will be necessary later on when we compute the universal minimal flow of $\Homeo(K)$. As $K$ is a Knaster continuum, we can write $K=\varprojlim (I,t_n^{n+1})$ where each $I_n=I$ and each $t_n^{n+1}$ is a standard tent-map. Let $f:K \to K$ be a homeomorphism. D\k{e}bski proves (\cite{debski}, Corollary on p.207) that there is a sequence of continuous, open maps $f_i: I_i \to I$ so that 
\[f_i \circ \pi_i^{K} \rightarrow \pi_1^{K} \circ f\]
where the convergence above is with respect to the supremum metric on the space $C(K,I)$ of continuous maps $K \to I$. Further, the sequence
\[\frac{\textrm{deg}(f_i)}{\textrm{deg}(t_1^i)}\]
is eventually constant (Lemma 7 of \cite{debski}) and in fact independent of the  sequence $f_i$ chosen to approximate $\pi_1^K \circ f$. We let
\[\textrm{deg}(f):=\lim_{i\to \infty} \frac{\textrm{deg}(f_i)}{\textrm{deg}(t_1^i)} \]
and note that $\textrm{deg}(f)$ is a positive rational number.

Lemmas 11 and 12 of \cite{debski} tell us that for $f,g \in \Homeo(K)$, $\deg (f \circ g)=\deg (f)\deg(g)$ and that $\deg(\textrm{id}_{(K)})=1$. In particular, 
\[\deg:\Homeo(K) \to \Q^\times\]
is a group homomorphism, where $\Q^\times$ is the group of positive rationals with multiplication. Note that $\Q^\times$ is a free abelian group with countably many generators (the generators are the primes). Lemma 9 of the same paper implies that in fact $\deg$ is continuous when $\Homeo(K)$ is taken with the topology induced by the supremum metric and $\Q^\times$ with the discrete topology.



We will at this point define an analogous definition of degree for automorphisms of $\bK=\varprojlim (K_n,f_n^{n+1})$. Suppose that $g:\bK \to \bK$ is an automorphism. By definition of morphisms in $\mathcal{K}^\omega$, there exists $i_1>0$ and $g_1:A_{i_1} \to A_1$ with $g_1 \in \mathcal{K}$ such that $\pi_1^{\bK} \circ g=g_1\circ \pi_{i_1}^{\bK}$. Then we define:
\[\deg (g)=\frac{\deg (g_1)}{\deg (f_1^{i_1})}\]
Recall that $\Phi:\aut(\bK) \hookrightarrow \Homeo(K)$ is the map induced by the quotient.

\begin{lem}\label{lem4}
For any $g \in \aut(\bK)$, $\deg (\Phi (g))=\deg (g)$. 
\end{lem}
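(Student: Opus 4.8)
The plan is to unwind both definitions of degree and show they compute the same rational number by tracking a single approximating sequence through the quotient map $q$. Fix $g \in \aut(\bK)$ and write $\bK = \varprojlim(K_n, f_n^{n+1})$. On the algebraic side, pick $i_1 > 0$ and $g_1 : K_{i_1} \to K_1$ in $\mathcal{K}$ with $\pi_1^{\bK} \circ g = g_1 \circ \pi_{i_1}^{\bK}$, so by definition $\deg(g) = \deg(g_1)/\deg(f_1^{i_1})$. On the topological side, $\deg(\Phi(g))$ is computed from any sequence of continuous open maps $h_i : I \to I$ with $h_i \circ \pi_i^K \to \pi_1^K \circ \Phi(g)$, via $\deg(\Phi(g)) = \lim_i \deg(h_i)/\deg(t_1^i)$; by D\k{e}bski this limit is independent of the approximating sequence, so it suffices to produce one convenient sequence.

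The key step is to build that convenient sequence out of the combinatorial data. The quotient map $q : \bK \to K$ intertwines the projections: each $\pi_n^K \circ q$ factors (up to the canonical identification of $K_n / R^{K_n}$ with a chain on $I$, i.e. the setup of Theorem \ref{thm2} giving $K = \varprojlim(I, t_n^{n+1})$ with $\deg(t_n^{n+1}) = \deg(f_n^{n+1})$) through $\pi_n^{\bK}$ composed with the quotient $K_n \to K_n/R^{K_n}$. Since $\Phi(g) \circ q = q \circ g$, applying $\pi_1^K$ and using $\pi_1^{\bK} \circ g = g_1 \circ \pi_{i_1}^{\bK}$ shows that $\pi_1^K \circ \Phi(g) \circ q = q_1 \circ g_1 \circ \pi_{i_1}^{\bK}$ where $q_1 : K_1 \to K_1/R^{K_1} \cong I$ is the quotient. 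The map $g_1 : K_{i_1} \to K_1$ is a $\mathcal{K}$-morphism, hence a discrete version of a degree-$\deg(g_1)$ tent-like open map, and it descends to a continuous open surjection $\bar{g}_1 : I \to I$ (thinking of $K_{i_1}/R^{K_{i_1}}$ and $K_1/R^{K_1}$ as subintervals or via refining chains) with $\deg(\bar{g}_1) = \deg(g_1)$ — this is exactly the content that a $\mathcal{K}$-morphism of degree $k$ limits to an open interval map of degree $k$, which follows from Lemma \ref{obs0} and the relation between the chain structure and the tent-maps $t_n^{n+1}$. Then $h := \bar{g}_1$ (or, more carefully, a sequence $h_i$ obtained by composing $\bar{g}_1$ with the bonding maps $t_{i_1}^i$ for $i \geq i_1$, so $h_i := \bar{g}_1 \circ t_{i_1}^i : I \to I$) satisfies $h_i \circ \pi_i^K = \bar{g}_1 \circ t_{i_1}^i \circ \pi_i^K = \bar{g}_1 \circ \pi_{i_1}^K$, and one checks this equals $\pi_1^K \circ \Phi(g)$ on the nose (not just in the limit), because $q$ is onto and the above factorization identity holds. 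Computing: $\deg(h_i)/\deg(t_1^i) = \deg(\bar{g}_1)\deg(t_{i_1}^i)/\deg(t_1^i) = \deg(g_1)/\deg(t_1^{i_1}) = \deg(g_1)/\deg(f_1^{i_1}) = \deg(g)$, using multiplicativity of degree and $\deg(t_n^{n+1}) = \deg(f_n^{n+1})$, and this is constant in $i$, so the limit defining $\deg(\Phi(g))$ equals $\deg(g)$.

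The main obstacle I anticipate is making precise the passage from the combinatorial morphism $g_1 : K_{i_1} \to K_1$ to a genuine continuous open self-map of $I$ with the same degree, while simultaneously keeping straight the identification of $K_n/R^{K_n}$ with the interval $I$ appearing in $K = \varprojlim(I, t_n^{n+1})$. The quotient $K_n/R^{K_n}$ is an arc (not literally $[0,1]$ with the tent-map structure), and the bonding maps $t_n^{n+1}$ in D\k{e}bski's normalized presentation come from the chains $\mathcal{U}^n$ of Theorem \ref{thm2}; one must verify that under these identifications $\deg(f_n^{n+1})$ (combinatorial degree) matches $\deg(t_n^{n+1})$ (tent-map degree) and that $g_1$ descends to a map of matching degree. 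I expect this bookkeeping to be routine given Lemma \ref{obs0} and the construction in Theorem \ref{thm2}, but it is where all the care is needed; alternatively, one can sidestep literal identification by observing that $\deg$ on $\Homeo(K)$ is defined via any refining chain/approximating sequence, and choosing the approximating sequence $h_i$ directly from the $K_n$'s as above, so that only multiplicativity of degree and the independence-of-sequence lemma of D\k{e}bski are invoked.
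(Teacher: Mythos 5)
Your proposal is correct and takes essentially the same route as the paper: the paper likewise builds a presentation $K\cong\varprojlim(I,h_n^{n+1})$ whose bonding maps are explicit piecewise-linear realizations of the combinatorial maps with $\deg(h_n^{n+1})=\deg(f_n^{n+1})$, observes that $\pi_1^{\bK}\circ g=g_1\circ\pi_{i_1}^{\bK}$ descends to $\pi_1^{(K)}\circ\Phi(g)=g_1'\circ\pi_{i_1}^{(K)}$ with $\deg(g_1')=\deg(g_1)$, and concludes by multiplicativity of degree exactly as you do. The identification bookkeeping you flag as the main obstacle is precisely what the paper's explicit construction (breakpoints at the turning points of $f_n^{n+1}$, plus a nested-intervals homeomorphism $\bK/R^{\bK}\to\varprojlim(I,h_n^{n+1})$) is there to handle, and the paper also declares that verification routine and omits it.
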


\begin{proof}
Let $\bK=\varprojlim(K_n,f_n^{n+1})$ and $\pi_n$ the projection of $\bK$ onto its $n$th coordinate. For each $n \in \N$, let $p(n)$ be the number of vertices in $K_n$. Now, for $n \in \N$, we define a map $h_n: I \to I$ as follows. First, we define a map $u:\left[\deg (f_n^{n+1})(p(n)-1)+1\right] \to [p(n+1)]$. Let $u(0)=0$. For $1 \leq i \leq \deg(f_n^{n+1})(p(n)-1)$, let

\[
\begin{split}
u(i) &=\min \bigg\{ j \ : \ f_n^{n+1}(j) \neq f_n^{n+1}(j-1) \textrm{ and }\\
& \qquad \qquad \norm{\{k<j \ : \ f_n^{n+1}(k) \neq f_n^{n+1}(k-1)\}}=i-1 \bigg\}\\
\end{split}
\]

Then, we let $h_n^{n+1}:I \to I$ be the piece-wise linear map with $h_n^{n+1}(0)=0$ and with breakpoints at the points 
\[\left(\frac{u(i)}{p(n+1)-1},\frac{f_n^{n+1}(u(i))}{p(n)-1}\right)\]
for each $1 \leq i \leq \deg(f_n^{n+1})(p(n)-1)$. One can check that $h_n^{n+1}$ is a continuous, open surjection $I \to I$ with $\deg (h_n^{n+1})=\deg (f_n^{n+1})$. Given $\mathbf{x}=\left(x_j\right)_{j\in\N} \in \bK$ and some $l \in \N$, we define 
\[I_l(\mathbf{x})=\left[\frac{a_l(\mathbf{x})}{p(l)-1}, \frac{b_l(\mathbf{x})}{p(l)-1}\right]\]
where 
\[a_l(\mathbf{x})=\max \left(\left\{i  \ : \ \frac{i}{p(n)-1}< x_l\right\}\cup \{0\}\right)\]
and
\[b_l(\mathbf{x})=\min \left( \left\{i \ : \ \frac{i}{p(n)-1} > x\right\}\cup \{p(n)-1\}\right)\]

We have that the map
\[\bK \to \varprojlim (I,h_n^{n+1})\]
that sends a point $\mathbf{x}=\left(x_j\right)_{j\in\N} \in \bK$ to $\mathbf{y}=\left(y_j\right)_{j\in\N} \in \varprojlim (I,h_n^{n+1})$ where
\[y_j= \bigcap_{l\geq j} h_j^l\left(I_l(\mathbf{x})\right)  \]
descends to a homeomorphism 
\[\bK/R^{\bK} \to \varprojlim (I,h_n^{n+1})\]
There some details to check here, but the proofs are very similar to the proof of Theorem \ref{thm2}, so we omit the details. The point of this representation of $K$ is that it is clear that if $g \in \aut (\bK)$ such that $\pi_1 \circ g= g_1 \circ \pi_n$ for some $n \in \N$ and $g_1 \in \mathcal{K}$, then $\Phi(g)$ is such that:
\[\pi_1^{(K)} \circ \Phi(g) = g_1' \circ \pi_n^{(K)}\]
for some continuous open $g_1'$ of degree equal to the degree of $g_1$. So we have that
\[\deg (\Phi(g))=\frac{\deg g_1'}{\deg h_1^n}=\frac{\deg g_1}{\deg f_1^n}=\deg (g)\]
\end{proof}

\begin{lem}\label{lem5}
The degree map $\deg :\aut(\bK) \to \Q^\times$ is surjective.
\end{lem}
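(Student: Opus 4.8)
The plan is to show that for each positive integer $d$ there is an automorphism of $\bK$ of degree $d$; the lemma follows at once, since $\deg:\aut(\bK)\to\Q^\times$ is a group homomorphism --- it agrees with $\deg\circ\Phi$ by Lemma~\ref{lem4}, and $\Phi$ and $\deg:\Homeo(K)\to\Q^\times$ are homomorphisms --- so its image is a subgroup of $\Q^\times$, and a subgroup containing every positive integer also contains every reciprocal $1/d$ and hence all of $\Q^\times$.

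Fix $d\ge 1$, write $\bK=\varprojlim(K_n,f_n^{n+1})$ for a generic sequence with projections $\pi_n:\bK\to K_n$, and let $p(n)$ be the number of vertices of $K_n$. The one elementary input I would record first is a counting fact about $\mathcal{K}$: a morphism $\fl{a}\to\fl{b}$ of degree $k$ exists if and only if $k(b-1)\le a-1$. (Necessity: each of the $k$ monotone pieces changes value by at most one per step and surjects onto $\{0,\dots,b-1\}$, hence has length at least $b-1$. Sufficiency: zig-zag $k$ times, putting the surplus length $a-1-k(b-1)$ into a plateau on the final piece.) The strategy is then: find an index $\ell$ and a morphism $u:K_\ell\to K_1$ in $\mathcal{K}$ with $\deg(u)=d\cdot\deg(f_1^\ell)$. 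Once this is done, $u\circ\pi_\ell$ and $\pi_1$ are two morphisms $\bK\to K_1$, so by projective ultrahomogeneity of $\bK$ (the remark following Theorem~\ref{thm3}) there is $g\in\aut(\bK)$ with $\pi_1\circ g=u\circ\pi_\ell$, and then, directly from the definition of the degree of an automorphism (taking $i_1=\ell$ and $g_1=u$ there), $\deg(g)=\deg(u)/\deg(f_1^\ell)=d$.

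By the counting fact such a $u$ exists precisely when $d\cdot\deg(f_1^\ell)\,(p(1)-1)\le p(\ell)-1$, that is, when $K_\ell$ is large relative to $\deg(f_1^\ell)$. Producing such an $\ell$ is where genericity enters and is the heart of the proof. I would take $B=\fl{d(p(1)-1)+1}$ and let $g_B:B\to K_1$ be the degree-one epimorphism that is constant equal to the zero-vertex on its first $(d-1)(p(1)-1)+1$ vertices and then increases by one at each subsequent step. Applying condition~(2) in the definition of a generic sequence to $A=K_1$, $e=\textrm{id}_{K_1}$ and $g_B:B\to K_1$ produces an index $N$ and a morphism $h:K_N\to B$ with $g_B\circ h=f_1^N$. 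Since $\deg(g_B)=1$ and degree is multiplicative along composites of morphisms of $\mathcal{K}$ (just as for open continuous self-maps of $I$), this gives $\deg(f_1^N)=\deg(h)$; and since $h:K_N\to B$ is a morphism, the counting fact forces $\deg(h)\,(p(B)-1)\le p(N)-1$, i.e.\ $d\cdot\deg(f_1^N)\,(p(1)-1)\le p(N)-1$. Hence a morphism $u:K_N\to K_1$ of degree $d\cdot\deg(f_1^N)$ exists, and by the previous paragraph this yields $g\in\aut(\bK)$ with $\deg(g)=d$.

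The only genuine obstacle is the room inequality $d\cdot\deg(f_1^\ell)(p(1)-1)\le p(\ell)-1$: nothing in a generic sequence forbids the composite bonding map $f_1^N$ from having degree close to the maximum $(p(N)-1)/(p(1)-1)$ allowed by the counting fact, leaving no space at level $N$ to build a morphism of $d$ times that degree down onto $K_1$. Slipping the deliberately stretched object $B$ --- still of degree one onto $K_1$, but carrying a long plateau --- between $K_N$ and $K_1$ is exactly what forces the required slack. Everything else is routine bookkeeping with Definition~\ref{defn1}.
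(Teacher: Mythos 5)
Your proof is correct and follows essentially the same route as the paper's: use the extension/genericity property to interpolate an auxiliary linear graph mapping onto $K_1$ with a prescribed degree so as to create slack at some level $K_N$, then apply projective ultrahomogeneity and the multiplicativity of degree. The only real difference is cosmetic --- the paper realizes an arbitrary $p/q$ in one step by choosing an object $C$ admitting both a degree-$p$ and a degree-$q$ morphism onto $K_1$, whereas you realize each positive integer $d$ and then invoke the subgroup structure of the image; your explicit counting criterion $k(b-1)\le a-1$ is what the paper leaves implicit in the phrase ``enough vertices.''
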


\begin{proof}
Let $\bK=\varprojlim(K_n,f_n^{n+1})$. Suppose that $\frac{p}{q} \in \Q^\times$ where $p$ and $q$ are positive natural numbers. Let $C \in \mathcal{K}$ be a finite pointed linear graph with enough vertices such that there exists morphisms of degree $p$ and morphisms of degree $q$ from $C$ to $K_1$. Fix $h:C \to K_1$ any morphism with $\deg (h)=q$. By the projective extension property, there is some $N \in \N$ and a morphism $t:K_N \to C$ so that $h \circ t=f_1^N$. So:
\begin{equation}\label{eqn4}
    \deg (t)=\frac{\deg (f_1^N)}{\deg (h)}=\frac{\deg (f_1^N)}{q}
\end{equation}
Now let $g:C \to A$ be a morphism with $\deg (g)=p$. Projective ultrahomogeneity implies that there is some $\alpha \in \aut(\bK)$ so that $\pi_1 \circ \alpha=g \circ t \circ \pi_N$ and now by \ref{eqn4}:
\[\deg (\alpha)= \frac{\deg (g) \deg(t)}{\deg (f_1^N)}=\frac{p}{q}\]
\end{proof}

Lemmas \ref{lem4} and \ref{lem5} imply that the degree map,
\[\deg:\Homeo(K) \to \Q^\times\]
is also surjective.

\begin{figure}[ht]
\centering
\includegraphics[height=4cm]{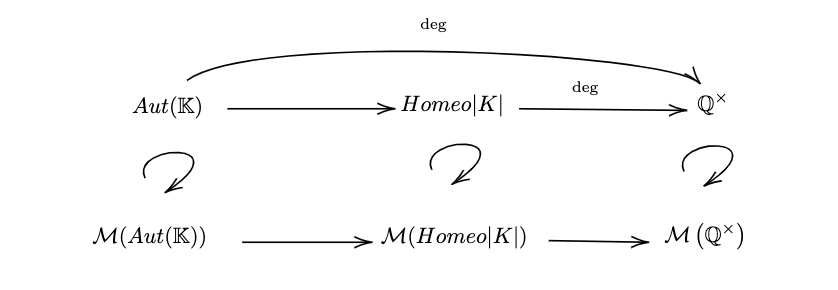}
\caption{}
\label{fig1}
\end{figure}

The map $\deg:\aut(\bK) \to \Q^\times$ is continuous as the composition of two continuous maps (see Figure \ref{fig1} for a diagram of the situation we have thus far). It is a general fact that if $G$ and $H$ are topological groups and there is a continuous surjective group homomorphism $f:G \to H$, then $f$ induces a surjective, continuous, $G$-equivariant map $\mathcal{M}(G) \to \mathcal{M}(H)$. This is simply because 
\[g \cdot x = f(g) \cdot x\]
defines a continuous action of $G$ on $\mathcal{M}(H)$ and the action is minimal because $f$ is surjective. This fact gives us the existence of the two maps on the bottom row of Figure \ref{fig1} and that they are both surjections. Notice that since $\mathcal{M}(\Q^\times)$ is non-metrizable (this is true of any countable discrete group by a theorem of Veech, see \cite{kpt}, Appendix), this immediately implies that the universal minimal flows of $\aut(\bK)$ and $\Homeo(K)$ are non-metrizable.

In the remainder of the paper, we will show that the bottom two arrows are actually injective as well; and hence that $\mathcal{M}(\aut(\bK))$ and $\mathcal{M}(\Homeo(K))$ are both homeomorphic to $\mathcal{M}(\Q^\times)$.

\section{The main theorem}\label{sec7degreeone}
In this section we prove the main theorem:

\begin{thm}\label{thm5.5}
The group $\Homeo(K) \simeq U \rtimes F$ where $U$ is Polish and extremely amenable and $F$ is free abelian on countably many generators.  
\end{thm}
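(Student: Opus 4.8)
The plan is to produce the group $U$ as the kernel of the degree homomorphism $\deg \colon \Homeo(K) \to \Q^\times$, show that this $U$ is extremely amenable, and then split the short exact sequence $1 \to U \to \Homeo(K) \to \Q^\times \to 1$ by exhibiting a discrete copy of $F = \Q^\times$ inside $\Homeo(K)$. Since $\deg$ is a continuous surjective homomorphism onto the discrete group $\Q^\times$ (Section \ref{sec6degree}), its kernel $U = \{f \in \Homeo(K) : \deg(f) = 1\}$ is an open (hence closed) normal subgroup, and it is Polish as a closed subgroup of the Polish group $\Homeo(K)$. So $F$ will be free abelian on countably many generators, matching $\Q^\times$.

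The first substantive step is to prove that $U$ is extremely amenable. The natural route is the Kechris--Pestov--Todorcevic correspondence: let $\K_1 \subseteq \K$ be the subcategory of degree-one morphisms (equivalently, monotone epimorphisms), check that it is still a projective Fraiss\'e family with the same limit object $\bK$ (the joint projection and amalgamation proofs of Theorem \ref{thm1} should go through restricted to monotone maps, as already used implicitly in the ``amalgamate along one slope'' Lemma \ref{lem3} with $\mathcal{K}_{\textrm{inc}}$), and then show that $\aut(\bK)$ acting through degree-one automorphisms has a dense extremely amenable subgroup $U_0$, via a projective Ramsey property for $\K_1$. Concretely, I would show $\deg^{-1}(1) \cap \aut(\bK)$ is exactly the automorphisms that project, at every level, to degree-one maps, prove the dual Ramsey statement for finite pointed linear graphs with monotone epimorphisms (this reduces to a Ramsey-type statement about colorings of monotone surjections $\fl{N} \to \fl{n}$, which are essentially weakly increasing sequences — a finite Ramsey / Hales--Jewett-flavored argument), conclude $\overline{\deg^{-1}(1) \cap \aut(\bK)}$ is extremely amenable, and finally use Corollary \ref{cor1} plus openness of $U$ to transfer density: $\Phi\big(\deg^{-1}(1)\cap\aut(\bK)\big)$ is dense in $U$, and the closure of an extremely amenable subgroup is extremely amenable, so $U$ is extremely amenable.

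The second step is to build a section $s \colon \Q^\times \to \Homeo(K)$ whose image is a discrete subgroup complementing $U$. Since $\Q^\times$ is free abelian on the primes, it suffices to choose, for each prime $p$, a homeomorphism $\sigma_p \in \Homeo(K)$ with $\deg(\sigma_p) = p$, and more importantly to choose them so that the $\sigma_p$ generate a copy of $\Q^\times$ (commuting, no relations) sitting discretely. The cleanest way: use the tent-map presentation $K = \varprojlim(I, t_n^{n+1})$ and realize the degree-$p$ maps as shift-like homeomorphisms induced by reindexing the inverse system, so that different primes' generators literally commute on the nose; discreteness is automatic because $\deg$ is continuous with discrete target and is injective on $s(\Q^\times)$, so $s(\Q^\times) \cap U = \{1\}$ and $s(\Q^\times) \cong \Q^\times$ inherits the discrete topology. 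Then $\Homeo(K) = U \cdot s(\Q^\times)$ with $U$ normal and $U \cap s(\Q^\times)$ trivial gives the semidirect product $\Homeo(K) \cong U \rtimes \Q^\times$ algebraically, and since $U$ is open and $s(\Q^\times)$ discrete, the product map $U \times s(\Q^\times) \to \Homeo(K)$ is a homeomorphism, yielding the topological semidirect product.

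The main obstacle I anticipate is the extreme amenability of $U$ — specifically, establishing the projective Ramsey property for the degree-one subcategory $\K_1$ and verifying that its Fraiss\'e limit really is (the relevant degree-one reduct of) $\bK$ so that the KPT machinery applies and transfers, via $\Phi$ and density, to the non-separable-looking but actually Polish group $U \le \Homeo(K)$. A secondary technical point is ensuring the chosen prime-degree homeomorphisms genuinely commute and generate a free abelian group rather than satisfying some hidden relation; handling this via an explicit inverse-limit reindexing model of $K$ should make it transparent.
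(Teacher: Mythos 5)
Your overall architecture matches the paper's: take $U=\ker(\deg)$, which is open, normal, and Polish; prove it is extremely amenable via a projective Fraiss\'e/KPT argument; and split the sequence $1\to U\to \Homeo(K)\to\Q^\times\to 1$ using commuting prime-degree homeomorphisms. The splitting half of your sketch is essentially the paper's argument: the paper takes the standard induced homeomorphisms $f_p$ of Eberhart--Fugate--Schumann, determined by $\pi_1\circ f_p=g_p\circ\pi_1$ for the standard degree-$p$ tent maps $g_p$ (existence and uniqueness are the cited nontrivial input), checks they commute, and invokes Jahel--Zucker to pass from a continuous splitting to a topological semidirect product.

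The gap is in the extreme amenability step. Your proposed category $\K_1$ of degree-one morphisms of $\K$ is the wrong object, for two related reasons. First, degree-one morphisms between finite pointed linear graphs are exactly the monotone epimorphisms, so an inverse limit along $\K_1$ has quotient an arc, not the universal Knaster continuum; the Fraiss\'e limit of $\K_1$ is not $\bK$, and its automorphism group has nothing to do with $\Homeo^1(K)$. Second--and this is the step where your reduction fails--a degree-one automorphism $g$ of $\bK$ does \emph{not} project levelwise to degree-one maps: if $\pi_j\circ g=g_j\circ\pi_{i_j}$, then $\deg(g)=\deg(g_j)/\deg(f_j^{i_j})$, so $\deg(g_j)=\deg(f_j^{i_j})$, the (typically large) degree of the bonding map, not $1$. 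The paper's device is the decorated category $\mathcal{K}^*$ whose objects are pairs $(A,n)$ with $n\in\Q^\times$ and whose morphisms $(B,m)\to(A,n)$ are required to have degree $m/n$. This category has the same generic sequences as $\K$ (Proposition \ref{prop2}), so its limit is $\bK$ with the same underlying topological graph; its automorphism group is exactly $\{g\in\aut(\bK):\deg(g)=1\}$ (Proposition \ref{prop3}); and because all morphisms between two fixed decorated objects share a common degree, each morphism is determined by its finite increasing set of ``breakpoints,'' which is what lets the Ramsey property reduce to the classical finite Ramsey theorem rather than to a dual-Ramsey/Hales--Jewett statement for monotone surjections. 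Without some such bookkeeping, your plan either proves extreme amenability of the wrong group or has no Fraiss\'e class to which the KPT correspondence applies.
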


Let: 
\[\Homeo^1(K)\leq \Homeo(K)\]
be the subgroup of all homeomorphisms with degree 1; that is the kernel of the degree map. By the considerations of the previous section, $\Homeo^1(K)$ is an open (and so Polish), normal subgroup of $\Homeo(K)$. This will end up being the group $U$ in Theorem \ref{thm5.5}. We defer the proof of Theorem \ref{thm5.5} for a bit as it requires some set-up and construction. We first will focus on showing that $\Homeo^1(K)$ is extremely amenable using a projective Fraissé construction and the Kechris-Pestov-Todorcevic correspondence. 

\subsection{Approximating the group of degree one homeomorphisms}
We define a Fraissé family, $\mathcal{K}^*$, that is a slight modification of $\mathcal{K}$ as follows. The objects are all pairs $(A,n)$ where $A$ is a finite pointed linear graph and $n\in\Q^\times$. For $(A,n)$ and $(B,m)$, $f:(B,m) \to (A,n)$ is a morphism if $f:B \to A$ is in $\mathcal{K}$ and $\deg (f)=\frac{m}{n}$. So, between any two fixed objects of $\mathcal{K}^*$, all morphisms have the same degree. In the case that $\frac{m}{n} \notin \N$, there are no morphisms between $(B,m)$ and $(A,n)$. Checking that $\mathcal{K}^*$ is a Fraissé family is easy. For example to see that the projective amalgamation property holds, given $f:(B,m) \to (A,k)$ and $g:(C,n) \to (A,k)$, let $D$ be the finite pointed linear graph and $f':D \to B$ and $g':D \to C$ be the morphisms constructed in the proof of Theorem \ref{thm1}. Then, an inspection of the proof of Theorem \ref{thm1} gives that $f'$ is a morphism $\left(D,\frac{mn}{k}\right) \to (B,m)$ and $g'$ is a morphism $\left(D,\frac{mn}{k}\right) \to (C,n)$; so $\left(D,\frac{mn}{k}\right), f',g'$ witness the amalgamation property for $\mathcal{K}^*$. From now on, $\bK^*$ will be used to denote the projective Fraissé limit of $\mathcal{K}^*$. There is an obvious forgetful functor from $\mathcal{K}^*$ to $\mathcal{K}$ which simply takes an object $(A,n)$ to $A$ and a morphism $f:(B,n) \to (A,m)$ just to the morphism of the underlying finite linear graph $B \to A$. Abusing notation slightly, if $f:(B,n) \to (A,m)$ is in $\mathcal{K}^*$, we will also use $f$ to denote the morphism in $\mathcal{K}$ of the underlying finite graphs $B \to A$.

\begin{prop}\label{prop2}
Any generic sequence for $\mathcal{K}^*$ is a generic sequence for $\mathcal{K}$. 
\end{prop}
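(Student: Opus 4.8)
The plan is to verify that a generic sequence $(A_n, \mathfrak{f}_n)$ for $\mathcal{K}^*$, pushed forward through the forgetful functor to a sequence $(A_n, f_n)$ of finite linear graphs with morphisms in $\mathcal{K}$, satisfies the two defining conditions of a generic sequence for $\mathcal{K}$: (1) every $A\in\mathcal{K}$ receives a morphism from some $A_n$, and (2) for every $e:A_n\to A$ and $g:B\to A$ in $\mathcal{K}$, there are $N>n$ and $h:A_N\to B$ in $\mathcal{K}$ with $g\circ h = e\circ f_n^N$. The key point throughout is that the forgetful functor is surjective on objects and on morphisms, and that degrees multiply along composition, so one can always lift a diagram in $\mathcal{K}$ to a diagram in $\mathcal{K}^*$ by decorating each finite linear graph with the appropriate rational label forced by the degrees.

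For condition (1): given $A\in\mathcal{K}$, fix any morphism $\psi: C \to A$ in $\mathcal{K}$ (for instance the identity, taking $C=A$) and let $d=\deg(\psi)$. Then $\psi$ is a morphism $(C, nd)\to (A,n)$ in $\mathcal{K}^*$ for any $n\in\Q^\times$; in particular choosing $n=1$ we get that $(C,d)$ maps to $(A,1)$ in $\mathcal{K}^*$. Apply condition (1) of genericity for $\mathcal{K}^*$ to the object $(A,1)$: there is some $A_n$ and a morphism $(A_n, ?) \to (A,1)$ in $\mathcal{K}^*$, which forgets to a morphism $A_n\to A$ in $\mathcal{K}$. (Here one must note that each $A_n$ in the generic sequence for $\mathcal{K}^*$ carries \emph{some} rational label $r_n$; since all labels in a generic sequence appear, and since the question of whether a morphism $A_n\to A$ exists depends only on the underlying graphs and a divisibility condition on the labels, the label bookkeeping can always be arranged — this is the one place requiring a little care.)

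For condition (2): let $e:A_n\to A$ and $g:B\to A$ be morphisms in $\mathcal{K}$. Let $r_n$ be the label of $A_n$ in the generic sequence for $\mathcal{K}^*$ (so $e$ forgets from a morphism $(A_n, r_n)\to (A, r_n/\deg(e))$, which fixes the label we must put on $A$: call it $a := r_n/\deg(e)$). Then $e:(A_n, r_n)\to (A,a)$ is a morphism in $\mathcal{K}^*$, and $g:(B, a\deg(g))\to (A,a)$ is a morphism in $\mathcal{K}^*$. Apply condition (2) of genericity for $\mathcal{K}^*$ to these: there is $N>n$ and $h:(A_N, r_N)\to (B, a\deg(g))$ in $\mathcal{K}^*$ with $g\circ h = e\circ \mathfrak{f}_n^N$. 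Forgetting to $\mathcal{K}$, this gives exactly $h:A_N\to B$ in $\mathcal{K}$ with $g\circ h = e\circ f_n^N$, as desired.

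The main (and essentially only) obstacle is the label bookkeeping: one must check that the rationals attached to the objects $A_n$ in a generic sequence for $\mathcal{K}^*$ never obstruct lifting a $\mathcal{K}$-diagram, i.e., that whenever we need a morphism in $\mathcal{K}^*$ between labelled versions of given graphs, the divisibility constraint $m/n\in\N$ can be met. This follows because in condition (2) the target label $a$ is \emph{determined} by $r_n$ and $\deg(e)$ rather than chosen, and the label $a\deg(g)$ on $B$ is then automatically an integer multiple of $a$ with integer ratio $\deg(g)$, so the needed morphism type exists in $\mathcal{K}^*$; genericity of the $\mathcal{K}^*$-sequence then supplies it. Everything else is a routine diagram chase through the forgetful functor.
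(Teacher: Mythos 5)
Your proof is correct and follows essentially the same route as the paper: for condition (1) apply $\mathcal{K}^*$-genericity to $(A,1)$, and for condition (2) decorate $A$ with the label $r_n/\deg(e)$ forced by $e$ and $B$ with $a\deg(g)$, then invoke $\mathcal{K}^*$-genericity and forget the labels. The only difference is your hedging about "label bookkeeping" in condition (1), which is unnecessary—condition (1) of genericity for $\mathcal{K}^*$ applied to $(A,1)$ already hands you the morphism with no further argument.
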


\begin{proof}
Let $(A_n,m_n)$ and $f_n^{n+1}:(A_{n+1},m_{n+1}) \to (A_n,m_n)$ be a generic sequence in $\mathcal{K}^*$. Now, let $B \in \mathcal{K}$. Since $(B,1) \in \mathcal{K}^*$, we have a morphism $h:(A_N,m_N) \to (B,1)$ for some $N \in \N$ and of course, $h:A_N \to B$ is in $\mathcal{K}$. Suppose $C,B$ are finite pointed linear graphs and $h,g$ are morphisms in $\mathcal{K}$ with $h:A_n \to B$ and $g:C \to B$. Note that $\frac{m_n}{\deg (h)} \in \Q^\times$ so $\left(B, \frac{m_n}{\deg (h)}\right)\in \mathcal{K}^*$ and also $\left(C, \frac{m_n \deg (g)}{\deg (h)}\right)\in\mathcal{K}^*$. Further, $h:(A_n,m_n) \to \left(B, \frac{m_n}{\deg (h)}\right)$ and $g:\left(C, \frac{m_n \deg (g)}{\deg (h)}\right)\to \left(B, \frac{m_n}{\deg (h)}\right)$ are morphisms in $\mathcal{K}^*$. As $(A_n,m_n)$ is generic for $\mathcal{K}^*$, there is $N\geq n$ and $p:(A_N,m_N) \to \left(C, \frac{m_n \deg (g)}{\deg (f)}\right)$ so that $h \circ f_n^N=g \circ p$, and of course, $p \in \mathcal{K}$. So, the sequence $(A_n,f_n^{n+1})$ is generic for $\mathcal{K}$.
\end{proof}

So, let $\bK^*=\varprojlim ((A_n,m_n),f_n^{n+1})$ for a generic sequence $(A_n,m_n),f_n^{n+1}$ in $\mathcal{K}^*$. By Proposition \ref{prop2}, $\varprojlim (A_n,f_n^{n+1})=\bK$, the projective Fraissé limit of $\mathcal{K}$. Notice that any automorphism of $\bK^*$ is an automorphism of $\bK$. We have the following situation:
\[\aut(\bK^*) \overset{\textrm{id}}{\hookrightarrow} \aut(\bK) \overset{\Phi}{\hookrightarrow} \Homeo(K)\]

\begin{prop}\label{prop3}
The image, $\Phi(\aut (\bK^*))$, is dense in $\Homeo^1(K)$.
\end{prop}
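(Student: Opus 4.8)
The plan is to mirror the density argument of Theorem \ref{thm4.5}/Corollary \ref{cor1} one level down: show that elements of $\Phi(\aut(\bK^*))$ are exactly those approximated by automorphisms of $\bK$ whose degree is $1$, and then invoke the earlier density machinery restricted to degree-one maps. First I would check the easy containment: if $g \in \aut(\bK^*)$, then by construction $g$ permutes the structures $A_n$ respecting the labels $m_n$, so the witnessing map $g_1 : A_{i_1} \to A_1$ satisfies $\deg(g_1) = m_{i_1}/m_1 = \deg(f_1^{i_1})$, hence $\deg(g) = 1$; by Lemma \ref{lem4}, $\deg(\Phi(g)) = 1$, so $\Phi(\aut(\bK^*)) \subseteq \Homeo^1(K)$. (One should note $\Homeo^1(K)$ is closed, being the kernel of the continuous map $\deg$, so the closure of $\Phi(\aut(\bK^*))$ stays inside it.)

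For the reverse inclusion I would take $h \in \Homeo^1(K)$ and $\epsilon > 0$ and produce $g \in \aut(\bK^*)$ with $\dsup(\Phi(g), h) < \epsilon$. Apply Theorem \ref{thm4.5} with $f = \mathrm{id}_K$ and $g$ (there) $= h$ to get $\alpha \in \aut(\bK)$ with $\dsup(\Phi(\alpha), h) < \epsilon/2$; since $h$ has degree $1$ and $\deg$ is continuous into the discrete group $\Q^\times$, for $\epsilon$ small enough (using Lemma 9 of \cite{debski}) $\Phi(\alpha)$ also has degree $1$, hence $\deg(\alpha) = 1$ by Lemma \ref{lem4}. So it suffices to show: every $\alpha \in \aut(\bK)$ with $\deg(\alpha) = 1$ lies in the closure of $\aut(\bK^*)$ inside $\aut(\bK)$. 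Fix such an $\alpha$ and a basic neighborhood, i.e. an $N$ and the requirement that a conjugate agree with $\alpha$ on the fibers of $\pi_N : \bK \to A_N$. Since $\bK = \varprojlim(A_n, f_n^{n+1})$ with $(A_n, m_n)$ generic for $\mathcal{K}^*$, and $\deg(\alpha) = 1$, the finite piece of $\alpha$ seen at level $N$ is a degree-one morphism between two finite linear graphs, and I would use projective ultrahomogeneity of $\bK^*$ — note that the relevant morphisms $\bK^* \to (A_N, m_N)$, namely $\pi_N$ and $\alpha^{-1} \circ \pi_N$ pushed into $\mathcal{K}^*$, have the \emph{same} degree (both degree making the target label $m_N$), hence are genuinely $\mathcal{K}^*$-morphisms to the \emph{same} object — to find $g \in \aut(\bK^*)$ realizing the same partial map. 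This $g$, viewed in $\aut(\bK)$, agrees with $\alpha$ to the required precision.

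The main obstacle, and the step I would be most careful about, is the bookkeeping that the degree-one condition is exactly what lets the two morphisms into $A_N$ be regarded as morphisms into the \emph{same} labeled object $(A_N, m_N)$ of $\mathcal{K}^*$ — this is where the slogan ``degree-one homeomorphisms are approximated by automorphisms of $\bK^*$'' becomes a precise diagram chase. Concretely one must verify that a degree-one $\alpha \in \aut(\bK)$, written via $\pi_1^{\bK}\circ\alpha = \alpha_1\circ\pi_{i_1}^{\bK}$, can be adjusted (using genericity to pass to a common higher level) so that the comparison map is a $\mathcal{K}^*$-morphism, after which ultrahomogeneity of $\bK^*$ applies verbatim. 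The only other point needing a line is that the forgetful functor is surjective on objects at each level in the sense needed — i.e. that the generic sequence for $\mathcal{K}^*$, which by Proposition \ref{prop2} is generic for $\mathcal{K}$, really does exhaust all finite approximations of $\alpha$; but that is immediate from genericity. Everything else (the $\epsilon$-estimates, continuity of $\Phi$ and $\deg$) is routine given Theorem \ref{thm4.5}, Lemma \ref{lem4}, and the cited results of D\k{e}bski.
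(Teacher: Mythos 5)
Your proposal is correct, and its engine is the same degree bookkeeping that drives the paper's proof; but the two arguments are packaged differently, and the paper's packaging is strictly cleaner. You reduce to showing that every degree-one $\alpha\in\aut(\bK)$ lies in the \emph{closure} of $\aut(\bK^*)$ inside $\aut(\bK)$, and you get there by checking that $\pi_N\circ\alpha$ (you write $\alpha^{-1}\circ\pi_N$, which does not typecheck --- the composition must be in the other order) factors through a $\mathcal{K}^*$-morphism into $(A_N,m_N)$ and then invoking projective ultrahomogeneity of $\bK^*$ level by level. The computation you flag as the ``main obstacle'' is exactly the chain $\deg(\alpha_N)=\deg(\alpha_1)\deg(f_{i_1}^{i_N})/\deg(f_1^N)=m_{i_N}/m_N$, and once you have it for \emph{every} level simultaneously you have in fact shown more than you use: $\alpha$ itself is an automorphism of $\bK^*$, with no ultrahomogeneity or closure argument needed. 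That is the paper's route: it proves the exact set equality $\aut(\bK^*)=\{g\in\aut(\bK)\,:\,\deg(g)=1\}$ and then concludes from Corollary \ref{cor1} together with the openness of $\Homeo^1(K)$. So your argument is sound but carries a redundant approximation layer; the identical computation, read to its end, upgrades ``dense in the degree-one part of $\aut(\bK)$'' to ``equal to the degree-one part of $\aut(\bK)$,'' which is also the structural fact used later in the paper. Everything else in your write-up (the easy containment via Lemma \ref{lem4}, the use of Theorem \ref{thm4.5} with $f=\mathrm{id}_K$, and the observation that $\Homeo^1(K)$ is clopen as the kernel of a continuous map to a discrete group) matches the intended argument.
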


\begin{proof}[Proof of Proposition \ref{prop3}]
We show that: 
\[\aut(\bK^*)=\{g \in \aut(\bK) \ : \ \deg(g)=1\}\]
and from this fact and Corollary \ref{cor1}, the proposition follows. First if $g \in \aut(\bK^*)$ and $g_1:(A_{i_1},m_{i_1}) \to (A_1,m_1)$ is in $\mathcal{K}^*$ so that $\pi_1 \circ g=g_1\circ \pi_{i_1}$, then note $\deg g_1= \frac{m_{i_1}}{m_1}$. But, $f_1^{i_1}:(A_{i_1},m_{i_1}) \to (A_1,m_1)$ is also a morphism in $\mathcal{K^*}$ and so $\deg (f_1^{i_1})=\frac{m_{i_1}}{m_1}$. It follows that $\deg(g)=1$. 

Conversely, if $g \in \aut(\bK)$ is a degree one automorphism, then let $i_1<i_2<\cdots$ and $g_j:A_{i_j} \to A_j$ so that $\pi_j \circ g=g_j \circ \pi_j$. We know that 
\[1=\deg (g)=\frac{\deg g_1}{\deg f_1^{i_1}}\]
and so $\deg(g_1)=\deg (f_1^{i_1})=\frac{m_{i_1}}{m_1}$. So $g_1:(A_{i_1},m_{i_1}) \to (A_1,m_1)$ is in $\mathcal{K}^*$. Now for any $j>1$, the fact that $f_1^j \circ g_j=g_1 \circ f_{i_1}^{i_j}$ implies
\[\deg (g_j)=\frac{\deg g_1 \deg f_{i_1}^{i_j}}{\deg f_1^j}=\frac{\frac{m_{i_1}}{m_1}\cdot \frac{m_{i_j}}{m_{i_1}}}{\frac{m_j}{m_1}}=\frac{m_{i_j}}{m_j}\]
and so $g_j:(A_{i_j},m_{i_j}) \to (A_j,m_j)$ is in $\mathcal{K}^*$. It follows that $g \in \aut(\bK^*)$.
\end{proof}

\subsection{Extreme amenability}
We prove in this section:
\begin{thm}\label{thm6}

\begin{enumerate}[(i)]
    \item The group $\aut(\bK^*)$ is extremely amenable.
    \item The group $Homeo^1(K)$ is extremely amenable.
\end{enumerate}

\end{thm}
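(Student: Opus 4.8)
The plan is to prove (i) via the projective form of the Kechris--Pestov--Todorcevic correspondence and then deduce (ii) from it. Every object $(A,n)$ of $\mathcal{K}^*$ is rigid: a $\mathcal{K}^*$-automorphism of $(A,n)$ is a degree-one epimorphism $A\to A$, hence a zero-vertex-preserving graph automorphism of a pointed finite linear graph, and such a map is forced (neighbour by neighbour, starting from the zero vertex) to be the identity. Consequently the projective KPT correspondence says that $\aut(\bK^*)$ is extremely amenable if and only if $\mathcal{K}^*$ has the projective Ramsey property: for all objects $(A,n),(B,m)$ admitting a morphism $(B,m)\to(A,n)$ and every $r\geq 2$, there is an object $(C,\ell)$ with a morphism to $(B,m)$ such that every coloring $c:\textrm{Epi}_{\mathcal{K}^*}((C,\ell),(A,n))\to\{1,\dots,r\}$ admits $\gamma\in\textrm{Epi}_{\mathcal{K}^*}((C,\ell),(B,m))$ with $c$ constant on $\{\alpha\circ\gamma : \alpha\in\textrm{Epi}_{\mathcal{K}^*}((B,m),(A,n))\}$. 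So (i) reduces entirely to this combinatorial statement.

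For the Ramsey property I would use the feature, special to $\mathcal{K}^*$, that all morphisms between two fixed objects share a single degree, together with the Graham--Rothschild dual Ramsey theorem applied slope by slope. The key encoding: by Definition \ref{defn1} a degree-$k$ morphism $\langle a\rangle\to\langle b\rangle$ is a concatenation of $k$ monotone surjections, and unfolding the zig-zag realizes it as the composition of a non-decreasing surjection $\langle a\rangle\to\langle k(b-1)+1\rangle$ with the canonical $k$-fold folding epimorphism $\langle k(b-1)+1\rangle\to\langle b\rangle$. Fixing the degree thus turns each family of morphisms occurring in the statement into a family of monotone (hence rigid) surjections between finite linear orders, and turns ``factoring through $\gamma$'' into the corresponding factoring of monotone surjections once the folding maps on the two sides are lined up. The dual Ramsey theorem is precisely the Ramsey property of finite linear orders with monotone surjections; applying it once for each of the finitely many (degree-bounded) slopes and tracking the resulting product structure--exactly in the spirit of the slope-by-slope construction in the proof of Theorem \ref{thm1}--yields a linear graph $C$ large enough, and $\ell$ is then dictated by requiring every map in play to have its prescribed degree.

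I expect the Ramsey step to be the main obstacle, and within it the bookkeeping rather than any fundamentally new combinatorics. Two points need care: (a) making the unfolding correspondence into a genuine composition-compatible bijection--in a composite $\alpha\circ\gamma$ the folding map attached to $\gamma$ sits between the two non-decreasing factors, so one must re-cut and re-fold, and must accommodate the constant stretches at turning points that Definition \ref{defn1} permits; and (b) assigning the second coordinate $\ell\in\Q^\times$ to the object $C$ produced by the label-free dual Ramsey theorem so that all relevant maps become honest $\mathcal{K}^*$-morphisms of the right degree. Neither point forces anything beyond Graham--Rothschild. Once (i) is in place, (ii) is immediate: $\Phi$ is a topological group embedding, so $\Phi(\aut(\bK^*))$ is a topological copy of the extremely amenable group $\aut(\bK^*)$; by Proposition \ref{prop3} it is dense in $\Homeo^1(K)$; and extreme amenability transfers from a dense subgroup to the ambient group, since in any continuous action of $\Homeo^1(K)$ on a compact Hausdorff space a point fixed by the dense subgroup is fixed by all of $\Homeo^1(K)$ by continuity of $g\mapsto g\cdot x$.
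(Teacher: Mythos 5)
Your proposal is correct and follows the same architecture as the paper's proof: rigidity of the objects of $\mathcal{K}^*$ plus the projective Ramsey property, fed into the Bartošová--Kwiatkowska dual KPT correspondence (Theorem \ref{thm4.75}) for part (i), and transfer of extreme amenability along the dense embedding of Proposition \ref{prop3} for part (ii). The only real difference is how the combinatorial core is packaged. Where you unfold a degree-$d$ morphism $f:\fl{c}\to\fl{a}$ into a non-decreasing surjection onto $\fl{d(a-1)+1}$ followed by a folding map and then reach for Graham--Rothschild, the paper observes that such a morphism is completely determined by the increasing injection recording the $d(a-1)$ positions $j$ with $f(j)\neq f(j-1)$ (the value taken at each jump is forced by the zig-zag pattern, and within $\mathcal{K}^*$ the degree is pinned down by the pair of objects), and that precomposition on the morphism side corresponds to composition of increasing injections; this turns the Ramsey property of $\mathcal{K}^*$ into a single application of the classical finite Ramsey theorem. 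Your encoding carries exactly the same information --- a monotone surjection between finite linear orders is itself determined by its set of jump positions --- so the ``re-cut and re-fold'' bookkeeping you flag in point (a) evaporates once you pass from the unfolding to its jump set, and Graham--Rothschild is more than is needed: the Ramsey statement for monotone surjections of finite linear orders is literally the finite Ramsey theorem for subsets. Likewise, no slope-by-slope iteration or product argument is required; one application to the fully unfolded maps suffices. Your explicit rigidity check (which the paper leaves implicit) and the degree assignments in point (b) are fine, and part (ii) is argued exactly as in the paper.
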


Theorem \ref{thm6} will be proved via the dual of the Kechris-Pestov-Todorcecvic correspondence. First we need some set-up and definitions. By a \emph{$d$-coloring} of a set $X$, we simply mean a function $c:X \to [d]=\{0,1,\ldots,d-1\}$. Having fixed a coloring, $c:X \to [d]$, a subset $A$ of $X$ is \emph{monochromatic} if there is some $i \in [d]$ so that $A \subseteq c^{-1}(i)$. Fix a category $\mathcal{F}$ of finite structures with surjective morphisms. For $A,B$ objects in $\mathcal{F}$, let $\Epi(B,A)$ be the collection of all morphisms $B \to A$ in $\mathcal{F}$. Now we say that an object $A$ has \emph{the Ramsey property} if for any $d\in \N$ and any $B \in \mathcal{F}$, there exists $C \in \mathcal{F}$ so that for any coloring $c:\Epi(C,A) \to [d]$, there is some $g \in \Epi(C,B)$ so that 
\[\Epi(B,A) \circ g := \{f\circ g \ : \ f \in \Epi(B,A)\}\]
is monochromatic. The category $\mathcal{F}$ is a \emph{Ramsey category} if every object in $\mathcal{F}$ has the Ramsey property. Here is the dual of the Kechris-Pestov-Todorcecvic correspondence; it is due to Bartošová and Kwiatkowska (\cite{bkgowers}). An object $A \in \mathcal{F}$ is \emph{rigid} if there are no nontrivial morphisms $A \to A$.

\begin{thm}[Theorem 2.2. of \cite{bkgowers}]\label{thm4.75}
Let $\mathcal{F}$ be a projective Fraissé class \\ with projective Fraissé limit $\mathbb{F}$. The following are equivalent:
\begin{enumerate}
    \item the group $\aut(\mathbb{F})$ is extremely amenable
    \item $\mathcal{F}$ is a Ramsey category and every object in $\mathcal{K}$ is rigid
\end{enumerate}
\end{thm}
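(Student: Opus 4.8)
The plan is to deduce this dual correspondence from the abstract, group-theoretic characterisation of extreme amenability for non-archimedean Polish groups, transported into the combinatorics of $\mathcal{F}$ through projective ultrahomogeneity. Write $G=\aut(\mathbb{F})$ and recall that, since $\mathbb{F}=\varprojlim A_n$ for a generic sequence, the open subgroups $V_A=\{g\in G:\pi_A\circ g=\pi_A\}$, as $\pi_A:\mathbb{F}\to A$ ranges over morphisms onto finite objects $A\in\mathcal{F}$, form a neighbourhood basis of the identity. The first step is to record the standard fact (see \cite{pestovbook} and \cite{kpt}) that such a group is extremely amenable if and only if $(\star)$: for every open $V\le G$, every finite $S\subseteq G/V$, and every colouring $c:G/V\to[d]$, there is $g\in G$ with $gS$ monochromatic. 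The second preparatory step is to identify the $G$-space $G/V_A$ with $\Epi(\mathbb{F},A)$: the left action $g\cdot f=f\circ g^{-1}$ is transitive by projective ultrahomogeneity (the remark following Theorem \ref{thm3}), and the stabiliser of $\pi_A$ is exactly $V_A$, so $gV_A\mapsto\pi_A\circ g^{-1}$ is a $G$-equivariant bijection. Under this identification a finite $S\subseteq\Epi(\mathbb{F},A)$ always lies inside a set of the form $\Epi(B,A)\circ\pi_B$: writing each member of $S$ as $e\circ\pi_{A_n}$ and taking $n$ large, one takes $B=A_n$ and $\pi_B=\pi_{A_n}$.

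For the implication $(2)\Rightarrow(1)$ I would show that the Ramsey property forces $(\star)$. Fix $A$, an open $V_A$, a colouring $c:\Epi(\mathbb{F},A)\to[d]$, and a finite $S$; by the previous paragraph we may assume $S=\Epi(B,A)\circ\pi_B$. Apply the Ramsey property to $A$, $B$, $d$ to obtain $C\in\mathcal{F}$, choose a morphism $\sigma_C:\mathbb{F}\to C$, and define the induced colouring $\bar c:\Epi(C,A)\to[d]$ by $\bar c(e)=c(e\circ\sigma_C)$. The Ramsey property yields $g_0\in\Epi(C,B)$ with $\Epi(B,A)\circ g_0$ being $\bar c$-monochromatic. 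The key move is to realise the finite morphism $g_0$ inside $G$: since $g_0\circ\sigma_C$ and $\pi_B$ are both morphisms $\mathbb{F}\to B$, projective ultrahomogeneity provides $g\in G$ with $g_0\circ\sigma_C=\pi_B\circ g$. Then for every $e\in\Epi(B,A)$ one computes $c(e\circ\pi_B\circ g)=c(e\circ g_0\circ\sigma_C)=\bar c(e\circ g_0)$, which is independent of $e$; hence a $G$-translate of $S$ is $c$-monochromatic, establishing $(\star)$ and therefore extreme amenability. Rigidity is not needed here.

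For $(1)\Rightarrow(2)$ I would split into the rigidity clause and the Ramsey clause. Assuming $G$ extremely amenable, rigidity is the easy half: if some $A$ admitted a nontrivial self-morphism $\sigma$ (inducing a fixed-point-free permutation of $\Epi(\mathbb{F},A)$ via $f\mapsto\sigma\circ f$, since $\sigma\circ f=f$ forces $\sigma=\mathrm{id}$ by surjectivity of $f$), then choosing a colouring $c$ with $c(f)\ne c(\sigma\circ f)$ for all $f$ (possible with three colours, as $\langle\sigma\rangle$ acts freely) makes the two-element configuration $S=\{\pi_A,\sigma\circ\pi_A\}$ non-monochromatic on every $G$-translate, contradicting $(\star)$. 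The remaining task, $G$ extremely amenable $\Rightarrow$ $\mathcal{F}$ Ramsey, is the reverse of the second paragraph and is where the real work lies. Translating $(\star)$ through the identification $\Epi(\mathbb{F},A)=\varinjlim_n\Epi(A_n,A)$ gives the ``limit'' Ramsey statement: every finite colouring of $\Epi(\mathbb{F},A)$ admits $g\in G$ making $\Epi(B,A)\circ\pi_B\circ g$ monochromatic. I expect the main obstacle to be extracting the genuinely finite Ramsey property—the existence of a single witnessing object $C$—from this limit statement. The plan is a compactness argument: supposing the finite property fails for some $A,B,d$, pick for each $A_n$ a bad colouring of $\Epi(A_n,A)$, regard these as points of the compact space $[d]^{\Epi(\mathbb{F},A)}$ via the directed union, pass to a cluster point $c_\infty$, and check that the $g$ supplied by the limit statement for $c_\infty$ already witnesses the finite Ramsey property at a sufficiently late stage $A_n$, a contradiction. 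Care is required to align the directed-union identifications with the $G$-action so that monochromaticity at the limit descends to a fixed finite level.
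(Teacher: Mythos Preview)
The paper does not contain a proof of this theorem: it is quoted as Theorem~2.2 of \cite{bkgowers} and used as a black box in the proof of Theorem~\ref{thm6}. So there is no ``paper's own proof'' to compare your proposal against.

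That said, your outline is the standard KPT argument dualised to the projective setting, and it is essentially correct. A couple of small remarks. In the rigidity step you should note explicitly that a morphism $\sigma:A\to A$ in $\mathcal{F}$ is a surjection between finite sets, hence a bijection of finite order, which is what makes the free $\langle\sigma\rangle$-action and the three-colouring argument go through. In the compactness step for $(1)\Rightarrow$ Ramsey, the point you flag---aligning the directed-union identifications with the $G$-action so that limit monochromaticity descends to a finite level---is exactly the content of the argument: once you have $g\in G$ with $\Epi(B,A)\circ\pi_B\circ g$ $c_\infty$-monochromatic, write $\pi_B\circ g=h\circ\pi_{A_n}$ for some $n$ and $h\in\Epi(A_n,B)$; then $C=A_n$ and $g_0=h$ witness the finite Ramsey property for the $n$th bad colouring, provided $n$ is chosen so that $c_\infty$ agrees with that bad colouring on the relevant finite set $\Epi(B,A)\circ h$. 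This last proviso is where the cluster-point choice matters, and you should make that explicit.
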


The Ramsey statement involved in the category $\mathcal{K}^*$, when reformulated appropriately, ends up being a direct application of the classical Ramsey theorem.  For $k\leq m$ natural numbers, we denote by $\textrm{II}([k],[m])$ the set of all increasing injections $[k] \hookrightarrow [m]$. If $f:[m] \to [n]$ is an increasing injection, by $f \circ \textrm{II}([k],[m])$ we mean the collection of all functions of the form $f \circ g$ where $g:[k]\to [m]$ is an increasing injection; of course every such $f \circ g$ is in $\textrm{II}([k],[n])$.

\begin{thm}[Finite Ramsey Theorem]\label{thm7}
Let $k\leq m$ and $d$ be natural numbers. Then, there exists $n \in \N$, $n>m$ so that for any coloring 
$c:\textrm{II}([k],[n]) \to [d]$, there exists $g \in \textrm{II}([m],[n])$ so that $g \circ \textrm{II}([k],[m])$ is monochromatic. 
\end{thm}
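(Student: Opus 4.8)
The plan is to recognize Theorem \ref{thm7} as a reformulation of the classical finite Ramsey theorem, obtained by translating increasing injections into finite subsets. First I would set up the dictionary: an increasing injection $[k]\hookrightarrow[n]$ is completely determined by its image, a $k$-element subset of $[n]$, and conversely every $k$-element subset arises in this way from exactly one increasing injection. This gives, for every $n$, a canonical bijection $\mathrm{II}([k],[n])\leftrightarrow\binom{[n]}{k}$. Moreover, if $g\in\mathrm{II}([m],[n])$ has image $M$, then $g$ restricts to the unique order-isomorphism $[m]\to M$, so for $h\in\mathrm{II}([k],[m])$ the composite $g\circ h$ has image $g(\mathrm{im}\,h)$; as $h$ ranges over all of $\mathrm{II}([k],[m])$, the set $g(\mathrm{im}\,h)$ ranges over precisely the $k$-element subsets of $M$, each exactly once. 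Hence, under the dictionary, $g\circ\mathrm{II}([k],[m])$ corresponds exactly to $\binom{M}{k}$.

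With this translation in place, a coloring $c:\mathrm{II}([k],[n])\to[d]$ is the same datum as a $d$-coloring of $\binom{[n]}{k}$, and the desired conclusion --- that there is $g\in\mathrm{II}([m],[n])$ with $g\circ\mathrm{II}([k],[m])$ monochromatic --- becomes the assertion that there is an $m$-element set $M\subseteq[n]$ with $\binom{M}{k}$ monochromatic. So the theorem is precisely the statement that for all $k\le m$ and all $d$ there exists $n$ with the partition relation $n\to(m)^k_d$, which is the classical finite Ramsey theorem. I would simply invoke it; if a self-contained argument is wanted, one can derive it either by Ramsey's original induction on $k$ (successively thinning $[n]$ to a nested family of sets that are ``homogeneous in the first coordinate''), or from the infinite Ramsey theorem by a compactness/K\"onig's lemma argument of the same flavor as the one used in the proof of Theorem \ref{thm2}.

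The main point to be careful about is not a mathematical obstacle but the bookkeeping in the dictionary above: one must check that post-composition with a fixed $g$ corresponds \emph{exactly} to restricting attention to those $k$-subsets contained in $\mathrm{im}\,g$, with no repetitions and no omissions, so that monochromaticity of $g\circ\mathrm{II}([k],[m])$ is equivalent to monochromaticity of $\binom{\mathrm{im}\,g}{k}$. Once that correspondence is pinned down, there is no remaining content beyond citing Ramsey's theorem.
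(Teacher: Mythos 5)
Your proposal is correct and matches the paper's treatment: the paper states Theorem \ref{thm7} as the classical Finite Ramsey Theorem without proof, implicitly relying on exactly the dictionary you describe between increasing injections and $k$-element subsets, under which the statement is the partition relation $n \to (m)^k_d$. Your careful verification that post-composition with $g$ corresponds bijectively to the $k$-subsets of $\mathrm{im}\,g$ is the only content beyond citing the classical result, and it is right.
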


We denote by $R(k,m;d)$ the least such $n$ satisfying Theorem \ref{thm7} for $k,m,d$ as above.

\begin{proof}[Proof of Theorem \ref{thm6}]
Since by Proposition \ref{prop3} $\aut (\bK^*)$ is a dense subgroup of $\Homeo^1(K)$, point (i) of the theorem implies point (ii).  

By Theorem \ref{thm4.75}, to show that $\aut(\bK^*)$ is extremely amenable, we need to prove that $\mathcal{K}^*$ is a Ramsey class. So let $A=(\fl{k},p), B=(\fl{m},q) \in \mathcal{K}^*$ and let $d \in \N$. The statement of the Ramsey property for $A$ and $B$ is vacuously true if $\Epi(B,A)$ is empty so we may assume that $\Epi(B,A) \neq \emptyset$ and thus $\frac{q}{p} \in \N$. Now by Theorem \ref{thm7}, let $n=R\left(\frac{q}{p}\cdot (k-1),m;d\right)$ and we claim that $C=(\fl{n},q)$ witness the Ramsey property for $A$ and $B$ in $\mathcal{K}^*$; i.e, for any $c:\Epi(C,A) \to [d]$, there is some $g \in \Epi(C,B)$ so that $\Epi(B,A) \circ g$ is monochromatic.

Given $f \in \Epi(C,A)$ we associate to $f$ the increasing injection $\overline{f}:\left[\frac{q}{p}\cdot (k-1)\right] \to [n]$ defined as follows: 
\[\overline{f}(i)=\min \bigg\{j \in [n] \ : f(j) \neq f(j-1) \textrm{ and } \norm{\{l<j \ : \ f(l) \neq f(l-1)\}}=i \bigg\}\]

The key observation is that for a morphism $f: (C,q) \to (A,p)$ of degree $\frac{q}{p}$, there are exactly $\frac{q}{p}(k-1)$ distinct vertices, $v_1,v_2,\ldots, v_{\frac{q}{p}(k-1)}$ in $C$ so that $f(v_i) \neq f(v_i-1)$ and further the values of these $\frac{q}{p} (k-1)$ distinct vertices fully determine the morphism $f$. This implies that the set displayed above is nonempty for each $i<\frac{q}{p}(k-1)$ and thus that $\overline{f}(i)$ is defined for each $i \in \left[\frac{q}{p}(k-1)\right]$. It is clear from the definition of $\overline{f}$ that $\overline{f}(i+1)>\overline{f}(i)$ for each $i$ and so $f \in II\left(\left[\frac{q}{p}\cdot (k-1)\right] ,[n]\right)$. The observation at the beginning of the paragraph implies that map $f \to \overline{f}$ is injective.

Let $c:\Epi(C,A) \to [d]$ be a coloring. Define  \[c':\textrm{II}\left(\left[\frac{q}{p}\cdot (k-1)\right],[n]\right)\to [d]\]
by $c'(\overline{f})=c(f)$ (and $c'$ is arbitrary for elements of $\textrm{II}\left(\left[\frac{q}{p}\cdot (k-1)+1\right],[n]\right)$ not of the form $\overline{f}$). Let $p\in \textrm{II}([m],[n])$ be so that 
\[p \circ \textrm{II}\left(\left[\frac{q}{p}\cdot (k-1)\right],[m]\right)\] 
is $c'$-monochromatic. 

Let $g \in \Epi(C,B)$ be the unique degree one morphism defined by the condition:
\[\min \bigg\{i \in [n] \ : g(i)=j\bigg\}=p(j) \textrm{ for }j=1,\ldots,m-1\]

Now we are done so long as for each $h \in \Epi(B,A)$, 
\begin{equation}\label{eqn1}
    \overline{h \circ g} \in p\circ \textrm{II}\left(\left[\frac{q}{p}\cdot (k-1)\right],[m]\right)
\end{equation}

Take $h \in \Epi(B,A)$, notice that checking \ref{eqn1} for $\overline{h\circ g}$ is equivalent to showing that $\overline{h \circ g}(i) \in \textrm{codom}(p)$ for each $i \in \left[\frac{q}{p}\cdot (k-1)\right]$. To see this, compute that for any $i \in \left[\frac{q}{p}\cdot (k-1)\right]$:

\[
\begin{split}
    \overline{h \circ g} (i) & = \min \bigg\{j \in [n] \ : h \circ g(j) \neq h\circ g(j-1) \textrm{ and }\\
    & \qquad \qquad \norm{\{l<j \ : \ h\circ g(l) \neq h\circ g(l-1)\}}=i \bigg\} \\
    &= \min \bigg\{j \in [n] \ : \ g(j)=\min \big\{k : h(k) \neq h(k-1) \textrm{ and }\\
    & \qquad \qquad \norm{l<k \ : \ h(l) \neq h(l-1)}=i\big\} \bigg\}\\ 
    &= p\bigg(\min \big\{k : h(k) \neq h(k-1) \textrm{ and } \norm{l<k \ : \ h(l) \neq h(l-1)}=i\big\}\bigg)\\
\end{split}
\]
where we know that $k$ exists so that $h(k) \neq h(k-1) \textrm{ and } \norm{l<k \ : \ h(l) \neq h(l-1)}=i$  because $\deg (h) =\frac{q}{p}$.
\end{proof}

\subsection{The proof of Theorem \ref{thm5.5}}

We can now prove the main theorem.

\begin{proof}[Proof of Theorem \ref{thm5.5}]
From the diagram in Figure \ref{fig1}, we have a short exact sequence of Polish groups (every arrow below is continuous):
\[1 \hookrightarrow \Homeo^1(K) \hookrightarrow \Homeo(K) \overset{\deg}{\twoheadrightarrow}\Q^\times \twoheadrightarrow 1\]
By  \cite{jahelzucker} (p.9), to show that $\Homeo(K) \simeq \Homeo^1(K) \rtimes \Q^\times$ as a topological group, we need only show that the short exact sequence above \emph{splits continuously}, i.e., that there is a continuous group homomorphism $\Q^\times \to \Homeo(K)$ which is a right inverse of the degree map. It suffices to show that for each prime $p$ there exists a homeomorphism $f_p$ of degree $p$ such that for any $p \neq q$, $f_p$ and $f_q$ commute. Given this, we can extend the mapping $p \mapsto f_p$ to a group homomorphism $\Q^\times \to \Homeo(K)$ by first mapping $\frac{1}{p}\mapsto f_p^{-1}$ for each reciprocal of prime and then extending using the fact that elements of $\Q^\times$ can be uniquely factored into primes and reciprocals of primes.

Let $K=\varprojlim (I_n, T_n^{n+1})$ be the universal Knaster continuum and for each $n$, $\pi_n:K \to I_n$ the projection map onto the $n$th coordinate. We use maps from what is called in \cite{eberhart} the \emph{semigroup of standard induced maps} (see second paragraph on p. 129 of \cite{eberhart}). For $d >0$, let $g_d:I \to I$ be the standard degree-$d$ tent-map given by
\[g_d(x)=
\begin{cases}
dx & \textrm{ if }x \in \left[\frac{m}{d}, \frac{m+1}{d}\right] \textrm{ and }m \textrm{ is even}\\
1+m-dx & \textrm{ if }x \in \left[\frac{m}{d}, \frac{m+1}{d}\right] \textrm{ and }m \textrm{ is odd}\\
\end{cases}\]
It is easy to check that for $c,d>0$ the maps $g_c$ and $g_d$ commute. For each prime $p$, choose $f_p$ to be the homeomorphism of $K$ such that
\[\pi_1 \circ f_p =g_p \circ \pi_1\]
There is a unique such homeomorphism by Lemma 3.3  and Theorem 3.10 of \cite{eberhart}. Now let $p,q$ be primes. Observe that
\[\pi_1 \circ f_q \circ f_p=g_q \circ \pi_1 \circ f_p=g_q \circ g_p \circ \pi_1 =g_p \circ g_q \circ \pi_1\]
and similarly
\[\pi_1 \circ f_p \circ f_q=g_p \circ g_q \circ \pi_1 \]
By uniqueness (Lemma 3.3 of \cite{eberhart}), $f_q \circ f_p=f_p \circ f_q$. So we get that $\Homeo(K) \simeq \Homeo^1(K) \rtimes \Q^\times$. As noted before $\Q^\times$ is free abelian and generated by the set of primes. By Theorem \ref{thm6}, $\Homeo^1(K)$ is extremely amenable.
\end{proof}

\subsection{Dynamical consequences}

We return to the diagram in Figure \ref{fig1}. The kernel of the map 
\[\deg :\Homeo(K) \to \Q^\times\]
is extremely amenable by Theorem \ref{thm6}. The following Proposition is a general and, surely, folklore fact which applies to our situation. 

\begin{prop}\label{prop4}
Let $f:G \to H$ be a continuous, surjective group homomorphism between Polish groups and suppose that $K :=\ker (f)$ is extremely amenable. Then, the continuous surjective map $\mathcal{M}(G) \to \mathcal{M}(H)$ induced by $f$ is a $H$-flow isomorphism.
\end{prop}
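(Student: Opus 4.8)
The plan is to exploit the extreme amenability of $\ker(f)$ to show that it acts trivially on $\mathcal{M}(G)$, reinterpret $\mathcal{M}(G)$ as a minimal $H$-flow, and then identify it with $\mathcal{M}(H)$ using the coalescence of the universal minimal flow. The conceptual heart is the first step.

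First I would record the key point: $K:=\ker(f)$ acts trivially on $\mathcal{M}(G)$. The set $\textrm{Fix}(K)=\{x\in\mathcal{M}(G) : kx=x \textrm{ for all } k\in K\}$ is closed, being an intersection of closed sets in the Hausdorff space $\mathcal{M}(G)$, and nonempty, since $K$ is extremely amenable and acts continuously (by restriction of the $G$-action) on the compact Hausdorff space $\mathcal{M}(G)$. It is also $G$-invariant: if $x\in\textrm{Fix}(K)$ and $g\in G$, then for every $k\in K$ we have $k(gx)=g(g^{-1}kg)x=gx$, because $K\trianglelefteq G$ forces $g^{-1}kg\in K$. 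Minimality of the $G$-flow $\mathcal{M}(G)$ then gives $\textrm{Fix}(K)=\mathcal{M}(G)$, i.e. $K$ acts trivially.

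Next, since $f$ is a continuous surjective homomorphism of Polish groups it is open (the open mapping theorem for Polish groups), so $f\times\textrm{id}\colon G\times\mathcal{M}(G)\to H\times\mathcal{M}(G)$ is an open, hence quotient, surjection; combined with the previous paragraph the $G$-action on $\mathcal{M}(G)$ descends to a continuous action of $H\cong G/K$, making $\mathcal{M}(G)$ a minimal $H$-flow (the $H$-orbits equal the $G$-orbits since $f$ is onto). As $K$ also acts trivially on $\mathcal{M}(H)$ — there the $G$-action is $g\cdot y=f(g)y$ — the canonical $G$-equivariant continuous surjection $\pi\colon\mathcal{M}(G)\to\mathcal{M}(H)$ from Figure \ref{fig1} is in fact $H$-equivariant. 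By universality of $\mathcal{M}(H)$ applied to the minimal $H$-flow $\mathcal{M}(G)$, there is an $H$-equivariant continuous surjection $\sigma\colon\mathcal{M}(H)\to\mathcal{M}(G)$. Then $\sigma\circ\pi$ is an $H$-equivariant, hence (via $f$) $G$-equivariant, continuous self-map of $\mathcal{M}(G)$, i.e. an endomorphism of the universal minimal $G$-flow; since the universal minimal flow is coalescent, $\sigma\circ\pi$ is an automorphism, so $\pi$ is injective. Being already known to be surjective, $\pi$ is then a continuous bijection of compact Hausdorff spaces, hence a homeomorphism, and being $H$-equivariant it is an isomorphism of $H$-flows.

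The main obstacle is really just Step 1: once one sees that extreme amenability together with normality of $\ker(f)$ forces $\ker(f)$ to act trivially on $\mathcal{M}(G)$, the remainder is formal bookkeeping. The only inputs used beyond the material quoted above are the coalescence of the universal minimal flow (every flow endomorphism of $\mathcal{M}(G)$ is an automorphism; see e.g.\ Auslander's book on minimal flows) and the fact that a continuous surjective homomorphism of Polish groups is open.
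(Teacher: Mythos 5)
Your proof is correct and follows essentially the same route as the paper's: extreme amenability together with normality of $K$ and minimality of $\mathcal{M}(G)$ force $K$ to act trivially, the $G$-action then descends to a minimal action of $H\simeq G/K$, and the induced map $\mathcal{M}(G)\to\mathcal{M}(H)$ is an isomorphism by coalescence/uniqueness of the universal minimal flow (the paper cites Uspenskij's Proposition 3.3 for exactly the step you carry out with $\sigma$). The only cosmetic difference is that you establish triviality of the $K$-action via the closed $G$-invariant set $\mathrm{Fix}(K)$, whereas the paper verifies the coset-independence identity on the dense orbit of a $K$-fixed point and extends by continuity.
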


\begin{proof}
Consider the action of $G$ on $\mathcal{M}(G)$; there is some $x_0 \in \mathcal{M}(G)$ fixed by $K$. We claim there is a well-defined action of $G / K$ on $\mathcal{M}(G)$ given by:
\begin{equation}\label{eqn2}
(gK) \cdot x=g\cdot x
\end{equation}
This follows from the fact that 
\begin{equation}\label{eqn3}
g_1K=g_2K \implies g_1 \cdot x=g_2 \cdot x
\end{equation}
for all $g_1,g_2 \in G$ and $x \in \mathcal{M}(G)$. 
Equation \ref{eqn3} holds for $x$ in the orbit of $x_0$ since for any $g \in G$, if $k \in K$ is such that $g_1k=g_2k$ then:
\begin{align*}
    g_2(gx_0) &= g_2kg(g^{-1}k^{-1}g)x_0\\
    &=g_2kg(x_0)\\
    &=g_1(gx_0)\\
\end{align*}
where the second equality uses that $K$ is normal. Then since the $G$-orbit of $x_0$ is dense in $\mathcal{M}(G)$, Equation \ref{eqn3} holds everywhere by continuity of the action. The action defined in \ref{eqn2} is easily seen to be continuous and $G/K$-equivariant and it is minimal by \ref{eqn3}. Now uniqueness of the universal minimal flow implies that the action of $H \simeq G/K$ on $\mathcal{M}(G)$ is the universal minimal flow of $H$. As the map $\mathcal{M}(G) \to \mathcal{M}(H)$ induced by $f$ is an $H$-flow morphism, it is in fact an $H$-flow isomorphism (see \cite{uspchains}, Proposition 3.3).
\end{proof}

It follows from Proposition \ref{prop4}, that the bottom two arrows on the diagram in Figure \ref{fig1} are $\Q^\times$-flow isomorphisms. In particular, we have that:

\begin{cor}\label{cor4}
The flows $\mathcal{M}(\aut(\bK))$ and $\mathcal{M}(\Homeo(K))$ are homeomorphic to $\mathcal{M}(\Q^\times)$. 
\end{cor}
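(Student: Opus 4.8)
The plan is to recognize that Corollary~\ref{cor4} follows immediately from Proposition~\ref{prop4} applied twice: once to the degree homomorphism on $\Homeo(K)$ and once to the degree homomorphism on $\aut(\bK)$. So the entire proof consists of checking that the hypotheses of Proposition~\ref{prop4} are met in each case.

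First I would handle $\Homeo(K)$. The map $\deg\colon\Homeo(K)\to\Q^\times$ is a group homomorphism (Lemmas 11 and 12 of \cite{debski}), it is continuous when $\Q^\times$ carries the discrete topology (Lemma 9 of \cite{debski}, as recorded in Section~\ref{sec6degree}), and it is surjective by Lemmas~\ref{lem4} and~\ref{lem5}. Both $\Homeo(K)$ and $\Q^\times$ are Polish --- the former as the homeomorphism group of a compact metric space with the sup metric, the latter since it is countable and discrete. Finally the kernel of $\deg$ is $\Homeo^1(K)$, which is extremely amenable by Theorem~\ref{thm6}(ii). Thus Proposition~\ref{prop4} gives a $\Q^\times$-flow isomorphism $\mathcal{M}(\Homeo(K))\to\mathcal{M}(\Q^\times)$.

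Next I would treat $\aut(\bK)$. Here $\deg\colon\aut(\bK)\to\Q^\times$ is continuous because by Lemma~\ref{lem4} it agrees with $\deg\circ\Phi$, a composition of continuous maps, and it is surjective by Lemma~\ref{lem5}. The group $\aut(\bK)$ is Polish, being the automorphism group of a projective Fraissé limit topologized as a closed subgroup of $\Homeo(\bK)$. The one point requiring a moment's attention is the identification of the kernel: the proof of Proposition~\ref{prop3} shows that $\{g\in\aut(\bK):\deg(g)=1\}=\aut(\bK^*)$, which is extremely amenable by Theorem~\ref{thm6}(i). So Proposition~\ref{prop4} again applies and yields a $\Q^\times$-flow isomorphism $\mathcal{M}(\aut(\bK))\to\mathcal{M}(\Q^\times)$.

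Combining the two isomorphisms, $\mathcal{M}(\aut(\bK))$ and $\mathcal{M}(\Homeo(K))$ are each isomorphic as $\Q^\times$-flows --- in particular homeomorphic --- to $\mathcal{M}(\Q^\times)$. There is no genuine obstacle left in this argument: all of the substance has been front-loaded into the extreme amenability statement Theorem~\ref{thm6} and into the general fact Proposition~\ref{prop4}. The step most worth double-checking is that the kernel of the degree map on $\aut(\bK)$ equals $\aut(\bK^*)$ rather than merely containing it, but this is exactly what the proof of Proposition~\ref{prop3} establishes.
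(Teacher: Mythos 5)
Your proposal is correct and follows essentially the same route as the paper: the paper likewise deduces Corollary \ref{cor4} by applying Proposition \ref{prop4} to the two degree homomorphisms, with kernels $\Homeo^1(K)$ and $\aut(\bK^*)$ handled by Theorem \ref{thm6}. Your explicit verification that $\ker(\deg\restriction\aut(\bK))$ equals $\aut(\bK^*)$ via the proof of Proposition \ref{prop3} is exactly the point the paper leaves implicit.
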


\subsection{$\Homeo(K)$ is not a product}

We show now that $\Homeo(K)$ is not a direct product of $\Homeo^1(K)$ and $\Q^\times$.

\begin{lem}\label{lem6}
Every non-trivial conjugacy class in $\Homeo(K)$ is uncountable.
\end{lem}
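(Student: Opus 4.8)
The plan is to deduce the Lemma from the Baire category theorem together with the fact that $\Homeo(K)$ admits arbitrarily small homeomorphisms supported on small open sets. Fix $g \in \Homeo(K)$ with $g \neq \textrm{id}$ and let $C(g) = \{h \in \Homeo(K) : hg = gh\}$; this is a closed subgroup, being the preimage of $\{\textrm{id}\}$ under the continuous map $h \mapsto hgh^{-1}g^{-1}$. The fibres of the orbit map $h \mapsto hgh^{-1}$ are exactly the left cosets of $C(g)$, so the conjugacy class of $g$ is in bijection with $\Homeo(K)/C(g)$. Thus it suffices to prove that $[\Homeo(K):C(g)] > \aleph_0$: if this index were countable, then $\Homeo(K)$ would be a countable union of closed cosets of $C(g)$ and, since $\Homeo(K)$ is Polish (hence Baire), some coset---and therefore $C(g)$ itself---would have nonempty interior, making $C(g)$ an open subgroup. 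So I will instead show that $C(g)$ has empty interior, i.e. that every $\epsilon$-ball around $\textrm{id}$ contains a homeomorphism not commuting with $g$.

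Fix $\epsilon > 0$. Since $g \neq \textrm{id}$ and the complement of the (at most two) endpoints of $K$ is dense, $g$ moves some non-endpoint $p$. Using that $K$ is Hausdorff and $g$ continuous, choose an open neighbourhood $W_0$ of $p$ with $g(W_0) \cap W_0 = \emptyset$. By the standard local product structure of Knaster continua---each non-endpoint has a neighbourhood basis of open sets homeomorphic to $C \times (0,1)$, with $C$ the Cantor set---pick an open neighbourhood $W \subseteq W_0$ of $p$ with $\textrm{diam}(W) < \epsilon$ and a homeomorphism $\vartheta : W \to C \times (0,1)$. Let $\psi$ be a homeomorphism of $(0,1)$ which is the identity off $[\tfrac13,\tfrac23]$ and is not the identity, put $h_0 := \vartheta^{-1} \circ (\textrm{id}_C \times \psi) \circ \vartheta$, a homeomorphism of $W$, and let $h : K \to K$ agree with $h_0$ on $W$ and with $\textrm{id}$ on $K \setminus W$. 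Since $h$ restricts to the identity on the open set $K \setminus \vartheta^{-1}(C \times [\tfrac13,\tfrac23])$, which is a neighbourhood of every point of $\overline W \setminus W$, the map $h$ is continuous; it is a bijection of the compact Hausdorff space $K$, hence $h \in \Homeo(K)$. By construction $h \neq \textrm{id}$, $h$ fixes $K \setminus W$ pointwise, and $\dsup(h,\textrm{id}) \le \textrm{diam}(W) < \epsilon$.

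It remains to see $h g \neq g h$. Choose $q \in W$ with $h(q) \neq q$. Then $g(q) \in g(W) \subseteq g(W_0) \subseteq K \setminus W_0 \subseteq K \setminus W$, so $h(g(q)) = g(q)$; on the other hand $g(h(q)) \neq g(q)$ because $g$ is injective and $h(q) \neq q$. Hence $(hg)(q) \neq (gh)(q)$, so $h \notin C(g)$, and $C(g)$ has empty interior. The one nontrivial input is the local product description of Knaster continua used to produce the small box $W$; this is classical, and can also be read off from the inverse-limit-of-tent-maps presentation or from the projective Fraïssé limit $\bK$ --- where $h$ can instead be obtained, via $\Phi$, from an automorphism of $\bK$ that fixes a projection $\pi_n$ but acts nontrivially on a single fibre. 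I expect this geometric step to be the main point to pin down; the rest is the routine Baire-category packaging above.
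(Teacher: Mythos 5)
Your argument is correct, but it takes a genuinely different route from the paper's. The paper proves uncountability directly: it fixes $\mathbf{x}\neq\mathbf{y}$ with $f(\mathbf{x})=\mathbf{y}$ and, for each $b$ in an interval of reals, uses a \emph{vertically induced} homeomorphism $\tilde g_b$ (in the sense of Eberhart--Fugate--Schumann) that fixes $\mathbf{x}$ and carries a point $\mathbf{z}_b$ with $n$th coordinate $b$ to $\mathbf{y}$; then $\tilde g_b^{-1}\circ f\circ \tilde g_b$ sends $\mathbf{x}$ to $\mathbf{z}_b$, and these conjugates are pairwise distinct. You instead package the problem as a Baire-category statement about the centralizer $C(g)$: it is closed, countable index would make it open, and openness fails because arbitrarily small homeomorphisms supported in a small box do not commute with $g$. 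All the soft steps (cosets closed, Baire, ``nonempty interior implies open,'' the displacement argument via $g(W_0)\cap W_0=\emptyset$) are fine. Your approach is more robust --- it works for any Polish homeomorphism group admitting nontrivial homeomorphisms of arbitrarily small support off a finite exceptional set, and it gives the slightly stronger conclusion that every non-identity element has non-open centralizer --- at the cost of the one geometric input you flag: the local product structure $C\times(0,1)$ at non-endpoints. That input is genuine and does hold here, so you should pin it down rather than leave it as a remark: writing $K=\varprojlim(I,T_n^{n+1})$ with standard tent maps, any point other than the endpoint(s) has a coordinate $x_n\in(0,1)$, and for an interval $U\ni x_n$ with $\overline U\subseteq(0,1)$ each $(T_n^m)^{-1}(U)$ is a disjoint union of intervals mapped homeomorphically onto $U$ (the critical values of compositions of standard tent maps lie in $\{0,1\}$), so $\pi_n^{-1}(U)\cong\bigl(\varprojlim D_m\bigr)\times U$ with $D_m$ finite discrete, and the unbounded degrees of the universal Knaster continuum make $\varprojlim D_m$ a Cantor set. (In fact any compact zero-dimensional fibre would do for your purposes.) Alternatively, you could sidestep the local structure entirely by building your small non-commuting perturbation from the same vertically induced homeomorphisms the paper uses.
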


\begin{proof}
Let $f \neq \textrm{id}$ be in $\Homeo(K)$. Let $K=\varprojlim(I_n, t_n^{n+1})$ where each $I_n=[0,1]$. Let $\mathbf{x}=(x_m)_{m\in\N}$ and $\mathbf{y}=(y_m)_{m\in\N}$ with $\mathbf{x} \neq \mathbf{y}$ and $f(\mathbf{x})=\mathbf{y}$. Let $n \in \N$ such that $x_n \neq y_n$. We will assume that $x_n <y_n$ (the other case is analogous). For each $b \in (x_n,1)$ let $g_b \in \Homeo_+[0,1]$ be such that $g_b(x_n)=x_n$ and $g_b(b)=y_n$. Each $g_b$ as a map from $I_n$ to $I_n$ induces a unique open continuous map $K \to K$ (the other coordinate maps are fully determined once $g_b:I_n\to I_n$ is set, see \cite{eberhart}, Lemma 3.3) which we denote by $\tilde{g_b}$. Each $\tilde{g_b}$ is a homeomorphism (see \cite{eberhart} top of p.129, where such maps are called \emph{vertically induced homeomorphisms}). One may check that $\tilde{g_b}(\mathbf{x})=\mathbf{x}$ and that there exists $\mathbf{z_b}=((z_b)_m)_{m\in\N} \in K$ with $(z_b)_n=b$ and $\tilde{g_b}(\mathbf{z_b})=\mathbf{y}$. So:
\[\tilde{g_b}^{-1} \circ f\circ \tilde{g_b}(\mathbf{x})=\mathbf{z_b}\]
In particular for $b \neq c$, since $\mathbf{z_b}\neq \mathbf{z_c}$ the equation above implies that $\tilde{g_b}^{-1}\circ f\circ \tilde{g_b} \neq \tilde{g_c}^{-1}\circ f\circ \tilde{g_c}$ and thus the conjugacy class of $f$ is uncountable. 
\end{proof}

\begin{prop}\label{prop5}
The group $\Homeo(K)$ is not isomorphic to the direct product $\Homeo^1 \times \Q^\times$.
\end{prop}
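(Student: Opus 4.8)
The plan is to derive a contradiction directly from Lemma \ref{lem6}, which says in effect that $\Homeo(K)$ has trivial center. Suppose toward a contradiction that there is a group isomorphism $\Psi : \Homeo^1(K) \times \Q^\times \to \Homeo(K)$ (topological, or even merely abstract — the argument will not use the topology). The key observation is that $\Q^\times$ is abelian, so in the direct product $\Homeo^1(K) \times \Q^\times$ the subgroup $\{1\} \times \Q^\times$ is central: an element $(1,q)$ commutes with every $(g,r)$, since the first coordinates multiply trivially and $\Q^\times$ is commutative.

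Next I would pick any $q \in \Q^\times$ with $q \neq 1$ (such $q$ exists since $\Q^\times$ is non-trivial) and set $f = \Psi(1,q)$. Then $f$ is a non-trivial element of $\Homeo(K)$, and since $\Psi$ carries the central subgroup $\{1\}\times\Q^\times$ into the center of $\Homeo(K)$, the element $f$ is central in $\Homeo(K)$. Hence its conjugacy class is the singleton $\{f\}$, which is a countable non-trivial conjugacy class — contradicting Lemma \ref{lem6}. This contradiction shows no such $\Psi$ exists, proving the proposition.

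I do not expect a genuine obstacle: the whole proof reduces to the elementary fact that the second factor of a direct product with an abelian group is central, together with the already-established triviality of the center of $\Homeo(K)$. The only points worth stating carefully are that Lemma \ref{lem6} is exactly the input needed (a non-trivial central element would have a singleton, hence countable, conjugacy class), and that since the argument uses no topology it in fact rules out an \emph{abstract} group isomorphism $\Homeo(K) \cong \Homeo^1(K) \times \Q^\times$, which is a priori stronger than ruling out a topological one.
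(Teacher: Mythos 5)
Your proof is correct and follows essentially the same route as the paper: both identify the subgroup $\{1\}\times\Q^\times$ of the putative direct product and rule it out via Lemma \ref{lem6}, the paper by noting it would be a countably infinite normal subgroup (hence a countable union of conjugacy classes containing a non-trivial one), you by noting it is central so a non-trivial element would have a singleton conjugacy class. Your closing remark that the argument is purely algebraic and therefore excludes even an abstract group isomorphism is accurate and applies equally to the paper's version.
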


\begin{proof}
If $\Homeo(K) \simeq \Homeo^1(K) \times \Q^\times$, then $\Homeo(K)$ would contain a countably infinite normal subgroup. This is clearly impossibly by Lemma \ref{lem6}. 
\end{proof}

\section{Remarks on Ramsey degree}
An object $A$ in a projective Fraissé class $\mathcal{F}$ has \emph{infinite Ramsey degree} if for any $n \in \N$, there exists $B \in \mathcal{F}$ so that for any $C \in \mathcal{F}$ with $\hom(C,B) \neq \emptyset$ we have that: there exists a coloring $c:\hom(C,A) \to [n]$ such that for each $f \in \hom(C,B)$, $\left(\hom(B,A) \circ f \right)\cap c^{-1}(i) \neq \emptyset$ for each $i \in [n]$. Recall from Section \ref{sec6degree} that the group $\aut(\bK)$ has non-metrizable universal minimal flow. It is a theorem of Zucker that if $\mathcal{F}$ is a Fraissé class of rigid structures, then $\aut (\mathbb{F})$ has non-metrizable universal minimal flow if and only if $\mathcal{F}$ contains an object of infinite Ramsey degree (see \cite{zucker}, and for a proof of the dual of Zucker's theorem see \cite{dragan}). So we know abstractly that $\mathcal{K}$ must contain an object of infinite Ramsey degree. It is perhaps worth mentioning that one can prove this fact directly:

\begin{prop}
There is an object in $\mathcal{K}$ with infinite Ramsey degree.
\end{prop}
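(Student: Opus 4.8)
The plan is to prove that the two-vertex object $\fl{2}$ has infinite Ramsey degree. The guiding principle is that, for morphisms in $\mathcal{K}$, the relevant invariant is the \emph{degree}: precomposition multiplies degrees, so a colouring of $\hom(C,\fl{2})$ by a suitably chosen additive function of the degree becomes ``unavoidable'' — no matter which $f\colon C\to B$ one composes with, the morphisms in $\hom(B,\fl{2})\circ f$ will realize every colour.

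Fix $n\in\N$; I would take the witnessing object to be $B=\fl{2^{n-1}+1}$. The first step is to observe that for each $k\in\{0,1,\dots,n-1\}$ there is a morphism $h_k\colon B\to\fl{2}$ with $\deg(h_k)=2^{k}$: let $\str(h_k)$ alternate $0,1,0,1,\dots$ on coordinates $0,\dots,2^{k}$ and be constant afterward. This string has exactly $2^{k}$ sign changes, so $h_k$ is a valid $\mathcal{K}$-morphism of degree $2^{k}$ (it is surjective on vertices and on the unique non-loop edge, preserves the zero-vertex, and has $2^{k}$ monotone pieces as required by Definition \ref{defn1}). This is the only place the size $2^{n-1}+1$ enters: one just needs room for $2^{n-1}$ sign changes.

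The second step: given any $C\in\mathcal{K}$ with $\hom(C,B)\neq\emptyset$, colour $\hom(C,\fl{2})$ by
\[
c(g)=v_{2}\bigl(\deg(g)\bigr)\bmod n,
\]
where $v_{2}$ is the $2$-adic valuation (well defined since $\deg(g)\geq 1$); note $c$ does not depend on $C$. For any $f\in\hom(C,B)$, put $d=\deg(f)$. By multiplicativity of degree, $\deg(h_k\circ f)=2^{k}d$, hence $c(h_k\circ f)=\bigl(k+v_{2}(d)\bigr)\bmod n$. As $k$ ranges over $\{0,\dots,n-1\}$ these values range over all of $[n]$, so $\{h_0\circ f,\dots,h_{n-1}\circ f\}\subseteq\hom(B,\fl{2})\circ f$ meets $c^{-1}(i)$ for every $i\in[n]$. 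Since $n$ was arbitrary, $\fl{2}$ has infinite Ramsey degree.

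I do not expect a genuine obstacle here; the one point worth isolating is the choice of invariant. A first guess — take $B$ admitting morphisms to $\fl{2}$ of degrees $1,2,\dots,n$ and colour by $\deg(g)\bmod n$ — fails, because $\{d,2d,\dots,nd\}$ need not be a complete residue system modulo $n$ when $\gcd(d,n)>1$. Using powers of two together with the $2$-adic valuation repairs this: $v_{2}$ is additive, $v_{2}(2^{k}d)=k+v_{2}(d)$, with no coprimality hypothesis, which is exactly what forces the colouring to be unavoidable (one could equally replace $v_{2}(\deg g)$ by $\lfloor\log_{2}\deg(g)\rfloor$). The remaining verifications — that the $h_k$ are genuine $\mathcal{K}$-morphisms of the stated degrees, and that $\deg$ is multiplicative on compositions of $\mathcal{K}$-morphisms, as already used implicitly in Section \ref{sec6degree} — are routine.
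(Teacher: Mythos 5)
Your proposal is correct and follows essentially the same route as the paper: the paper also takes $A$ to be the two-vertex graph, takes $B$ large enough to admit morphisms onto $A$ of degrees $2^0,\dots,2^{n-1}$, and colours $\hom(C,A)$ by the $2$-adic valuation of the degree modulo $n$, using multiplicativity of degree under composition. Your write-up merely makes explicit the morphisms $h_k$ that the paper leaves as "easy to check."
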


\begin{proof}
Let $A$ be a pointed linear graph with two vertices. Fix $n>1$. Let $B$ be a pointed linear graph with $2^{n+1}$ vertices. We claim that for any $C$ with $\hom(C,B)\neq \emptyset$ there is a $n$-coloring of $\Epi(C,A)$ so that for any $f \in \Epi (C,B)$, the set $\Epi(B,A) \circ f$ has morphisms colored by every one of the $n$ colors. 

For a natural number $k$, denote by $\rho(k)$ the largest natural number such that $2^{\rho(k)}\vert k$. Let $C$ be as above and then color $\Epi(C,A)$ as follows:

\[c(f) = \rho(\textrm{deg}(f)) \mod n\]

Since $\textrm{deg}(f\circ g)=\textrm{deg}(f)\textrm{deg}(g)$, it is easy to check that for any $f \in \Epi (C,B)$, 
\[\norm{c(\Epi(B,A) \circ f)} = n.\]
\end{proof}

We note that just the fact that $\aut(\bK)$ has non-metrizable universal minimal flow and is dense in $\Homeo (K)$ is not enough to conclude that $\Homeo(K)$ has non-metrizable universal minimal flow. For example, any countable dense subgroup of $\Homeo_+[0,1]$ (for instance, the group of all piece-wise linear homeomorphisms with finitely many pieces, breakpoints at rationals, and taking only rational values at breakpoints) has non-metrizable universal minimal flow whereas $\Homeo_+[0,1]$ is extremely amenable (Pestov, \cite{pestov}, Theorem 6.2).

\end{document}